\newcommand{\twvec}[1]{{\def\arraystretch{0.85}\Bigl(\hskip1pt%
        \begin{matrix}#1\end{matrix}\hskip1pt\Bigr)}}
\newtheorem{theorem}{Theorem}[section]
\newtheorem{lemma}[theorem]{Lemma}
\newtheorem{proposition}[theorem]{Proposition}
\newtheorem{problem}{Problem}
\newtheorem{conjecture}{Conjecture}
\def\bar{\overline}
\def\hat{\widehat}
\def\mod{{\; \mathrm{mod} \;}}
\def\I{{\mathcal I}}
\def\O{{\mathcal O}}
\def\R{{\mathbb R}}
\def\N{{\mathbb N}}
\def\chif{{k}}
\def\eps{{\varepsilon}}
\def\Bigcup{\displaystyle\bigcup\limits}
\def\Gceiti{the project P202/12/G061 CE-ITI}
\begin{document}
\title{Extensions of Fractional Precolorings\\[1mm]
  show Discontinuous Behavior\,\thanks{\,Research for this paper was
    started during visits of JvdH, DK, MK and JV to LIAFA\@. The authors
    like to thank the members of LIAFA for their hospitality. Visits of MK
    and JV to LIAFA and visits of J-SS to Charles University were supported
    by the PHC Barrande 24444 XD.}}
  \date{}
  \author{Jan van den Heuvel\,\thanks{
    \,Department of Mathematics, London School of Economics, Houghton
    Street, London WC2A 2AE, UK\@. E-mail:
    \texttt{j.van-den-heuvel@lse.ac.uk}.
  }
  \and Daniel Kr\'al'\,\thanks{
    \,Mathematics Institute, DIMAP and Department of Computer Science,
    University of Warwick, Coventry CV4 7AL, UK\@. Previous affiliation:
    Computer Science Institute, Faculty of Mathematics and Physics, Charles
    University, Prague, Czech Republic, where the author was supported by
    \Gceiti\@. E-mail: \texttt{D.Kral@warwick.ac.uk}.
 }
 \and Martin Kupec\,\thanks{
    \,Computer Science Institute, Faculty of Mathematics and Physics,
    Charles University, Prague, Czech Republic. The author was supported by
    {\Gceiti} and by the student grant GAUK 60310. E-mail:
    \texttt{kupec@iuuk.mff.cuni.cz}.
  }
  \and Jean-S\'ebastien Sereni\,\thanks{
    \,CNRS (LORIA), Vand\oe{}uvre-l\`es-Nancy, France. E-mail:
    \texttt{sereni@kam.mff.cuni.cz}.
  } 
  \and Jan Volec\,\thanks{
    \,Mathematics Institute and DIMAP, University of Warwick, Coventry CV4
    7AL, UK\@. Previous affiliation: Computer Science Institute, Faculty of
    Mathematics and Physics, Charles University, Prague, Czech Republic.
    E-mail: \texttt{honza@ucw.cz}.
  }
  }

\maketitle

\begin{abstract}
  \noindent
  We study the following problem: given a real number $k$ and integer $d$,
  what is the smallest $\varepsilon$ such that any fractional
  $(k+\varepsilon)$-precoloring of vertices at pairwise distances at least
  $d$ of a fractionally $k$-colorable graph can be extended to a fractional
  $(k+\varepsilon)$-coloring of the whole graph? The exact values of
  $\varepsilon$ were known for $k\in\{2\}\cup [3,\infty)$ and any $d$. We
  determine the exact values of $\varepsilon$ for $k\in (2,3)$ if $d=4$,
  and $k\in [2.5,3)$ if $d=6$, and give upper bounds for $k\in(2,3)$ if
  $d=5,7$, and $k\in(2,2.5)$ if $d=6$. Surprisingly, $\varepsilon$ viewed
  as a function of $k$ is discontinuous for all those values of~$d$.
\end{abstract}


\section{Introduction and main results}

Graph coloring is one of the classical topics in graph theory. In this
paper, we seek conditions when a precoloring of some vertices in a graph
can be extended to a coloring of the entire graph. This line of research
was initiated by Thomassen~\cite{bib-thomassen} who asked for sufficient
conditions on extending precolorings of vertices in planar graphs. His
original question led to the following result of
Albertson~\cite{bib-albertson}.

\begin{theorem}[\cite{bib-albertson}]
  \label{thm-albertson}
  Let $G$ be an $r$-colorable graph and $W$ a subset of its vertex set such
  that the distance between any two vertices of $W$ is at least four. Then
  every $(r+1)$-coloring of $W$ can be extended to an $(r+1)$-coloring of
  $G$.
\end{theorem}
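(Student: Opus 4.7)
The plan is to fix any proper $r$-coloring $c$ of $G$ and then modify it in the immediate neighborhood of $W$ so that the new coloring agrees with $\phi$ on $W$, using the extra color $r+1$ as a buffer to absorb the conflicts created. The pairwise-distance hypothesis on $W$ should be precisely what keeps these local modifications from clashing.

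First I would fix a proper coloring $c \colon V(G) \to \{1, \ldots, r\}$ and single out the set
\[
  R := \bigl\{\, u \in V(G) \setminus W \;:\; \exists\, w \in W \text{ with } uw \in E(G) \text{ and } c(u) = \phi(w) \,\bigr\}
\]
of $W$-neighbors whose current color would conflict with $\phi$. I would then define a candidate extension $c'$ by $c'(v) := \phi(v)$ if $v \in W$, $c'(v) := r+1$ if $v \in R$, and $c'(v) := c(v)$ otherwise. Because the vertices of $W$ are pairwise at distance at least $4 > 1$, no $W$-vertex is adjacent to another, so $W \cap R = \emptyset$ and $c'$ is well-defined.

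Next I would verify that $c'$ is a proper $(r+1)$-coloring by a short case analysis on an edge $xy$. When neither endpoint lies in $R$, properness is immediate from properness of $c$ combined with the definition of $R$ (any neighbor of a $W$-vertex carrying a forbidden color is already in $R$, and edges between two $W$-vertices are excluded by the distance hypothesis). When $x \in W$ and $y \in R$, the distance hypothesis forces the witness $w \in W$ with $c(y) = \phi(w)$ to coincide with $x$ itself (any other choice would put two distinct $W$-vertices within distance $2$), and then $c'(x) = \phi(x) = c(y) \in \{1,\ldots,r\}$, which differs from $c'(y) = r+1$.

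The step I expect to be the main obstacle---and the only one that genuinely requires pairwise distance at least $4$ rather than merely $3$---is ruling out edges with both endpoints $u, u' \in R$. Such an edge would come with two witness vertices $w_1, w_2 \in W$. If $w_1 = w_2$, then $u$ and $u'$ would be adjacent neighbors of $w_1$ carrying the same $c$-color $\phi(w_1)$, contradicting properness of $c$. If $w_1 \neq w_2$, the path $w_1 u u' w_2$ witnesses $d(w_1, w_2) \leq 3$, contradicting the hypothesis on $W$. Once this case is excluded, $c'$ is the desired extension of $\phi$.
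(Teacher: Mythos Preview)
Your argument is correct and is essentially Albertson's original proof from~\cite{bib-albertson}; the present paper does not give its own proof of this statement but only quotes it as background. One entirely trivial case is left implicit in your edge analysis, namely an edge $xy$ with $y\in R$ and $x\notin W\cup R$: there $c'(x)=c(x)\in\{1,\ldots,r\}$ while $c'(y)=r+1$, so no conflict arises. Also, in the case $x\in W$, $y\in R$ your detour through the uniqueness of the witness is unnecessary (though not wrong): the conclusion $c'(x)=\phi(x)\in\{1,\ldots,r\}\ne r+1=c'(y)$ is immediate regardless of which $w$ witnesses $y\in R$.
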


This result initiated a line of research
\cite{bib-a2,bib-a3,bib-a4,bib-a5,bib-a6,bib-a7} seeking conditions for the
existence of an extension of a precoloring of various types of
subgraphs.

It is natural to ask whether an analogue of Theorem~\ref{thm-albertson}
also holds for non-integer relaxations of colorings. For circular colorings
introduced in~\cite{bib-vince}, the extension problem was almost completely
solved by Albertson and West~\cite{bib-west} (see~\cite{bib-zhu1,bib-zhu2}
for background and results on circular colorings).

Another well-established relaxation of classical colorings is the notion of
fractional colorings, see~\cite{bib-book}, which we address in this paper.
A \emph{fractional $k$-coloring} of a graph $G$ is an assignment of
measurable subsets of the interval $[0,k)\subseteq\R$ to the vertices of
$G$ such that each vertex receives a subset of measure one and adjacent
vertices receive disjoint subsets. The \emph{fractional chromatic number
  of~$G$} is the infimum over all positive real numbers $k$ such that $G$
admits a fractional \mbox{$k$-coloring}. For finite graphs (which we
restrict our attention to), such $k$ exists, the infimum is in fact a
minimum, and its value is always rational. A \emph{fractional
  \mbox{$k$-precoloring}} is an assignment of measurable subsets of measure
one of the interval $[0,k)$ to some vertices of a graph.

In this paper, we study conditions under which a fractional precoloring can
be completed to a fractional coloring of the whole graph.

\begin{problem}
  Let $\eps > 0$ be a real, $k\ge2$ a rational and $d\ge3$ an integer.
  Given a fractionally $k$-colorable graph $G$ and a fractional
  $(k+\eps)$-precoloring of a subset of its vertex set at pairwise distance
  at least $d$, is it possible to extend the precoloring to a fractional
  $(k+\eps)$-coloring of the whole graph $G$?
\end{problem}

For a fixed rational $k\ge 2$ and a fixed integer $d\ge3$, let $g(k,d)$ be
the infimum over all non-negative reals satisfying the following: for any
$\eps\ge g(k,d)$ and any fractionally $k$-colorable graph $G$, an arbitrary
$(k+\eps)$-precoloring of vertices at pairwise distance at least~$d$ in~$G$
can be extended to a fractional $(k+\eps)$-coloring of~$G$. The next
proposition, which is proved in~\cite{bib-extenze}, implies that for any
$\eps < g(k,d)$ there exists a fractionally $k$-colorable graph~$G$ with a
fractional $(k+\eps)$-precoloring of some of its vertices at pairwise
distance at least $d$, such that there is no extension of the precoloring
to a fractional $(k+\eps)$-coloring of $G$.

\begin{proposition}[\cite{bib-extenze}]
  \label{prop-monotone}
  Let $G$ be a graph with fractional chromatic number $k$ and $W$ a subset
  of its vertex set. The set of all non-negative reals $\eps$ such that any
  fractional $(k+\eps)$-precoloring of $W$ can be extended to a fractional
  $(k+\eps)$-coloring of $G$ is a closed interval.
\end{proposition}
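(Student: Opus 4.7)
The set $S$ of admissible $\eps$ is a closed interval once we verify (i) $S$ is upward closed---if $\eps \in S$ and $\eps' \geq \eps$ then $\eps' \in S$---and (ii) $S$ is topologically closed. Together (i) and (ii) force $S$ to be either empty or of the form $[\inf S,\infty)$, both of which are closed intervals, so it is enough to establish these two properties.

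For (i), my plan is constructive. Given $\eps \in S$, $\eps' > \eps$, and a fractional $(k+\eps')$-precoloring $P'$ with $P'(w) = A'_w$, I would decompose $A'_w = B'_w \sqcup C'_w$ where $B'_w = A'_w \cap [0,k+\eps)$ and $C'_w = A'_w \cap [k+\eps,k+\eps')$. Since $G[W]$ has fractional chromatic number at most $k$ and the total measure to be placed is at most $1$ per vertex, one can pick measurable sets $D_w \subseteq [0,k+\eps) \setminus B'_w$ with $|D_w| = |C'_w|$ so that $P(w) := B'_w \cup D_w$ defines a valid $(k+\eps)$-precoloring of $W$; the feasibility of this choice amounts to extending the partial assignment $\{B'_w\}$ of $G[W]$ to a full $(k+\eps)$-coloring of $G[W]$, which is a standard LP argument using the slack between $k$ and $k+\eps$. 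Now use $\eps \in S$ to extend $P$ to a $(k+\eps)$-coloring $c$ of $G$, and define $c'$ by $c'(w) := A'_w$ on $W$ and $c'(v) := c(v)$ off $W$. On an edge $wv$ with $w \in W$ and $v \notin W$, the piece $C'_w \subseteq [k+\eps,k+\eps')$ misses $c(v) \subseteq [0,k+\eps)$ automatically, while $B'_w \subseteq c(w)$ misses $c(v)$ by the validity of $c$; hence $c'$ is a valid $(k+\eps')$-coloring of $G$ extending $P'$, and $\eps' \in S$.

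For (ii), by (i) it suffices to handle sequences $\eps_n \downarrow \eps^*$ with $\eps_n \in S$. Given a $(k+\eps^*)$-precoloring $P^*$ of $W$, regard $P^*$ as a $(k+\eps_n)$-precoloring (using $[0,k+\eps^*) \subseteq [0,k+\eps_n)$) and extend it via $\eps_n \in S$ to a $(k+\eps_n)$-coloring $c_n$ of $G$. The indicators $\chi_{c_n(v)}$ lie in the unit ball of $L^\infty([0,k+\eps_1))$; by Banach--Alaoglu and a diagonal extraction, pass to a subsequence along which $\chi_{c_n(v)}$ converges weakly-$*$ to a density $f_v$. Passage to the limit yields that each $f_v$ is $[0,1]$-valued, supported in $[0,k+\eps^*)$, has integral $1$, satisfies $f_u + f_v \leq 1$ on every edge, and coincides with $\chi_{A^*_w}$ for $w \in W$. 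To promote this density-valued solution to a set-valued coloring, partition $[0,k+\eps^*)$ into the finitely many atoms of the $\sigma$-algebra generated by $\{A^*_w\}_{w \in W}$; on each atom the $W$-vertices have constant density $0$ or $1$, while the remaining densities define a fractional independent set of $G$ at each point, which the standard LP decomposition expresses as a convex combination of true independent sets. Integrating the coefficients over each atom and subdividing the atom accordingly produces the required set-valued $(k+\eps^*)$-coloring of $G$ coinciding with $P^*$ on $W$, so $\eps^* \in S$.

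The more delicate step is (ii): weak-$*$ limits of indicators need not be indicators, so the closing step hinges on the atom-based rounding, where the finiteness of $G$ and of the $\sigma$-algebra generated by the precoloring enter crucially to convert the limiting density back into a genuine set-valued coloring while preserving $P^*$ exactly.
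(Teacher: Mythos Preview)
The paper does not contain a proof of this proposition; it is quoted from \cite{bib-extenze} and used as a black box, so there is no argument in the present paper to compare yours against. That said, your attempt has a genuine gap in step (i).

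Your upward-closedness argument rests on completing the truncations $B'_w = A'_w \cap [0,k+\eps)$ to a valid $(k+\eps)$-precoloring $P$ of $W$, which you dismiss as ``a standard LP argument using the slack between $k$ and $k+\eps$''. This completion can fail. Take $G$ to be the $5$-cycle on $v_1,\dots,v_5$ together with one extra vertex $v_6$ adjacent only to $v_1$, so $k=\chi_f(G)=5/2$, and set $W=\{v_1,\dots,v_5\}$. Here every $\eps\ge 0$ lies in $S$, since any precoloring of $W$ extends to $v_6$ by picking a unit-measure subset of $[0,k+\eps)\setminus A_1$. Now take $\eps=0$, $\eps'=1$, and the $7/2$-precoloring $A'_1=A'_3=[0,1)$, $A'_4=[1,2)$, $A'_2=A'_5=[5/2,7/2)$, which is proper on $C_5$. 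The truncations are $B'_1=B'_3=[0,1)$, $B'_4=[1,2)$, $B'_2=B'_5=\varnothing$. Any completion forces $P(1)=P(3)=[0,1)$ and $P(4)=[1,2)$, so $P(5)$ must lie in $[0,5/2)\setminus[0,2)=[2,5/2)$, a set of measure $1/2<1$. Hence no $(k+\eps)$-precoloring $P$ with $P(w)\supseteq B'_w$ exists, and your construction of $c'$ cannot start---even though $P'$ itself extends trivially to $G$. (Shifting $A'_2,A'_5$ to $[5/2+\eps,7/2+\eps)$ gives the same obstruction for any small $\eps>0$, so the ``slack'' does not help.) The bound $\chi_f(G[W])\le k$ is simply not strong enough to guarantee that \emph{every} partial proper assignment on $G[W]$ completes within $[0,k+\eps)$.

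Your step (ii) is closer to correct, but one phrase needs tightening. You write that ``the remaining densities define a fractional independent set of $G$ at each point, which the standard LP decomposition expresses as a convex combination of true independent sets''. Pointwise you only have the edge constraints $f_u(x)+f_v(x)\le 1$, which place $(f_v(x))_v$ in the edge relaxation of the stable-set polytope; for non-perfect $G$ this is strictly larger than the stable-set polytope, so the decomposition into independent sets need not exist pointwise. What actually works is that for each atom $\alpha$ the \emph{averaged} vector $\bigl(m_\alpha^{-1}\int_\alpha f_v\bigr)_v$ is a weak-$*$ limit of the corresponding averages for the $c_n$, each of which is a convex combination of independent-set indicators; hence the limit lies in the (closed, convex) stable-set polytope and can be decomposed there. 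You should make this passage explicit rather than asserting a pointwise decomposition.
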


The only value of $d$ for which the values of $g(k,d)$ are known for all
$k\ge 2$ is $d=3$. In this case, $g(k,3)=1$ for all $k\in [2,\infty)$,
see~\cite{bib-extenze}. For $d\ge 4$, the values of $g(k,d)$ for
$k\in\{2\}\cup [3,\infty)$ were determined in~\cite{bib-extenze}.

\begin{theorem}[\cite{bib-extenze}]
  \label{thm-ext1}
  For every $k\in\{2\}\cup [3,\infty)$ and $d\ge 3$, we have:
  \begin{equation*}
    g(k,d)=\left\{\begin{array}{cl}
        \dfrac{\sqrt{(d'k-1)^2+4d'(k-1)}-(d'k-1)}{2d'}\,, &
        \text{if $d\equiv0\mod4$;}\\[3mm]
        \dfrac{k-1}{d'k}\,, &
        \text{if $d\equiv1\mod4$;}\\[3mm]
        \dfrac{\sqrt{(d'k)^2+4d'(k-1)}-d'k}{2d'}\,, &
        \text{if $d\equiv2\mod4$;}\\[3mm]
        \dfrac{k-1}{d'k+k-1}\,, & \text{otherwise,}\end{array}\right.
  \end{equation*}
  where $d'=\lfloor d/4\rfloor$. The formula also holds for
  $k\in[2,\infty)$ and $d=3$.
\end{theorem}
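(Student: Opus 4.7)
The strategy is to establish matching lower and upper bounds on $g(k,d)$, with a separate analysis for each of the four residue classes of $d$ modulo $4$.

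\emph{Lower bound (extremal examples).} For each residue class of $d\pmod 4$, I would exhibit a specific fractionally $k$-colorable graph together with a precoloring of a distance-$d$ separated set $W$ that admits no extension to a fractional $(k+\eps)$-coloring for any $\eps$ strictly below the claimed value. The natural template is a ``dumbbell'' construction: two copies of a graph realising fractional chromatic number exactly $k$ (such as a circular clique $K_{a/b}$ with $a/b=k$, or a Kneser graph for $k=2$), joined by a path of length close to $d$, with one precolored vertex placed in each copy. Choosing the two precolorings to be ``maximally incompatible'' copies of the same set forces a linear program along the path: on each side of the path, the propagated influence of a precolored endpoint occupies a decreasing fraction of a measure-$1$ subset of $[0,k+\eps)$, and extendability reduces to feasibility of this LP. Writing it out and computing its optimum yields exactly the formula of the theorem; the quadratic form appearing for $d\equiv 0,2\pmod 4$ reflects a two-variable tight dual constraint at the LP optimum.

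\emph{Upper bound (extension construction).} Given an arbitrary fractionally $k$-colorable $G$, a set $W$ with pairwise distances at least $d$, and $\eps$ equal to the stated value, I would construct an extension $c$ of the precoloring $p$ directly. Fix any fractional $k$-coloring $c_0\colon V(G)\to 2^{[\eps,\,k+\eps)}$ of $G$, shifted so that the ``extra'' slot $[0,\eps)$ is free. For every vertex $v$, let $i(v)=\min(\mathrm{dist}(v,W),\lfloor d/2\rfloor)$ be its capped distance from $W$, and let $u(v)\in W$ be a closest precolored vertex. Define $c(v)$ as a measure-$1$ mixture of two ingredients on $[0,k+\eps)$: a piece of $c_0(v)$ and a propagated piece of $p(u(v))$, combined according to a layer-weight schedule $\lambda_0=1>\lambda_1>\cdots>\lambda_{\lfloor d/2\rfloor}=0$. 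The schedule is the one dictated by the dual of the lower-bound LP. Verifying that this assignment is well-defined and disjoint on each edge is the content of the upper bound; it is feasible precisely when $\eps$ is at least the claimed formula value.

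\emph{Main obstacle.} The difficulty is the case analysis on $d\pmod 4$. The residue of $d$ modulo $2$ decides whether the two propagation fronts meet on a vertex (odd) or across an edge (even), producing the linear versus more subtle formulas; the residue of $\lfloor d/2\rfloor$ modulo $2$ decides whether the optimum lies on a one-variable or two-variable face of the LP, producing the linear versus square-root expressions. Ensuring that the extremal graphs realise the LP optimum exactly, and that the mixing schedule in the upper bound is feasible at the same $\eps$, requires delicate measure-theoretic bookkeeping in each of the four cases; the restriction to $k\in\{2\}\cup[3,\infty)$ is presumably exactly the range in which the propagation argument does not lose any slack, so the LP is tight. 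Unifying these four routine-but-intricate calculations under one cleanly parametrised argument is the real work.
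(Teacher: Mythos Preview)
Note first that this theorem is not proved in the present paper; it is quoted from~\cite{bib-extenze}. However, the machinery developed in Section~2 of this paper---the universal graphs $U^n_{p,q,d}$, Proposition~\ref{prop-homo}, and Proposition~\ref{prop-pseudorandom}, the last of which is explicitly described as ``implicit in the proof of Theorem~\ref{thm-ext1} in~\cite{bib-extenze}''---lets one reconstruct the shape of that proof with confidence, and your plan deviates from it in two places that are genuine gaps rather than alternative routes.

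For the upper bound, ``a measure-$1$ mixture of $c_0(v)$ and a propagated piece of $p(u(v))$ according to a layer-weight schedule $\lambda_0>\cdots>\lambda_{\lfloor d/2\rfloor}$'' is not yet a construction: you must name a specific measurable subset for each vertex and check disjointness on \emph{every} edge, not only those on a geodesic to $W$. The actual argument first reduces, via Proposition~\ref{prop-homo}, to colouring the universal graph $U^n_{p,q,d}$, and then invokes the pseudorandom partitions $f_e,f_o$ of Proposition~\ref{prop-pseudorandom} to colour the common base of all rays in a way that intersects every precolour $C_i$ in the same proportion; only after this reduction does a layer-by-layer recolouring along each ray (as in Lemmas~\ref{lemma-d0}, \ref{lemma-d2}, \ref{lemma-d1}, \ref{lemma-d3} of the present paper) become well-posed. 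A single background colouring $c_0$ has no reason to interact uniformly with all the $C_i$, so your mixing scheme cannot be made to work simultaneously near every precoloured vertex without this step.

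For the lower bound, a two-vertex dumbbell is too weak. The constructions in~\cite{bib-extenze}---and the ones in Theorems~\ref{thm-lb-d4} and~\ref{thm-lb-d6k25} here, which explicitly reuse ``the same precoloring as was used in~\cite{bib-extenze}''---take $G=U^n_{p,q,d}$ with $n=\binom{p'}{q}$ and precolour the special vertices with \emph{all} sets $f(X)$, $X\in\binom{[p']}{q}$; an averaging argument over these many precolours then locates a ray whose base colouring is forced to intersect its precolour in the worst possible proportion. With only two precoloured vertices there is nothing to average over, and an LP along a single connecting path does not see the Kneser-graph structure of the base that produces the constants in the statement. Your instinct that the restriction $k\in\{2\}\cup[3,\infty)$ is where ``the propagation argument does not lose slack'' is correct in spirit, but the concrete reason is Proposition~\ref{prop-indep}: for $k\ge 3$ the distance-$\ell$ layers of a ray are not independent, which is exactly what makes the~\cite{bib-extenze} argument tight there and forces the new ideas of this paper for $k\in(2,3)$.
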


The main goal of this paper is to shed more light on values of $g(k,d)$ for
$k\in (2,3)$. We determine the values of $g(k,d)$ for $k\in (2,3)$ if
$d=4$, and for $k\in [2.5,3)$ if $d=6$ (see Figures~\ref{fig-d4}
and~\ref{fig-d6}).

\begin{figure}[t]
  \begin{center}
    \epsfxsize=100mm
    \epsfbox{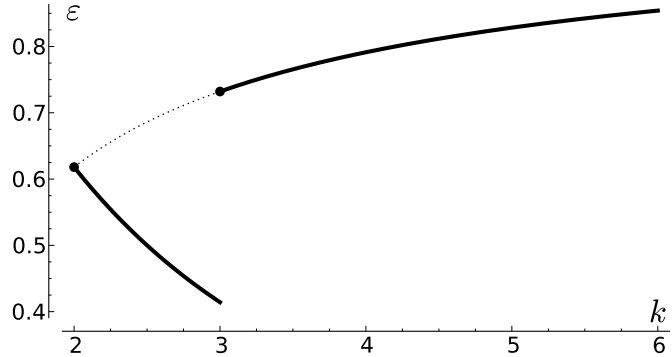}
    \caption{The values of $g(k,4)$. The dotted line represents the
      extension of $g(k,4)$ for $k\in\{2\}\cup[3,\infty)$ to $k\in(2,3)$.}
    \label{fig-d4}
  \end{center}
\end{figure}

\begin{theorem}
\label{thmain-d4}
  For $k\in[2,3)$ we have
  $g(k,4)=\frac12\bigl(\sqrt{(k-1)^2+4}-k+1\bigr)$.
\end{theorem}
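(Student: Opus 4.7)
The first step is algebraic: the quantity $\eps_0 := \frac12(\sqrt{(k-1)^2+4} - k + 1)$ is the positive root of $\eps^2+(k-1)\eps-1=0$, equivalently the unique positive solution of $\eps_0(k-1+\eps_0)=1$. The proof then splits into an upper bound $g(k,4)\le\eps_0$ and a matching lower bound $g(k,4)\ge\eps_0$, each treated separately.

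For the upper bound, the plan is to take an arbitrary fractional $k$-coloring $c_0$ of $G$ using the interval $[0,k)$ and to build an extension $c'$ of the given precoloring $c$ using the slightly larger interval $[0,k+\eps_0)$. The structural fact underlying the construction is that, because the vertices of $W$ are at pairwise distance at least four, no two of them share a neighbor, and for every $v\in N_1(W)\cup N_2(W)$ there is a unique vertex $w\in W$ within distance two. One sets $c'(v):=c(v)$ for $v\in W$ and $c'(v):=c_0(v)$ for $v$ at distance at least three from $W$. For $v\in N_1(W)\cup N_2(W)$ one takes $c'(v)$ to be $c_0(v)$ with the portion conflicting with $c(w)$ (where $w$ is the unique nearby precolored vertex) replaced by fresh colors drawn from $[k,k+\eps_0)$, carefully coordinated so that adjacent vertices in the same cluster around $w$ receive disjoint fresh colors. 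The equation $\eps_0(k-1+\eps_0)=1$ is what makes the total mass of fresh color required by vertices in $N_1(W)$ (at most $\eps_0$) and the further mass propagated into $N_2(W)$ (multiplied by the available slack $k-1+\eps_0$) balance to the unit mass of each vertex.

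For the lower bound, the plan is to construct, for every rational $k\in[2,3)$, a fractionally $k$-colorable graph $G_k$ containing two vertices $w_1,w_2$ at distance exactly four such that the precoloring $c(w_1):=[0,1)$ and $c(w_2):=[1,2)$ cannot be extended whenever $\eps<\eps_0$. The natural five-layer template is $V(G_k)=\{w_1\}\cup U_1\cup X\cup U_2\cup\{w_2\}$, in which $w_i$ is joined to all of $U_i$, $X$ is joined to all of $U_1\cup U_2$, and the graph induced on $U_1\cup X\cup U_2$ is tuned so that $\chi_f(G_k)=k$: for $k=2$ one takes $X$ to be independent, while for $k\in(2,3)$ one realizes $X$ via a suitably weighted blow-up of an odd cycle $C_{2m+1}$, or an analogous triangle-free gadget whose fractional chromatic number is exactly $k$. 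Writing out the extension constraints layer by layer, namely that any $u\in U_1$ avoids $c(w_1)$, any $u\in U_2$ avoids $c(w_2)$, and any $x\in X$ avoids the colors assigned to its neighbors, reduces to a small linear program whose optimum is precisely the quadratic inequality $\eps(k-1+\eps)\ge1$, matching $\eps_0$.

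The main obstacle is the lower bound. A naive construction such as a single length-four path, a simple fan, or the extremal graph used in the range $k\ge3$ either yields a weaker bound, typically of the form $\eps\ge1/2$, or contains a triangle and hence is not fractionally $k$-colorable for $k<3$. Producing a graph that is simultaneously fractionally $k$-colorable for the given $k\in[2,3)$, triangle-free, and tight for the target value $\eps_0$ is the step that requires the most care, and it is precisely where the discontinuity at $k=3$ announced in the abstract is generated.
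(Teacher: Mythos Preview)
Your lower bound construction does not give the claimed value. Take the easiest case $k=2$, where $\eps_0=\tfrac{1}{2}(\sqrt{5}-1)\approx 0.618$. Your graph is bipartite with layers $\{w_1\},U_1,X,U_2,\{w_2\}$, complete bipartite between consecutive layers, $X$ independent, and precolors $c(w_1)=[0,1)$, $c(w_2)=[1,2)$. One checks directly that this precoloring \emph{does} extend whenever $\eps\ge\tfrac12$: put $A_1=[1.5,2.5)$ on all of $U_1$, $A_2=[0,0.5)\cup[2,2.5)$ on all of $U_2$, and $[0.5,1.5)$ on all of $X$. So your construction only yields $g(2,4)\ge\tfrac12$, not $\tfrac{1}{2}(\sqrt5-1)$. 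For $k\in(2,3)$ the plan breaks earlier: if $X$ is complete to $U_1\cup U_2$ and $\chi_f(X)=k$, then any fractional $k$-coloring of $X$ must cover all of $[0,k)$ (since equality holds in the LP), leaving no color for $U_1\cup U_2$; thus $\chi_f(G_k)>k$ and the graph is not even admissible. Two precolored vertices with fixed disjoint intervals simply do not carry enough information; the paper instead precolors $n\binom{p}{q}$ special vertices of the universal graph $U^n_{p,q,4}$ with \emph{all} $q$-subsets of a finer partition, and then uses a spectral expansion fact about Kneser graphs (every independent set $I$ in $K_{p/q}$ satisfies $|N(I)|\ge\frac{p-q}{q}|I|$) to find, by averaging, a base vertex whose neighborhood already uses measure at least $k-1$ of color. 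That spectral step is the missing idea.

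Your upper bound sketch also has a gap. Starting from an arbitrary fractional $k$-coloring $c_0$ and ``replacing the portion of $c_0(v)$ conflicting with $c(w)$ by fresh colors from $[k,k+\eps_0)$'' cannot work as stated: a vertex $v\in N_1(w)$ may have $c_0(v)\subseteq c(w)$, so the conflicting portion has measure $1$, yet only $\eps_0<1$ of fresh color is available. Your parenthetical ``at most $\eps_0$'' is exactly what has to be \emph{proved}, and it is false for a generic $c_0$. The paper does not modify an arbitrary coloring; it first reduces to the universal graph $U^n_{p,q,4}$ and then invokes a ``pseudorandom'' partition $f_e:[p]\hookrightarrow 2^{[0,k+\eps)}$ (Proposition~\ref{prop-pseudorandom}) with the key property that $\|f_e(a)\cap C_i\|=1/(p+q\eps)$ for \emph{every} precolor $C_i$. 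This uniform-intersection property is what forces each base vertex to lose only measure $1/(k+\eps)$ to the precolor, and that is precisely where the quadratic $\eps(k-1+\eps)=1$ enters.
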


\begin{theorem}
\label{thmain-d6k25}
  For $k\in\{2\}\cup[2.5,3)$ we have
  $g(k,6)=\frac12\bigl(\sqrt{k^2+4}-k\bigr)$.
\end{theorem}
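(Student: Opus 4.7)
The theorem packages an upper bound $g(k,6)\le\tfrac12(\sqrt{k^2+4}-k)$ and a matching lower bound. Both bounds hinge on rationalising the formula into the equivalent identity $\eps(k+\eps)=1$, and my plan is to handle the two directions separately, starting with the lower bound because it pinpoints the extremal structure.

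For the lower bound I would exhibit, for each admissible $k$, a fractionally $k$-colorable graph $G$ and a distance-$6$-separated set $W$ carrying a precoloring that fails to extend whenever $\eps(k+\eps)<1$. The skeleton of $G$ is a path of length $6$, say $w_0v_1v_2v_3v_4v_5w_6$, with $W=\{w_0,w_6\}$ precolored by disjoint measure-$1$ subsets of $[0,k+\eps)$. Propagating the disjointness constraints two steps from each endpoint subject to the precoloring yields a quadratic constraint on the color set of the middle vertex $v_3$ that simplifies exactly to $\eps(k+\eps)\ge 1$. To push the fractional chromatic number of the resulting graph up to exactly $k$, I would attach Kneser-type gadgets away from the path, using the slack available for $k\in\{2\}\cup[2.5,3)$ to do so without creating shorter paths between $w_0$ and $w_6$.

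For the upper bound I would fix any fractional $k$-coloring $c$ of $G$ on $[0,k)$, viewed inside $[0,k+\eps)$. Since vertices of $W$ are at pairwise distance at least $6$, the balls $B_2(w)$ are pairwise at distance at least $2$, so each can be recolored independently while $c$ remains valid outside them. The local task inside $B_2(w)$ is to produce a fractional $(k+\eps)$-coloring of $B_2(w)\cup\{w\}$ that equals $p(w)$ at $w$, is proper on internal edges, and agrees with $c$ across the cut to distance-$3$ vertices. I would model this as a finite linear program whose variables are measures of independent-set color types and prove feasibility by constructing an explicit rotation: transfer a mass of $\eps/(k+\eps)$ of $p(w)$ outwards onto neighbours and distance-$2$ vertices, and fill the freed room in $[k,k+\eps)$ with redistributed pieces of $c$. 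The defining quadratic $\eps(k+\eps)=1$ is precisely the threshold at which this rotation closes up.

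The main obstacle is proving feasibility of the local LP uniformly over every possible structure of $B_2(w)$ and every possible boundary trace of $c$. The restriction $k\in\{2\}\cup[2.5,3)$ is crucial here: for $k\ge 2.5$ fractional colorability leaves enough slack at each neighbour of $w$ to absorb the rotation, while for $k\in(2,2.5)$ this argument genuinely breaks down, which is ultimately the source of the discontinuity of $g(\cdot,6)$ highlighted in the abstract.
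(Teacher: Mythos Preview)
Your lower-bound construction is the real gap. A bare path $w_0v_1\ldots v_5w_6$ with two precolored endpoints does \emph{not} force $\eps(k+\eps)\ge 1$. Already for $d=4$ and $k=2$ the analogous path $w_0v_1v_2v_3w_4$ with disjoint $C_0,C_4$ only forces $\eps\ge\tfrac12$ (choose $c(v_1),c(v_3)$ to overlap on the surplus~$\eps$), whereas $g(2,4)=(\sqrt5-1)/2\approx0.618$; for $d=3$ the path gives no constraint at all while $g(2,3)=1$. Two precolored vertices are simply not enough: the paper's lower bound precolors $n\binom{p}{q}$ special vertices of the universal graph $U^n_{p,q,6}$ with \emph{all} $q$-subsets of a fine partition, then uses an averaging argument to locate one ray on which an LP-duality argument bites. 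The dual certificate is a carefully built fractional clique of weight $k$ in $K_{p/q}$ that places total weight exactly~$1$ on the neighbourhood of a fixed vertex (Proposition~4.4 in the paper); this is where the hypothesis $k\in[2.5,3)$ enters, and it is the genuinely hard step. ``Attaching Kneser-type gadgets away from the path'' cannot reproduce this: gadgets disjoint from the path do not constrain the path coloring, and if you instead thread Kneser structure through the path vertices you are rebuilding the universal graph and still need the fractional-clique/LP-duality machinery, not a two-endpoint propagation.

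You have also inverted the role of the range restriction. The upper bound $g(k,6)\le\tfrac12(\sqrt{k^2+4}-k)$ holds for \emph{all} rational $k<3$ (Theorem~4.3 in the paper), by an explicit recoloring on each ray of $U^n_{p,q,6}$; nothing breaks for $k\in(2,2.5)$. What fails for $k\in(2,2.5)$ is the \emph{matching lower bound}: the fractional-clique construction of Proposition~4.4 needs $p/q\ge 2.5$, and indeed the paper conjectures a strictly smaller value of $g(k,6)$ on $(2,2.5)$. So the discontinuity at $k=2.5$ comes from the lower bound changing, not from your local LP becoming infeasible.
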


For additional values of $k\in (2,3)$ and $d$, we provide upper bounds
(Theorems~\ref{thm-d0}, \ref{thm-d2}, \ref{thm-d1}, \ref{thm-d3},
and~\ref{thm-d7k25}) which we believe to be tight. See Figures~\ref{fig-d5}
and~\ref{fig-d7} for the bounds we can prove for $d=5$ and $d=7$. To our
surprise, for fixed $d \in \{4,5,6,7\}$, the function $g(k,d)$ is
discontinuous in $k$ at $k=3$, while for $d\in\{6,7\}$ the function
$g(k,d)$ is also discontinuous at $k=2.5$. We provide some additional
comments on those observations in Section~\ref{sect-open}. Also note that
the functions $g(k,4)$ and $g(k,6)$ are decreasing on the intervals $[2,3)$
and $[2.5,3)$, respectively, whereas for all $d\ge3$ the functions $g(k,d)$
are increasing on $k\in[3,\infty)$.

\begin{figure}[th]
  \begin{center}
    \epsfxsize=100mm
    \epsfbox{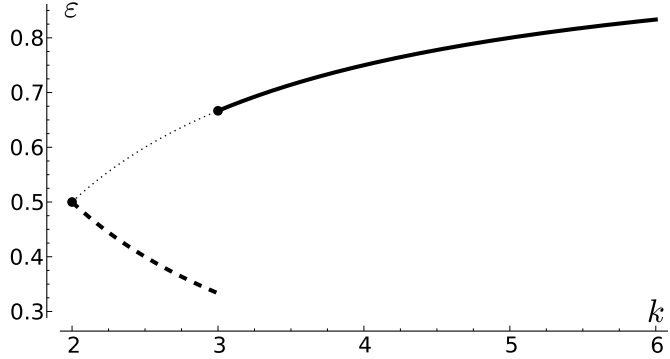}
    \caption{The values of $g(k,5)$. The dashed line gives the upper
        bound of $g(k,5)$ for $k\in(2,3)$. The dotted line represents the
      extension of $g(k,5)$ for $k\in\{2\}\cup[3,\infty)$ to $k\in(2,3)$.}
    \label{fig-d5}
  \end{center}
\end{figure}

\begin{figure}[th]
  \begin{center}
    \epsfxsize=100mm
    \epsfbox{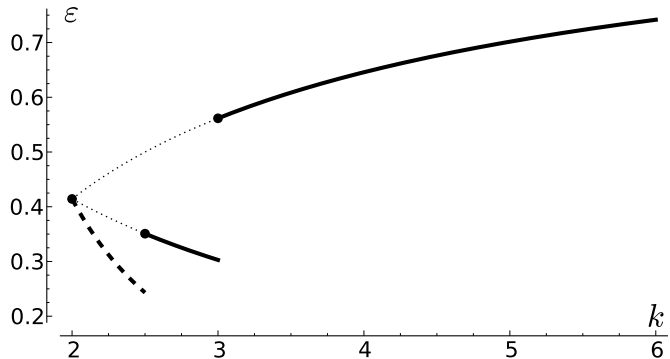}
    \caption{The values of $g(k,6)$. The dashed line gives the upper
        bound of $g(k,6)$ for $k\in(2,2.5)$. The dotted lines represent
      the extension of $g(k,6)$ for $k\in\{2\}\cup[3,\infty)$ to
      $k\in(2,3)$, and for $k\in\{2\}\cup[2.5,3)$ to $k\in (2,2.5)$,
      respectively.}
    \label{fig-d6}
  \end{center}
\end{figure}

\begin{figure}[th]
  \begin{center}
    \epsfxsize=100mm
    \epsfbox{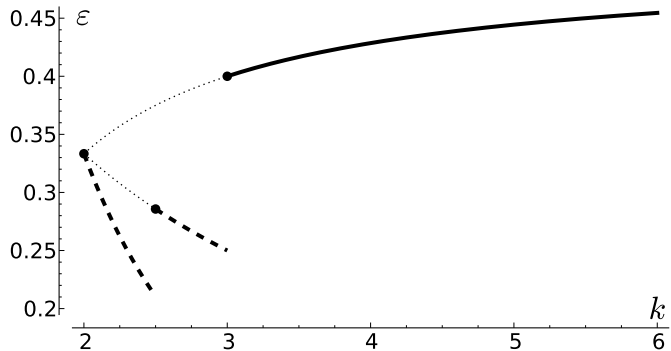}
    \caption{The values of $g(k,7)$. The dashed lines give the upper
        bound of $g(k,7)$ for $k\in(2,3)$. The dotted lines represent the
      extension of $g(k,7)$ for $k\in\{2\}\cup[3,\infty)$ to $k\in(2,3)$,
      and of the conjectured function for $k\in\{2\}\cup[2.5,3)$ to
      $k\in(2,2.5)$, respectively.}
    \label{fig-d7}
  \end{center}
\end{figure}

The paper is organized as follows. In the analysis of the values of
$g(k,d)$, we consider four cases based on the remainder of $d$ modulo 4. In
Section~\ref{sect-d0}, we present our upper bounds on $g(k,d)$ for
$k\in(2,3)$ and $d$ divisible by four. We also present the matching lower
bound for $d=4$. This lower bound is based on a simple expansion bound on
independent sets in Kneser graphs based on eigenvalues of its adjacency
matrix. In Section~\ref{sect-d2}, we present our upper bounds on $g(k,d)$
for $k \in (2,3)$ and $d$ congruent to two modulo four. This section also
contains the matching lower bound for the case $d=6$ and $k\in[2.5,3)$.
This lower bound uses a suitable solution of the linear program dual to
that for finding the fractional chromatic number of a Kneser graph.
Finally, in Sections~\ref{sect-upper-d1} and~\ref{sect-upper-d3} we present
our upper bounds on $g(k,d)$ for $d$ congruent to one and three,
respectively.


\section{Notation, definitions and preliminary results}

Before we can present our results, and their proofs, in detail, we need to
introduce some notation. For a positive integer $n$, we set
$[n]:=\{1,\ldots,n\}$. Next, for a set $Y \subseteq [0,\infty)$ we write
$2^Y$ for the set of all measurable subsets of $Y$. If $f:X\to 2^Y$ is a
mapping from a set~$X$ to $2^Y$ and $A$ is a subset of $X$, we write $f(A)$
for the set $\bigcup_{a\in A} f(a)$. We also write $g:X\hookrightarrow 2^Y$
for mappings from $X$ to $2^Y$ such that $g(i)\cap g(j) =\varnothing$ for
any two distinct $i,j\in X$.

We gave one possible definition of the fractional chromatic number of a
graph~$G$ in the introduction. An equivalent definition can be given as a
linear relaxation of determining the ordinary chromatic number: assign
non-negative real weights to the independent sets of $G$ such that for
every vertex $v\in V(G)$ the sum of the weights of independent sets
containing $v$ is at least one. The minimum possible sum of weights of all
independent sets in $G$, where the minimum is taken over all such
assignments, is equal to the fractional chromatic number of $G$.

The definition of fractional colorings also allows one to define a class of
\emph{universal} graphs, i.e., a class such that for every graph with
fractional chromatic number $k$ there is a homomorphism to one of the
graphs in this class. A \emph{homomorphism} from a graph $G$ to a graph~$H$
is a mapping $f:V(G)\to V(H)$ such that if $u$ and $v$ are two adjacent
vertices of $G$, then the vertices $f(u)$ and $f(v)$ are adjacent in $H$.
If such a mapping exists, we say that $G$ is \emph{homomorphic} to~$H$.

Universal graphs for fractional colorings are Kneser graphs $K_{p/q}$; the
graph $K_{p/q}$, for integers $1\le q\le p$, has a vertex set formed by all
$q$-element subsets of $[p]$, i.e.,
$V(K_{p/q})=\displaystyle\twvec{[p]\\q}$. Two vertices $A$ and $A'$ are
adjacent if $A\cap A'=\varnothing$. It is not hard to show that the
fractional chromatic number of $K_{p/q}$ is equal to $p/q$. The following
proposition can be found, e.g., in~\cite{bib-homo}.

\begin{proposition}
  \label{prop-fract}
  Let $G$ be a graph with fractional chromatic number $k$. There exist
  integers $p$ and $q$ such that $k=p/q$ and $G$ is homomorphic to the
  graph $K_{p/q}$.
\end{proposition}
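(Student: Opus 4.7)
The plan is to leverage the linear-programming reformulation of the fractional chromatic number recalled in the paragraph preceding the proposition. The fractional chromatic number of $G$ equals the optimum of the linear program
\[
\min \sum_I w_I \quad \text{subject to } w_I \ge 0 \text{ for each } I \text{ and } \sum_{I \ni v} w_I \ge 1 \text{ for each } v \in V(G),
\]
where $I$ ranges over the (finitely many) independent sets of $G$. Since the coefficients of this program are integers, it admits a rational optimum attained at a basic feasible solution. Clearing denominators, I obtain a positive integer $q$ such that $a_I := q\, w_I$ is a non-negative integer for every independent set $I$, with $\sum_I a_I = qk$ and $\sum_{I \ni v} a_I \ge q$ for every vertex $v$. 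Setting $p = qk$ gives $k = p/q$ and $p \ge q$ (since $k \ge 1$ whenever $V(G)$ is non-empty), as required.

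Next I would construct the homomorphism combinatorially. Listing each independent set $I$ of $G$ with multiplicity $a_I$ produces a sequence $I_1, \ldots, I_p$. For each vertex $v$, set $S(v) := \{ i \in [p] : v \in I_i \}$, which has cardinality at least $q$ by the constraints above; let $f(v)$ be an arbitrary $q$-element subset of $S(v)$, so that $f$ maps $V(G)$ into $\binom{[p]}{q} = V(K_{p/q})$. If $u$ and $v$ are adjacent in $G$, then no $I_i$ can contain both of them, so $S(u) \cap S(v) = \varnothing$ and in particular $f(u) \cap f(v) = \varnothing$. Hence $f(u)$ and $f(v)$ are adjacent in $K_{p/q}$, so $f$ is a homomorphism.

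The only delicate point is the appeal to the rationality of an optimal basic feasible solution of a finite LP with integer coefficients; once this is in hand, the remainder is essentially bookkeeping. An alternative route would start from the measurable-subset definition given in the introduction and argue that the sets $\phi(v) \subseteq [0,k)$ can be replaced, without increasing $k$, by finite unions of intervals with rational endpoints; partitioning $[0,k)$ into the resulting atoms again yields integers $p$ and $q$, with the atoms playing the role of the indices in $[p]$.
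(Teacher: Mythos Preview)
Your argument is correct and is the standard proof of this folklore result: rationality of an optimal basic feasible solution of the covering LP, clearing denominators, and then reading off the homomorphism to $K_{p/q}$ from the resulting multiset of independent sets. Note, however, that the paper does not actually supply its own proof of this proposition; it merely states it and cites the survey of Hahn and Tardif (reference~\cite{bib-homo}), so there is no in-paper argument to compare against. What you wrote would be a perfectly suitable proof to insert if one were wanted.
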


Analogously to~\cite{bib-extenze}, our proofs are based on defining and
analyzing graphs that are universal for graphs (of a given fractional
chromatic number) with some precolored vertices. The graphs we introduce
now are isomorphic to the ones defined in~\cite{bib-extenze}, although we
use a slightly different notation.

The \emph{extension product} of two graphs $G$ and $H$ is the graph with
vertex set $V(G)\times V(H)$ such that vertices $(u,v)$ and $(u',v')$ are
adjacent if $u$ and $u'$ are adjacent in~$G$ and either $v=v'$, or $v$ and
$v'$ are adjacent in $H$. This type of a graph product was introduced by
Albertson and West~\cite{bib-west}. An equivalent notion was used
in~\cite{bib-extenze} under the name \emph{universal product}; the only
difference is that the meaning of $G$ and $H$ was swapped, i.e., the
universal product of~$G$ and $H$ is isomorphic to the extension product of
$H$ and $G$. For a set $X \in\twvec{[p]\\q}$, a \emph{ray} $R_{p,q,P}^X$ is
the extension product of the Kneser graph $K_{p/q}$ and the $(P+1)$-vertex
path with vertices $0,\ldots,P$; the vertex $(X,0)$ of $R_{p,q,P}^X$ is
marked as \emph{special}. The copy of $K_{p/q}$ in the ray $R_{p,q,P}^X$
corresponding to the vertex $P$ of the path is said to be the \emph{base}
of the ray. For brevity, $R_{p,q,P}$ will stand for $R_{p,q,P}^{[q]}$ in
what follows. The ray $R_{5,2,2}^{[2]}$ is sketched in Figure~\ref{fig-1}.
Note that the graph $R_{p,q,P}^X$ is homomorphic to $K_{p/q}$, and the
distance between the vertex $(X,0)$ and any vertex $(A,\ell)$, for
$A\in\twvec{[p]\\q}$ and $\ell\in[1,P]$, is at least $\ell$.
\begin{figure}
  \begin{center}
    \epsfbox{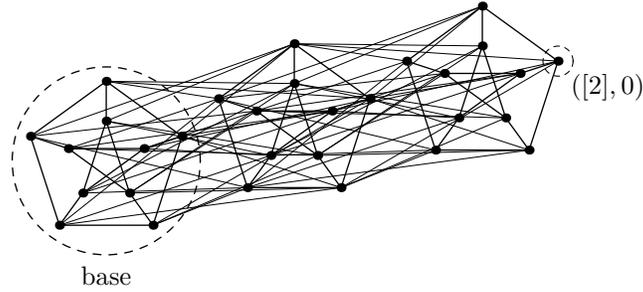}
  \end{center}
  \caption{The ray $R_{5,2,2}^{[2]}$.}
  \label{fig-1}
\end{figure}%

The graph $U_{p,q,d}^{n}$, which we now define, is a universal graph for
graphs with fractional chromatic number $p/q$ with $n$ precolored vertices
at pairwise distance at least $d$. Fix positive integers $p,q,d$ and $n$
such that $p\ge q/2$ and $d\ge3$. If $d$ is even, the graph $U_{p,q,d}^{n}$
is the extension product of the Kneser graph $K_{p/q}$ and the star
$K_{1,n{[p]\choose q}}$ with each edge subdivided $d/2-1$ times. For every
$X\in\twvec{[p]\\q}$, we mark the vertex $X$ as \emph{special} in $n$
copies of $K_{p/q}$ corresponding to the leaves of the star (for different
values of $X$, we choose different copies). In this way, the subgraphs of
$U_{p,q,d}^n$ corresponding to the products of the subdivided edges and
$K_{p/q}$ are isomorphic to rays $R_{p,q,d/2}^X$. Hence, the graph
$U_{p,q,d}^n$ can be viewed as obtained from $n$ copies of the ray
$R_{p,q,d/2}^X$ for each choice of $X\in\twvec{[p]\\q}$ through
identification of the bases of the rays. The graph $U^1_{5,2,6}$ is
sketched in Figure~\ref{fig-2}.
\begin{figure}
  \begin{center}
    \epsfbox{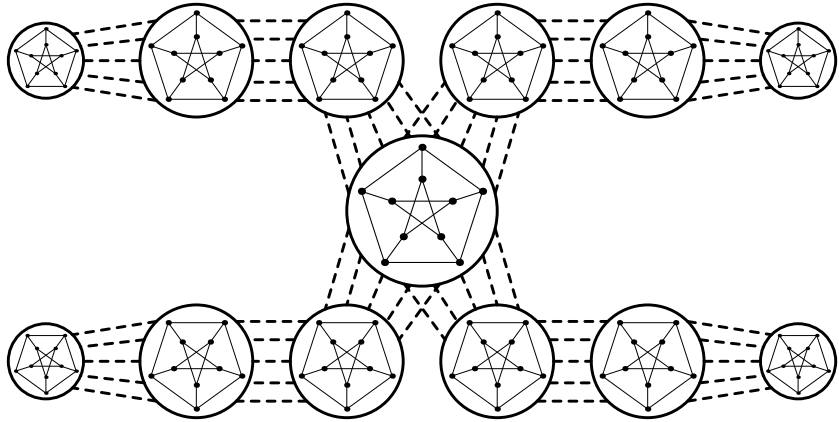}
  \end{center}
  \caption{A sketch of the graph $U^1_{5,2,6}$ (only 4 rays out of 10 are
    drawn).}
  \label{fig-2}
\end{figure}

For positive integers $N$ and $P$, let $L_{N,P}$ be the graph obtained from
a clique~$K_N$ by identifying each vertex of the clique with an end-vertex
of a $P$-vertex path; so $L_{N,P}$, for $P\ge2$, has $N\cdot(P-2)$ vertices
of degree two, $N$ vertices of degree one, and~$N$ vertices of degree $N$.
If $d$ is odd, the graph $U_{p,q,d}^n$ is the extension product of the
Kneser graph $K_{p/q}$ and the graph $L_{n{p\choose q},(d+1)/2}$. Again,
for each $X\in\twvec{[p]\\q}$, we mark vertices $X$ in $n$ of the copies of
$K_{p/q}$ corresponding to the vertices of degree one of
$L_{n{p\choose q},(d+1)/2}$ as special (with different copies for different
values of~$X$ again). In this way, we can view $U_{p,q,d}^n$ as a union of
$n\twvec{p\\q}$ rays $R_{p,q,(d-1)/2}^X$ with additional edges between
their bases. The graph $U^1_{5,2,7}$ is sketched in Figure~\ref{fig-3}.
\begin{figure}
  \begin{center}
    \epsfbox{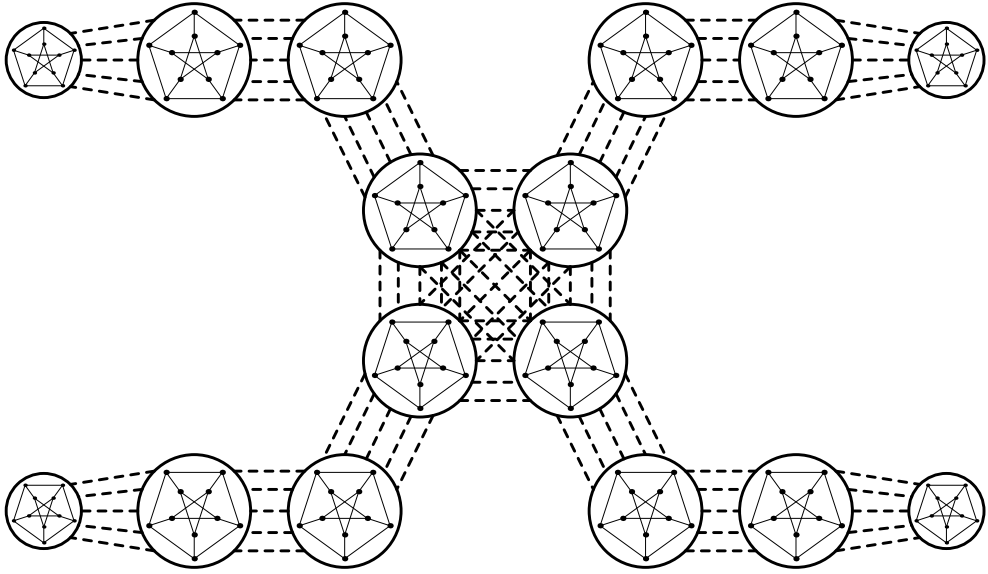}
  \end{center}
  \caption{A sketch of the graph $U^1_{5,2,7}$ (only 4 rays out of 10 are
    drawn).}
  \label{fig-3}
\end{figure}

In the next three propositions, we summarize the properties of the graphs
$U_{p,q,d}^n$ needed in the proofs. We start with the first two of them;
the proof of the first one is straightforward and the proof of the second
one is in~\cite{bib-extenze}.

\begin{proposition}
  \label{prop-universal}
  The graph $U_{p,q,d}^n$ for $p/q\ge2$ and $d\ge3$ is homomorphic to
  $K_{p/q}$ and its special vertices are at pairwise distance at least $d$.
\end{proposition}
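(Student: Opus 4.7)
The plan is to establish the homomorphism claim and the distance claim separately, exploiting that in both parities of $d$ the graph $U^n_{p,q,d}$ is defined as an extension product with $K_{p/q}$ as first factor. For the homomorphism, I would take the projection $\pi\colon(u,v)\mapsto u$ onto the first coordinate. By the very definition of the extension product, whenever $(u,v)$ and $(u',v')$ are adjacent in $U^n_{p,q,d}$, their first coordinates $u$ and $u'$ are adjacent in $K_{p/q}$; hence $\pi$ is a homomorphism to $K_{p/q}$, and the argument is identical in the even and the odd case.

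For the distance bound, the key observation I would isolate as a short lemma is the following: in the extension product of any graph $G$ with any graph $H$, the distance between $(u_1,v_1)$ and $(u_2,v_2)$ is at least $\mathrm{dist}_H(v_1,v_2)$. The reason is that along any path in the product, each edge either fixes the second coordinate or moves it along an edge of $H$, so reading off the second coordinates in order yields a walk in $H$ from $v_1$ to $v_2$ whose length is at most the length of the path.

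It then remains to compute $\mathrm{dist}_H$ between the second coordinates of pairs of special vertices in each of the two relevant second factors~$H$. For even $d$, the graph $H$ is the star $K_{1,N}$ with each edge subdivided $d/2-1$ times, so any two distinct leaves are at distance $2\cdot(d/2)=d$. For odd $d$, in $L_{N,(d+1)/2}$ any two distinct degree-one vertices are at distance $2\bigl((d+1)/2-1\bigr)+1=d$, traversing one pendant path, one clique edge, and the other pendant path. Since the defining identifications place each special vertex over a distinct leaf (respectively, degree-one vertex) of $H$, the lemma yields pairwise distance at least $d$ between special vertices. I do not anticipate a real obstacle; the only bookkeeping is to verify that distinct special vertices indeed project to distinct vertices of $H$, which is built into the construction ($n\binom{p}{q}$ leaves or degree-one vertices for $n\binom{p}{q}$ special vertices, one per set $X\in\binom{[p]}{q}$ in each of $n$ copies).
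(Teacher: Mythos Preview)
Your argument is correct and is exactly the straightforward verification the paper has in mind; the paper itself does not spell out a proof, merely remarking that it is ``straightforward'' (the projection to the first factor gives the homomorphism, and the second-coordinate projection gives the distance lower bound you compute). There is nothing to add.
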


\begin{proposition}[\cite{bib-extenze}]
  \label{prop-homo}
  Let $G$ be a graph with fractional chromatic number $k$ and~$W$ a subset
  of its vertex set at pairwise distance at least $d\ge 3$. There exist
  positive integers $p$ and~$q$, such that $k=p/q$ and the graph $G$ has a
  homomorphism to~$U_{p,q,d}^{|W|}$ that maps the vertices of $W$ to
  distinct special vertices of~$U_{p,q,d}^{|W|}$.
\end{proposition}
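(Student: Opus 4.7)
The plan is to combine the homomorphism $h:G\to K_{p/q}$ provided by Proposition~\ref{prop-fract} with a suitable ``layer'' assignment $\lambda:V(G)\to V(H)$, where $H$ is the auxiliary graph (the subdivided star if $d$ is even, or $L_{|W|\binom{p}{q},(d+1)/2}$ if $d$ is odd) whose extension product with $K_{p/q}$ defines $U_{p,q,d}^{|W|}$. The combined map $\phi(v):=(h(v),\lambda(v))$ will be the desired homomorphism.

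First, I would apply Proposition~\ref{prop-fract} to obtain positive integers $p,q$ with $k=p/q$ and a homomorphism $h:G\to K_{p/q}$. For each $X\in\binom{[p]}{q}$, the set $W_X:=\{w\in W:h(w)=X\}$ has at most $|W|$ elements, so it can be injected into the $|W|$ leaves of $H$ whose attached copy of $K_{p/q}$ has vertex $X$ marked as special. Combining these partial injections over all $X$ yields an injection $\ell$ from $W$ into the leaves of $H$ such that $(h(w),\ell(w))$ is a special vertex of $U_{p,q,d}^{|W|}$ for each $w\in W$.

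Next, for every $v\in V(G)$ set $\delta(v):=d_G(v,W)$. The pairwise distance condition on $W$ ensures that if $\delta(v)$ lies below a parity-dependent threshold (namely $\delta(v)\le d/2-1$ when $d$ is even and $\delta(v)\le(d-1)/2$ when $d$ is odd) then the closest vertex $w^*(v)\in W$ to $v$ is unique, because two distinct witnesses would force a pair of vertices of $W$ to be at distance strictly less than $d$. In that case I define $\lambda(v)$ to be the vertex on the path in $H$ from $\ell(w^*(v))$ towards its ``hub'' (the center of the star for even $d$, or the clique vertex attached to $\ell(w^*(v))$ for odd $d$) at distance $\delta(v)$ from $\ell(w^*(v))$. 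For $v$ above the threshold, set $\lambda(v)$ to the center of the star (even $d$) or to the clique vertex attached to some arbitrary closest element of $W$ (odd $d$). In particular $\lambda(w)=\ell(w)$ for every $w\in W$, so $\phi$ maps $W$ injectively into the special vertices of $U_{p,q,d}^{|W|}$.

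To verify that $\phi$ is a homomorphism, note that for any edge $uv$ of $G$ we have $h(u)h(v)\in E(K_{p/q})$, so by the definition of the extension product it suffices to check that $\lambda(u)=\lambda(v)$ or $\lambda(u)\lambda(v)\in E(H)$. Since $|\delta(u)-\delta(v)|\le1$, a short case analysis finishes the argument: below the threshold, uniqueness of $w^*$ forces $u$ and $v$ to share the same ray and map to path-positions differing by at most one; above the threshold for odd $d$, both are sent to clique vertices of $L_{|W|\binom{p}{q},(d+1)/2}$, which are pairwise adjacent; the mixed and even-$d$ cases are similar. The main obstacle will be the boundary case for odd $d$ in which $\delta(u)=\delta(v)=(d-1)/2$ with distinct closest vertices $w^*(u)\neq w^*(v)$; uniqueness is lost there, but both images are again clique vertices of the $L$-graph and hence adjacent, so the homomorphism property is preserved.
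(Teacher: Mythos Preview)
The paper does not give its own proof of this proposition; it is quoted verbatim from~\cite{bib-extenze} and used as a black box. Your argument is the natural construction and is essentially correct: pull back a homomorphism $h:G\to K_{p/q}$ via Proposition~\ref{prop-fract}, and pair it with a level map into the auxiliary graph~$H$ governed by $\delta(v)=d_G(v,W)$, exploiting the pairwise-distance hypothesis on $W$ to pin down a unique nearest precolored vertex whenever $\delta(v)$ is below the parity-dependent threshold. This is exactly how the universal graphs $U_{p,q,d}^n$ are designed to be used, and it is the argument given in~\cite{bib-extenze}.

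Two small points worth tidying. First, when a component of $G$ is disjoint from $W$ (so $\delta(v)=\infty$), ``an arbitrary closest element of $W$'' does not exist; simply send such vertices to any fixed clique vertex (odd~$d$) or to the center (even~$d$). Second, your phrase ``uniqueness is lost there'' in the odd-$d$ boundary case is slightly misleading: each of $u$ and $v$ still has a \emph{unique} nearest vertex in $W$ when $\delta(u)=\delta(v)=(d-1)/2$ (two witnesses for the same vertex would be at mutual distance at most $d-1$); what can fail is $w^*(u)=w^*(v)$, and that is precisely the case you then handle correctly by observing that both images are clique vertices of $L_{|W|\binom{p}{q},(d+1)/2}$ and hence adjacent.
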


The length of the shortest odd cycle of a graph $G$ is the \emph{odd girth}
of~$G$. The odd girth of the Kneser graph $K_{p/q}$ is equal to
$2\Bigl\lceil\dfrac{q}{p-2q}\Bigr\rceil + 1$, see~\cite{bib-oddgirth}. Note
that Proposition~\ref{prop-fract} implies that if $G$ is a fractionally
$k$-colorable graph, then its odd girth is at least
$2\lceil1/(k-2)\rceil+1$. The main difference between the case
$\chif\in\{2\} \cup [3,\infty)$, which was fully analyzed
in~\cite{bib-extenze}, and the case $\chif \in (2,3)$ is that vertices of a
ray $R_{p,q,P}$ at some fixed small distance from the special vertex form
an independent set. Observe that the minimum distance for which this
property does not hold is related to the odd girth of the Kneser graph
$K_{p/q}$.

\begin{proposition}
  \label{prop-indep}
  Consider a special vertex $s$ of a universal graph $U^n_{p,q,d}$ and an
  integer
  $\ell\in\bigl\{1,2,\ldots,\Bigr\lceil\dfrac{q}{p-2q}\Bigr\rceil-1\bigr\}$.
  The vertices at distance $\ell$ from $s$ form an independent set in
  $U^n_{p,q,d}$.
\end{proposition}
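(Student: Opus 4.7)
The plan is to prove the contrapositive in a single, short combinatorial step: if two vertices $u_1,u_2$ at distance exactly $\ell$ from $s$ were joined by an edge, then concatenating a shortest $s$--$u_1$ path, the edge $u_1u_2$, and a shortest $u_2$--$s$ path would produce a closed walk of length $2\ell+1$ in $U^n_{p,q,d}$. Any closed walk of odd length contains an odd cycle of length at most the length of the walk (this is standard: repeatedly excising an even closed subwalk reduces the walk while preserving parity, and eventually leaves a cycle). Hence the existence of such an edge would force $U^n_{p,q,d}$ to contain an odd cycle of length at most $2\ell+1$.

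To derive a contradiction, I would then bound the odd girth of $U^n_{p,q,d}$ from below. By Proposition~\ref{prop-universal}, $U^n_{p,q,d}$ is homomorphic to the Kneser graph $K_{p/q}$. Homomorphisms preserve the length of closed walks, so an odd cycle of length $c$ in $U^n_{p,q,d}$ maps to an odd closed walk of length $c$ in $K_{p/q}$, which in turn contains an odd cycle of length at most~$c$. Consequently, the odd girth of $U^n_{p,q,d}$ is at least that of $K_{p/q}$, and the latter equals $2\lceil q/(p-2q)\rceil+1$ by the result of~\cite{bib-oddgirth} quoted just before the proposition.

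It only remains to check that the hypothesis on $\ell$ makes these bounds incompatible. The assumption $\ell\le\lceil q/(p-2q)\rceil-1$ gives $2\ell+1\le 2\lceil q/(p-2q)\rceil-1<2\lceil q/(p-2q)\rceil+1$, so an odd cycle of length at most $2\ell+1$ is strictly shorter than the odd girth of $U^n_{p,q,d}$, a contradiction. Therefore no edge can join two vertices at distance exactly $\ell$ from $s$, and the set of such vertices is independent.

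There is essentially no obstacle: the only non-routine input is the odd-girth formula for Kneser graphs, which is cited, and the homomorphism $U^n_{p,q,d}\to K_{p/q}$, which is Proposition~\ref{prop-universal}. The rest is the standard ``closed walk of odd length contains an odd cycle'' argument, which does not need to be spelled out in detail.
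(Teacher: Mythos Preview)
Your argument is correct and is exactly the reasoning the paper intends: the proposition is stated without proof, with only the remark that the property ``is related to the odd girth of the Kneser graph $K_{p/q}$'' preceding it. Your closed-walk/odd-girth argument together with the homomorphism of Proposition~\ref{prop-universal} and the cited odd-girth formula is precisely how one fills in this observation.
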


Finally, we state the following proposition which is implicit in the proof
of Theorem~\ref{thm-ext1} in~\cite{bib-extenze}.

\begin{proposition}[\cite{bib-extenze}]
  \label{prop-pseudorandom}
  Let $k=p/q$ be rational, where $p,q \in \N$ and $p\ge2q$, $d,n \in \N$
  and $\eps > 0$. For every fractional $(k+\eps)$-precoloring of the
  special vertices of $U^n_{p,q,d}$ by subsets
  $C_1, C_2, \dots, C_{n{p\choose q}} \subseteq [0,k+\eps)$ there exist
  functions $f_o$ and $f_e$ from $[p]$ to $2^{[0,k+\eps)}$ such that the
  following holds:
  \vspace{-1mm}
  \begin{enumerate}[1)]
  \item for every {$i,j\in[p]$, $i\ne j$:} $f_o(i) \cap f_o(j)
    =\varnothing$ and $f_e(i) \cap f_e(j) =\varnothing$;
  \vspace{-1mm}
  \item for every $i \in [p]$ and $a \in \Bigl[n\twvec{p\\q}\Bigr]$:
    \vspace{-1mm}
    \begin{enumerate}[a)]
    \item $\|f_o(i)\|={(k+\eps)/p}$ and $\|f_o(i)\cap C_a \| =1/p$,
    \item $\|f_e(i)\|={1/q}$ and $\|f_e(i)\cap C_a\| =1/(p+q\eps)$.
    \end{enumerate}
  \end{enumerate}
\end{proposition}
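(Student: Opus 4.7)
The plan is to construct $f_o$ and $f_e$ independently by the same atom-splitting device. Let $N := n{p\choose q}$ and consider the finite Boolean algebra on $[0,k+\eps)$ generated by the measurable sets $C_1,\ldots,C_N$. Its atoms form a partition $B_1,\ldots,B_M$ of $[0,k+\eps)$ into (at most $2^N$) measurable pieces, each of which, for every $a$, either lies entirely inside $C_a$ or is disjoint from $C_a$. In particular $\sum_{j}\|B_j\|=k+\eps$, and for each $a$ we have $\sum_{j\,:\,B_j\subseteq C_a}\|B_j\|=\|C_a\|=1$.

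To define $f_o$, I would split each atom $B_j$ into $p$ disjoint measurable pieces $B_j^{1},\ldots,B_j^{p}$ of equal measure $\|B_j\|/p$ and set $f_o(i) := \bigcup_{j=1}^M B_j^{i}$. Then $f_o(1),\ldots,f_o(p)$ are pairwise disjoint, each has total measure $(k+\eps)/p$, and the intersection with any $C_a$ (being a union of atoms) has measure $\sum_{j\,:\,B_j\subseteq C_a}\|B_j\|/p = 1/p$. This gives properties 1) and 2a) for $f_o$.

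For $f_e$ I would do the analogous construction with a different scaling. Set $\alpha := 1/(p+q\eps)$ and, within each atom $B_j$, pick $p$ pairwise disjoint sub-pieces $\widetilde{B}_j^{1},\ldots,\widetilde{B}_j^{p}$ of measure $\alpha\|B_j\|$ each, which is possible since $p\alpha \le 1$; the remaining fraction $1-p\alpha=q\eps/(p+q\eps)$ of $B_j$ is left unused. Setting $f_e(i):=\bigcup_{j=1}^M \widetilde{B}_j^{i}$ makes the sets $f_e(i)$ pairwise disjoint, with $\|f_e(i)\|=\alpha(k+\eps)$ and $\|f_e(i)\cap C_a\|=\alpha\cdot\|C_a\|=1/(p+q\eps)$. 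The only arithmetic to verify is the identity $(k+\eps)/(p+q\eps)=1/q$, which is immediate from $k=p/q$ since $q(k+\eps)=p+q\eps$; this yields property 2b).

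The proof is essentially a routine equidistribution of Lebesgue measure inside the atoms generated by the precoloring, so there is no substantive obstacle beyond identifying the correct scaling factor $\alpha=1/(p+q\eps)$ for $f_e$, which is forced by the twin demands $\|f_e(i)\cap C_a\|=\alpha$ and $\|f_e(i)\|=1/q$ combined with the identity above. Note in particular that neither the hypothesis $p\ge 2q$ nor the graph structure of $U^n_{p,q,d}$ enters the argument: the proposition is in fact a pure statement about measurable subsets $C_1,\ldots,C_N$ of $[0,k+\eps)$, and its content is exactly the existence of these two equitable packings.
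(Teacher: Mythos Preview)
Your proof is correct. The atom-splitting construction does exactly what is needed: because each atom $B_j$ is either wholly inside or wholly outside every $C_a$, scaling each atom by a fixed factor ($1/p$ for $f_o$, $1/(p+q\eps)$ for $f_e$) simultaneously controls all the intersections $\|f_\bullet(i)\cap C_a\|$, and the arithmetic $(k+\eps)/(p+q\eps)=1/q$ is verified correctly. The observation that neither $p\ge 2q$ nor the structure of $U^n_{p,q,d}$ is used is also accurate; the statement is purely measure-theoretic.

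As for comparison: this paper does not supply its own proof of the proposition. It is quoted as being implicit in the proof of Theorem~\ref{thm-ext1} in~\cite{bib-extenze}, and the paper uses it as a black box. So there is no argument here to compare yours against. Your write-up would serve perfectly well as a self-contained justification of the proposition; if anything, it is cleaner than extracting the statement from an implicit occurrence elsewhere.
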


\medskip
In other words, the function $f_o$ in Proposition~\ref{prop-pseudorandom}
is an equipartition of the interval $[0,k+\eps)$ into $p$ measurable parts
$f_o(1), \dots, f_o(p)$ such that the measure of the intersection of
$f_o(i)$ with each set $C_j$, for $i\in[p]$ and
$j \in \Bigl[n\twvec{p\\q}\Bigr]$, is the same as the expected intersection
of $C_j$ with a~random subset of $[0,k+\eps)$ of measure $(k+\eps)/p$.
Analogously, $f_e$ is a~partition of an appropriate subset of $[0,k+\eps)$
of measure $k$ into~$p$ measurable parts $f_e(1), \dots, f_e(p)$, where the
parts have measure $1/q$ and the measure of the intersection of $f_e(i)$
with each set $C_j$ is the same as for a random subset of $[0,k+\eps)$ of
measure $1/q$.


\section{Distances divisible by four}
\label{sect-d0}


\subsection{Upper bounds}
\label{sect-upper-d0}

In this section we prove upper bounds on $g(k,d)$ for $d\equiv0\mod 4$ in
the case that~$k$ and~$d$ satisfy $2\le k < 2 + \dfrac2{d-2}\,$. Observe
that Proposition~\ref{prop-indep} guarantees that if we consider the ray
$R_{p,q,d/2}$, then for any {$\ell\in\{1,\ldots,(d-2)/2\}$,} the vertices
at distance $\ell$ from the special vertex form an independent set.

\begin{lemma}
  \label{lemma-d0}
  Let $\eps$ be a positive real and $n$, $p$, $q$ and $d$ positive integers
  such that $d\equiv0\mod4$ and $p/q\ge2$. If the conditions
  \begin{gather}
    \label{ineq-d0-og}
     2\le k < 2+\frac{1}{2d'-1} \quad\text{and}\\
    \label{ineq-d0-bound}
     \eps \sum_{j=0}^{d'-2} (k-1)^{2j+2} + \eps \cdot
    \frac{k-1+\eps}{k+\eps} \ge \frac{1}{k+\eps}
  \end{gather}
  are satisfied, where $d'=d/4$ and $k=p/q$, then any fractional
  $(k+\eps)$-precoloring of the special vertices of $U_{p,q,d}^n$ can be
  extended to a fractional $(k+\eps)$-coloring of $U_{p,q,d}^n$.
\end{lemma}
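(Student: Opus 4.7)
My plan is to construct an explicit fractional $(k+\eps)$-coloring of $U^n_{p,q,d}$ extending the precoloring, using the pseudorandom partitions from Proposition~\ref{prop-pseudorandom} and the independent-set structure of distance layers from Proposition~\ref{prop-indep}. Applying Proposition~\ref{prop-pseudorandom} to the precoloring $C_1, \dots, C_{n\binom{p}{q}}$ yields functions $f_o, f_e : [p] \to 2^{[0,k+\eps)}$. Writing $f_o(A) = \bigcup_{i \in A} f_o(i)$ and $f_e(A) = \bigcup_{i \in A} f_e(i)$ for a $q$-set $A$, these have measures $(k+\eps)/k$ and $1$ respectively, and each family is pairwise disjoint over disjoint $q$-sets. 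By a suitable refinement of the pseudorandom construction I intend to arrange $f_e(i) \subseteq f_o(i)$ for every $i \in [p]$ (which preserves the intersection identities with the $C_a$ at least in expectation, and hence exactly after averaging), so that $f_o(A)$ is automatically disjoint from $f_e(\bar A)$. Condition~(\ref{ineq-d0-og}) together with Proposition~\ref{prop-indep} then guarantees that the vertices at each distance $\ell \leq 2d' - 1$ from the special vertex in any ray form an independent set, freeing the assignment at each layer from internal conflict.

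I color the shared base of $U^n_{p,q,d}$, i.e.\ the level-$2d'$ copy of $K_{p/q}$, by $f(A, 2d') = f_e(A)$, which is a valid fractional Kneser coloring. In each ray $R^X_{p,q,2d'}$ I extend by alternating parities: vertices at even levels $2, 4, \dots, 2d' - 2$ inherit $f_e(A)$, and vertices at odd levels $2d' - 1, 2d' - 3, \dots, 3$ are assigned measure-$1$ subsets of $f_o(A)$. The nesting $f_e(i) \subseteq f_o(i)$ makes every cross-parity disjointness constraint automatic, so the only nontrivial level is~$1$, whose coloring must avoid both the propagated base constraint and the precoloring $C_a$ of the special vertex. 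The non-special level-$0$ vertices are then absorbed into the distance-$1$ independent set using that $k < 3$ (from~(\ref{ineq-d0-og})) forces every two $q$-subsets of $\bar X$ to intersect.

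The critical step is choosing $f(A, 1)$ of measure $1$ inside the admissible region at level~$1$. Tracking the measure of this region inductively from the base inward, each ``even-odd round'' of the alternating scheme between levels $2d' - 1$ and $3$ contributes a factor $(k-1)^2$, reflecting the measure $\|f_e(\bar A)\| = k - 1$ encountered twice, and summing these contributions produces the geometric sum $\eps \sum_{j=0}^{d'-2} (k-1)^{2j+2}$ appearing in~(\ref{ineq-d0-bound}). The residual term $\eps(k-1+\eps)/(k+\eps)$ then captures the extra mass $\eps/p$ that $f_o(i)$ carries beyond $f_e(i)$, which becomes available at the last step when one must avoid $C_a$. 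The hypothesis~(\ref{ineq-d0-bound}) states exactly that this total is at least $1/(k+\eps) = \|f_e(A) \cap C_a\|$, the measure of the obstruction that must be ``replaced'' to keep $\|f(A,1)\| = 1$. The hardest part I anticipate is precisely this measure accounting at level~$1$, together with verifying that the iterative even-odd scheme remains coherent across all intermediate layers; the inequality~(\ref{ineq-d0-bound}) is engineered so that the resulting budget at level~$1$ is non-negative.
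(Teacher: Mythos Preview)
Your scheme has a genuine gap: the static alternation you describe does not accumulate any slack as you move from the base toward level~$1$, so it only recovers the case $d'=1$. Concretely, if every even layer $2,4,\dots,2d'$ is colored by $f_e(A)$ and every odd layer $3,\dots,2d'-1$ by some measure-$1$ subset of $f_o(A)$, then the constraint on a vertex $(A,1)$ is determined entirely by its neighbors at level~$2$ (colored $f_e(B)$ for $B\cap A=\varnothing$) and by the special vertex (colored $C_i$). The admissible region is therefore
\[
[0,k+\eps)\setminus\bigl(C_i\cup f_e([p]\setminus A)\bigr),
\]
whose measure is $\eps+(k-1)/(k+\eps)$, \emph{independently of $d'$}. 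Requiring this to be at least~$1$ gives exactly $\eps(k-1+\eps)\ge 1$, which is condition~(\ref{ineq-d0-bound}) for $d'=1$; for $d'\ge 2$ your argument proves nothing stronger. The phrase ``each even--odd round contributes a factor $(k-1)^2$'' has no mechanism behind it in your construction: nothing you do at layers $3,\dots,2d'-1$ affects what is available at layer~$1$.

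What the paper does, and what you are missing, is to \emph{exploit the independence of each layer $V_\ell$ to assign a common bonus set to all its vertices}. At layer~$\ell$ one colors $(A,\cdot)$ by a shrinking per-vertex piece $f_\ell(A)\subsetneq f_e(A)$ together with a set $h_j([p])$ or $g_j([p])\cup Y$ (depending on parity) that is identical for every vertex of $V_\ell$. The pieces removed from $f_e$ at one parity become the common bonus at the other parity; because removing from $q$ coordinates costs a factor $1/k$ while adding the union over all $p$ coordinates gains a factor~$1$, the bonus grows by a factor $k-1$ per step. This telescoping is precisely what produces the geometric sum $\sum_{j=0}^{d'-2}(k-1)^{2j+2}$ in~(\ref{ineq-d0-bound}). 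Separately, your claimed nesting $f_e(i)\subseteq f_o(i)$ is not provided by Proposition~\ref{prop-pseudorandom} and your one-line justification (``in expectation, hence exactly after averaging'') is not a proof; but this is secondary, since the paper's argument needs only $f_e$ and the leftover set $Y=[0,k+\eps)\setminus f_e([p])$, not $f_o$ at all.
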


\begin{proof}
  First observe that by Proposition~\ref{prop-monotone} we only need to
  consider the case that $\eps$ is the smallest positive real that
  satisfies inequality~\eqref{ineq-d0-bound}, i.e., that solves the
  equation
  \[\eps \sum_{j=0}^{d'-2} (k-1)^{2j+2} +
  \eps \cdot \frac{k-1+\eps}{k+\eps} = \frac{1}{k+\eps} \,.\]
  Furthermore, it is straightforward to show that any positive solution to
  this equation satisfies the following two inequalities as well:
  \begin{equation}
    \label{ineq-d0-aux1}
    \eps \sum_{j=0}^{d'-2} (k-1)^{2j} \le \frac{1}{k+\eps}
    \quad\text{and}\quad
    \eps \sum_{j=0}^{d'-2} (k-1)^{2j+1} \le \frac{k-1+\eps}{k+\eps}
    \,.
  \end{equation}
  These two inequalities will guarantee the existence of functions~$h_z$
  and~$g_y$, respectively, which we define later in the proof. Also note
  that the right inequality of~(\ref{ineq-d0-aux1}) is an immediate
  consequence of the left one.

  Now consider the universal graph $U_{p,q,d}^n$. Let $C_i$, for
  $i\in\Bigl[n\twvec{p\\q}\Bigr]$, be a precoloring of the special vertices
  and let~$f_e$ be a mapping as described in
  Proposition~\ref{prop-pseudorandom}. In what follows, for each ray $R_i$,
  which is isomorphic to $R_{p,q,2d'}$, we find a fractional coloring $c_i$
  that satisfies the following: for every set $A \in\twvec{[p]\\q}$, each
  vertex $v=(A,2d')$ of the base of $R_i$ is colored by the set $f_e(A)$,
  and the special vertex of $R_i$ is colored by $C_i$. Since the universal
  graph $U_{p,q,d}^n$ is constructed by identifying the vertices $(A,2d')$,
  the conclusion of the lemma follows from the existence of such a
  fractional coloring for each ray.

  Fix a ray $R_i$ and let $s$ be the special vertex of $R_i$. For an
  integer $\ell\in[2d'-1]$, let $V_\ell$ be the set of vertices of $R_i$ at
  distance $\ell$ from $s$, and let $V_{2d'}$ be the set of vertices of
  $R_i$ at distance at least $2d'$ from $s$. Observe that the sets
  $V_\ell$, $\ell=1,\ldots,2d'$, form a partition of
  $V(R_i)\setminus\{s\}$, and if a vertex $v=(A,\ell')$ of the ray~$R_i$ is
  in $V_\ell$, then $\ell' \le\ell$. In particular, the vertices of the
  base of $R_i$ form a subset of~$V_{2d'}$. By~\eqref{ineq-d0-og} and
  Proposition~\ref{prop-indep}, it follows that the set $V_\ell$ forms an
  independent set in $R_i$, for $\ell \in [2d'-1]$.

  The basic idea is to partition for each $V_\ell$ the interval
  $[0,k+\eps)$ into three parts. The first part will be split into $p$
  equal-size parts and will be assigned to vertices in $V_\ell$ according
  to the corresponding sets in the Kneser graph. The second part will be
  assigned to all vertices in $V_\ell$ (that is possible since $V_\ell$
  forms an independent set). The third part will not be used on the
  vertices of $V_\ell$ at all and will be reserved for the vertices in
  $V_{\ell-1}$. Based on the parity of $V_\ell$, either the second part
  will be inside $C_i$ and the third part will be disjoint from $C_i$, or
  vice versa. First we define the partition for $V_{2d'}$, and after
  defining the partition for some $V_{\ell}$, we define the partition for
  $V_{\ell-1}$. During this procedure, the sizes of the second and third
  parts will increase at the expense of the first part.

  Formally, we construct functions
  $f_x:[p] \hookrightarrow 2^{[0,k+\eps)}$,
  $g_y:[p] \hookrightarrow 2^{[0,k+\eps)}$ and
  $h_z:[p] \hookrightarrow 2^{[0,k+\eps)}$, for $x \in [2d']$,
  $y\in [d'-1]$ and $z\in [d'-1]$ in the following way. For $a \in [p]$ and
  $j = d'-1,d'-2,\dots,1$, we sequentially define:
  \begin{itemize}
  \item $g_{j}(a)$ as an arbitrary subset of $(f_e(a)\setminus C_i)
    \setminus \bigcup\limits_{j'=j+1}^{d'-1}g_{j'}(a)$\\[-2mm]
    \hspace*{\fill} of measure $\dfrac{\eps k}{p} (k-1)^{2(d'-j)-1}$,
    \vspace{-2mm}
  \item $h_{j}(a)$ as an arbitrary subset of $(f_e(a)\cap C_i) \setminus
    \bigcup\limits_{j'=j+1}^{d'-1}h_{j'}(a)$\\[-2mm]
    \hspace*{\fill} of measure $\dfrac{\eps k}{p} (k-1)^{2(d'-j)-2}$,
  \end{itemize}
  \vspace{-1mm}
  and then:
  \vspace{-1mm}
  \begin{itemize}
  \item $f_{2d'}(a):=f_e(a)$,
  \vspace{-1mm}
  \item $f_{2j+1}(a):=f_{2j+2}(a)\setminus h_j(a)$, and
  \vspace{-1mm}
  \item $f_{2j}(a):=f_{2j+1}(a)\setminus g_j(a)$.
  \end{itemize}
  Finally, we set $f_1(a):=f_2(a)\setminus C_i$ for every $a\in [p]$. Since
  the measure of~$f_e(a)$ is $1/q$ and the measure of $f_e(a) \cap C_i$ is
  $1/(p+q\eps)$, these functions exist if and only if
  the conditions~\eqref{ineq-d0-aux1} are satisfied. Next, we set $Y$ to be the
  set of measure $\eps$ that is disjoint from $f_e([p])$, i.e.,
  $Y:=[0,k+\eps)\setminus f_e([p])$. Observe that
  $\|Y\setminus C_i\|=\eps - \eps / (k+\eps)$. The described construction
  of the functions is sketched in Figure~\ref{fig-f-d0}.

  \begin{figure}
    \begin{center}
      \epsfxsize=135mm
      \epsfbox{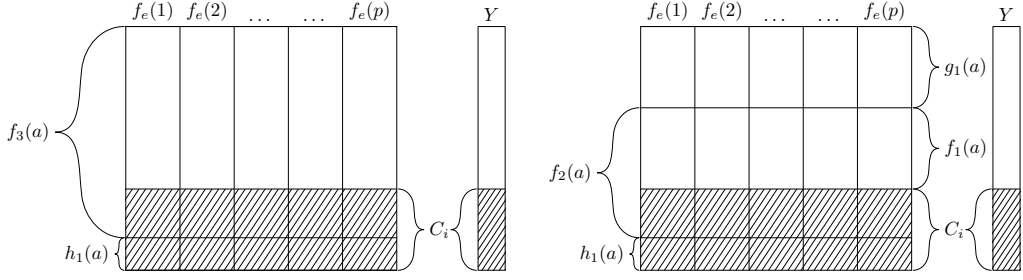}
    \end{center}
    \caption{The construction of a fractional coloring in
      Lemma~\ref{lemma-d0} for $d=8$.}
    \label{fig-f-d0}
  \end{figure}

  Let $\ell\in [2d']$ and $v=(A,\ell') \in V_\ell$. Recall that
  $\ell'\le\ell$. If $\ell$ is even, we set
  \begin{equation*}
    c_i(v):=f_\ell(A) \cup \bigcup_{j=\ell/2}^{d'-1} h_j([p])\,;
  \end{equation*}
  if $\ell\ge3$ is odd, we set
  \begin{equation*}
    c_i(v):=f_\ell(A) \cup \bigcup_{j={(\ell+1)/2}}^{d'-1} g_j([p]) \cup
    Y\,;
  \end{equation*}
  and for $\ell=1$ we set
  \begin{equation*}
    c_i(v):=f_1(A) \cup \bigcup_{j=1}^{d'-1} g_j([p]) \cup (Y\setminus
    C_i)\,.
  \end{equation*}
  Finally, we set $c_i(s):=C_i$.

  We claim that $\|c_i(v)\|\ge1$ for every vertex $v\in V(R_i)$. Indeed, if
  $v=s$, then the assertion immediately follows from $\|C_i\|=1$. Hence, in
  the remainder we may assume that~$v$ belongs to a set $V_\ell$ for some
  $\ell\in [2d']$. Observe that for a fixed $\ell\in [2d']$, the color sets
  of any two vertices $u$ and $v$ from $V_\ell$ have the same measure. Let
  $m_\ell$ be the measure of vertices in $V_\ell$. Then $m_{2d'}=1$, by the
  definition of $f_e$. If $d>4$, then $m_{2d'-1}=m_{2d'}$, since both $Y$
  and $h_{d'-1}(A)$, for $A \in {[p]\choose q}$, have measure $\eps$. Next,
  if $\ell \in \{3,5,\dots,2d'-3\}$, then
  \begin{equation*}
    m_{\ell} = m_{\ell+2} - \eps (k-1)^{2(d'-\lfloor \ell/2 \rfloor)-2} +
    (k-1) \cdot \eps (k-1)^{2(d'-\lceil \ell/2 \rceil)-1} =
    m_{\ell+2}\,.
  \end{equation*}
  Analogously, if $\ell \in \{2,4,\dots,2d'-2\}$, then
  \begin{equation*}
    m_{\ell} = m_{\ell+2} - \eps (k-1)^{2(d'-\ell/2)-1} + (k-1) \cdot \eps
    (k-1)^{2(d'-\ell/2)-2} = m_{\ell+2}\,.
  \end{equation*}
  Finally, for $m_1$ we have
  \begin{equation*}
    m_1={}1-\frac{1}{k+\eps} +
    (k-1) \cdot \eps \sum_{j=1}^{d'-1}(k-1)^{2j-1} + \eps -
    \frac{\eps}{k+\eps}\,,
  \end{equation*}
  which is at least one by \eqref{ineq-d0-bound}.

  It remains to check that the mapping $c_i$ assigns disjoint sets to any
  two adjacent vertices in $R_i$. Let $u=(A,\ell_u) \in V_{\ell_u}$ and
  $v=(B,\ell_v) \in V_{\ell_v}$ be two arbitrary adjacent vertices
  in~$R_i$. Hence, $A$ is disjoint from $B$ and without loss of generality
  $\ell_u \le \ell_v \le \ell_u + 1$. If $\ell_v = \ell_u$, then
  $\ell_v = 2d'$ (since for $\ell < 2d'$ the set $V_\ell$ is independent).
  Thus the sets $c_i(u)$ and $c_i(v)$ are disjoint, since $f_e(A)$ and
  $f_e(B)$ are disjoint.

  From now on, we assume that $\ell_v = \ell_u + 1$. If $\ell_v$ is even,
  then $c_i(v)$ is disjoint from~$Y$, and disjoint from $g_j([p])$ for any
  $j\in\{(\ell_u+1)/2,\ldots,d'-1\}$. Furthermore, $c_i(u)$ is disjoint
  from $h_j([p])$ for any $j\in\{\ell_v/2,\ldots,d'-1\}$. Analogously
  if~$\ell_v$ is odd and larger than one, then $c_i(v)$ is disjoint from
  $h_j([p])$ for any $j\in\{\ell_u/2,\ldots,d'-1\}$, and $c_i(u)$ is
  disjoint from~$Y$, and disjoint from $g_j([p])$ for any
  $j\in\{(\ell_v-1)/2,\ldots,d'-1\}$. Since $f_\ell(A)$ is a subset of
  $f_e(A)$ for any $\ell \in [2d']$ and $A\in\twvec{[p]\\q}$, the sets
  $c_i(u)$ and $c_i(v)$ are disjoint. Finally, the sets assigned to
  neighbors of $s$ are disjoint from $C_i$.

  We can conclude that the coloring $c_i$ is a fractional coloring of the
  ray $R_i$ with the required properties.
\end{proof}

Combining Lemma~\ref{lemma-d0} with Proposition~\ref{prop-homo} yields the
following theorem.

\begin{theorem}
  \label{thm-d0}
  Let $d$ be a positive integer such that $d\equiv0\mod4$, $k$ a rational
  and $\eps$ a positive real such that conditions \eqref{ineq-d0-og} and
  \eqref{ineq-d0-bound} are satisfied, where $d'=\lfloor d/4\rfloor$. If
  $G$ is a fractionally $k$-colorable graph and $W$ is a subset of its
  vertex set with pairwise distance at least $d$, then any fractional
  $(k+\eps)$-precoloring of $W$ can be extended to a fractional
  $(k+\eps)$-coloring of $G$.
\end{theorem}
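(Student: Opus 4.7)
The plan is to deduce Theorem~\ref{thm-d0} directly from Lemma~\ref{lemma-d0} together with Proposition~\ref{prop-homo}, using the standard ``pull back along a homomorphism'' principle for fractional colorings. No further combinatorial work should be required; the conditions \eqref{ineq-d0-og} and \eqref{ineq-d0-bound} are exactly the hypotheses of Lemma~\ref{lemma-d0}, so the role of the theorem is just to promote the universal-graph statement to an arbitrary fractionally $k$-colorable graph.

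In detail, first I would set $k=p/q$ with $p,q$ the integers provided by Proposition~\ref{prop-homo}, and fix a homomorphism $\varphi:G\to U^{|W|}_{p,q,d}$ that sends $W$ injectively onto a subset of the special vertices. Next, given any fractional $(k+\eps)$-precoloring $c_W$ of $W$, I would define a fractional $(k+\eps)$-precoloring $c^\ast$ of the special vertices of $U^{|W|}_{p,q,d}$ by copying: for each $w\in W$, let $c^\ast(\varphi(w)):=c_W(w)$; for the remaining special vertices, assign any measurable subset of $[0,k+\eps)$ of measure one (there is no constraint since these vertices are at pairwise distance at least $d\ge 3$ from each other and will be re-colored in the extension step anyway). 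Hypotheses \eqref{ineq-d0-og} and \eqref{ineq-d0-bound} allow Lemma~\ref{lemma-d0} to be applied to $c^\ast$, producing a fractional $(k+\eps)$-coloring $c_U$ of the whole universal graph that agrees with $c^\ast$ on the special vertices.

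I would then set $c(v):=c_U(\varphi(v))$ for every $v\in V(G)$. To check this is a fractional $(k+\eps)$-coloring of $G$: each $c(v)$ has measure one because $c_U(\varphi(v))$ does, and if $uv\in E(G)$ then $\varphi(u)\varphi(v)\in E(U^{|W|}_{p,q,d})$, so $c_U(\varphi(u))$ and $c_U(\varphi(v))$ are disjoint, hence $c(u)$ and $c(v)$ are disjoint. Finally, for $w\in W$ we have $c(w)=c_U(\varphi(w))=c^\ast(\varphi(w))=c_W(w)$, so $c$ extends the given precoloring.

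The only nonroutine point is the legitimacy of step two, namely that the precoloring of $W$ can be transported to a precoloring of the special vertices of the universal graph without creating a conflict with Lemma~\ref{lemma-d0}. This is clean because Proposition~\ref{prop-homo} guarantees that distinct vertices of $W$ map to distinct special vertices, and Lemma~\ref{lemma-d0} accepts an \emph{arbitrary} assignment of measure-one sets to all special vertices (not just those coming from $W$); so I am free to fill in the unused special vertices with any admissible sets. I do not expect any other obstacle.
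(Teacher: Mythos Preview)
Your proposal is correct and follows essentially the same route as the paper's proof: apply Proposition~\ref{prop-homo} to get a homomorphism to $U_{p,q,d}^{|W|}$, transport the precoloring to the special vertices, invoke Lemma~\ref{lemma-d0}, and pull the resulting coloring back along the homomorphism. Your version is in fact slightly more explicit than the paper's in that you spell out how to handle the special vertices not in the image of $W$ and why the pullback is a valid fractional coloring; the paper leaves these points implicit.
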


\begin{proof}
  Let $p$ and $q$ be integers such that $k=p/q$, and $h$ the homomorphism
  from~$G$ to~$U_{p,q,d}^{|W|}$ given by Proposition~\ref{prop-homo}.
  Precolor the vertices of $h(W)$ with the colors assigned to their
  preimages. Note that this is possible since $h$ restricted to $W$ is
  injective. Since the parameters $k$, $\eps$ and $d$ satisfy the
  conditions \eqref{ineq-d0-og} and \eqref{ineq-d0-bound},
  Lemma~\ref{lemma-d0} yields that there exists an extension of this
  precoloring of $U^n_{p,q,d}$ to a fractional $(k+\eps)$-coloring $c_U$
  of~$U_{p,q,d}^{|W|}$. Since $h$ is a homomorphism of $G$ to
  $U^n_{p,q,d}$, setting $c(v):=c_U(h(v))$ for all $v\in V(G)$ yields a
  fractional $(k+\eps)$-coloring of~$G$ that extends the given precoloring
  of $W$.
\end{proof}


\subsection{Lower bound for distance four}
\label{sect-lower-four}

We start this section with the following proposition about the size of the
neighborhood of an independent set in a Kneser graph.

\begin{proposition}
  \label{prop-exp}
  Let $p$ and $q$ be positive integers, $p/q\ge2$. If $I$ is an independent
  set of the Kneser graph $K_{p/q}$, then
  $|N(I)| \ge \dfrac{p-q}{q}\cdot|I|\,$.
\end{proposition}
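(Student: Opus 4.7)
The plan is spectral. The Kneser graph $K_{p/q}$ is $d$-regular with $d = \binom{p-q}{q}$, and its eigenvalues are $\lambda_j = (-1)^j\binom{p-q-j}{q-j}$ for $j=0,1,\ldots,q$. Since $p \ge 2q$, consecutive ratios $|\lambda_{j+1}|/|\lambda_j| = (q-j)/(p-q-j) \leq 1$, so among $j \geq 1$ the largest magnitude is $\tau = \binom{p-q-1}{q-1}$. Observing that $d/\tau = (p-q)/q$, the claim reduces to proving $|N(I)| \geq (d/\tau)|I|$.

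First, I would count edges between $I$ and $N(I)$: since $I$ is independent, $\sum_{v \in N(I)} d_I(v) = d|I|$, where $d_I(v)$ denotes the number of neighbors of $v$ in $I$. Cauchy--Schwarz then yields
\begin{equation*}
  |N(I)| \;\geq\; \frac{(d|I|)^2}{\sum_v d_I(v)^2} \;=\; \frac{d^2|I|^2}{\|A\mathbf{1}_I\|^2},
\end{equation*}
where $A$ is the adjacency matrix of $K_{p/q}$ and $\mathbf{1}_I$ the indicator vector of $I$. It therefore suffices to establish that $\|A\mathbf{1}_I\|^2 \leq d\tau|I|$.

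To bound $\|A\mathbf{1}_I\|^2$, I would decompose $\mathbf{1}_I = (|I|/n)\mathbf{1} + \mathbf{w}$ with $\mathbf{w} \perp \mathbf{1}$ and $n = \binom{p}{q}$, so that $\|\mathbf{w}\|^2 = |I|(1-|I|/n)$. Because $\mathbf{w}$ lies in the span of eigenvectors whose eigenvalues have absolute value at most $\tau$,
\begin{equation*}
  \|A\mathbf{1}_I\|^2 \;\leq\; \frac{d^2|I|^2}{n} + \tau^2|I|\bigl(1-|I|/n\bigr).
\end{equation*}
Finally, Hoffman's ratio bound for independent sets (which for $K_{p/q}$ coincides with the Erd\H{o}s--Ko--Rado theorem) gives $|I| \leq \tau n/(d+\tau)$; substituting and rearranging (dividing by $d-\tau$ when $d>\tau$) converts the right-hand side into $d\tau|I|$, as required. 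The borderline case $p = 2q$ deserves a separate line: there $K_{p/q}$ is a perfect matching with $d=\tau=1$, and the claim $|N(I)| \geq |I|$ holds with equality.

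The main obstacle is simply assembling the spectral ingredients, namely the full list of eigenvalues of the Kneser graph together with Hoffman's bound; once those are in hand, the Cauchy--Schwarz step and the algebraic manipulation reducing matters to Hoffman's inequality are elementary. No combinatorial analysis of the intersecting family $I$ itself is needed, which is what makes the argument ``simple'' in the sense alluded to in the introduction.
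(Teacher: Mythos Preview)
Your proof is correct and follows essentially the same spectral route as the paper: both arguments combine the expansion inequality for regular graphs (Hoory--Linial--Wigderson, Theorem~4.15) with the Erd\H{o}s--Ko--Rado bound on independent sets in $K_{p/q}$. The only difference is packaging: the paper quotes the expansion bound $|N(I)|\ge |I|/\bigl((1-c)\lambda_2^2+c\bigr)$ as a black box, whereas you re-derive the identical inequality via Cauchy--Schwarz on $\|A\mathbf{1}_I\|^2$, and you phrase the independent-set bound as Hoffman's ratio bound rather than Erd\H{o}s--Ko--Rado (for $K_{p/q}$ these coincide and both give $|I|\le\tau n/(d+\tau)=qn/p$).
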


\begin{proof}
  Let $n=\twvec{p\\q}$ and $A=A(K_{p/q})$ be the normalized adjacency
  matrix of the Kneser graph $K_{p/q}$. This is the $n\times n$ matrix
  indexed by vertices of $K_{p/q}$ such that if $\{u,v\}$ is an edge of
  $K_{p/q}$, the entry corresponding to $(u,v)$ is equal to the inverse of
  the degree of~$u$, i.e., equal to $\twvec{p-q\\q}^{-1}$, while all other
  entries are zero. If $\lambda_1,\lambda_2,\dots,\lambda_n$ are the
  eigenvalues of $A$ such that
  $|\lambda_1| \ge |\lambda_2| \ge \dots \ge |\lambda_n|$, then it follows
  that $|\lambda_2| = \dfrac{q}{p-q}\,$, see~\cite{bib-spektrum}.

  A standard expansion inequality (see, e.g., \cite[Theorem
  4.15]{bib-wigderson}) asserts that
  \begin{equation}\label{eq-exp}
    |N_G(I)| \ge \frac{|I|}{(1-c)(\lambda_2)^2 + c}\,,
  \end{equation}
  for every vertex subset $I$ of a graph $G$ of size at~most $cn$, where
  $n$ is the number of vertices of $G$. If $I$ is an independent set of the
  Kneser graph $K_{p/q}$, then by the Erd\H{o}s-Ko-Rado Theorem (see, e.g.,
  \cite{bib-jukna}), the size of $I$ is at most $\twvec{p-1\\q-1}$.
  Therefore, $|I|/n \le q/p$, where $n=\twvec{p\\q}$, and hence $|N(I)| \ge
  \dfrac{p-q}{q}\cdot|I|$ by~\eqref{eq-exp}.
\end{proof}

Proposition~\ref{prop-exp} has a key role in proving that in any fractional
$k$-coloring of $K_{p/q}$, where $k\ge p/q$, there is a vertex $v$ such
that the union of sets assigned to the neighborhood of~$v$ has measure at
least $p/q-1$. Note that this statement is trivial if $p/q\ge3$, because in
that case the neighborhood of any vertex of $K_{p/q}$ is isomorphic to
$K_{p/q-1}$.

\begin{lemma}
  \label{lemma-nb}
  For every real $\eps\ge0$, all positive integers $p$ and $q$, where
  $p/q\ge2$, and any fractional $(p/q+\eps)$-coloring
  $c:V(K_{p/q}) \to 2^{[0,p/q+\eps)}$ of $K_{p/q}$, there exists a vertex
  $v\in V(K_{p/q})$ such that $\|c(N(v))\| \ge p/q - 1$.
\end{lemma}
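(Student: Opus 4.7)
The plan is a double-counting/averaging argument that feeds into Proposition~\ref{prop-exp}. Write $k=p/q$ and $n=\binom{p}{q}$. For each colour $x\in[0,k+\eps)$, define
\[
S_x := \{\,v\in V(K_{p/q}) : x\in c(v)\,\}.
\]
Since $c$ is a proper fractional colouring, adjacent vertices receive disjoint colour sets; therefore no two elements of $S_x$ are adjacent, i.e.\ $S_x$ is an independent set of $K_{p/q}$. This is the only structural input I need about the colouring before appealing to Proposition~\ref{prop-exp}.

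Next I will evaluate the average of $\|c(N(v))\|$ over $v\in V(K_{p/q})$. For a fixed $v$, note that $x\in c(N(v))$ iff some neighbour of $v$ carries colour $x$, i.e.\ iff $N(v)\cap S_x\ne\varnothing$, which in turn is equivalent to $v\in N(S_x)$. Writing the measure as an integral of an indicator and swapping the finite sum with the integral via Fubini gives
\[
\sum_{v\in V(K_{p/q})}\|c(N(v))\| \;=\; \int_0^{k+\eps}\bigl|\{v:N(v)\cap S_x\ne\varnothing\}\bigr|\,dx \;=\; \int_0^{k+\eps}|N(S_x)|\,dx.
\]
By the same swap,
\[
\int_0^{k+\eps}|S_x|\,dx \;=\; \sum_{v\in V(K_{p/q})}\|c(v)\| \;=\; n.
\]

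Now I would apply Proposition~\ref{prop-exp} pointwise to the independent set $S_x$ to get $|N(S_x)|\ge\frac{p-q}{q}|S_x|$ for every $x\in[0,k+\eps)$. Integrating over $x$,
\[
\sum_{v\in V(K_{p/q})}\|c(N(v))\| \;\ge\; \frac{p-q}{q}\int_0^{k+\eps}|S_x|\,dx \;=\; \frac{p-q}{q}\cdot n \;=\; (k-1)\,n.
\]
Dividing by $n$ shows that the average of $\|c(N(v))\|$ is at least $k-1=p/q-1$, so at least one vertex $v$ attains this bound, which is exactly the statement of the lemma.

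There is no real obstacle: the argument is a one-line averaging once one observes that the level sets $S_x$ are independent. The slight subtlety is justifying the swap of sum and integral, but since the sums are finite and each $c(v)$ is measurable, $x\mapsto|S_x|$ and $x\mapsto|N(S_x)|$ are measurable, and Fubini applies without issue. The real content of the lemma is pushed entirely onto Proposition~\ref{prop-exp}, which in turn rests on the eigenvalue expansion inequality for Kneser graphs.
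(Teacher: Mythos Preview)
Your proof is correct and follows essentially the same approach as the paper's: both define the colour-level sets $S_x$ (the paper writes $V_x$), note they are independent, apply Proposition~\ref{prop-exp} to bound $|N(S_x)|\ge\frac{p-q}{q}|S_x|$, and average $\|c(N(v))\|$ over all vertices. The only difference is presentational---the paper discretises your integral by partitioning $[0,k+\eps)$ into the sets $X_i^j=\{x:|V_x|=i,\ |N(V_x)|=j\}$ before summing, whereas you integrate directly; the underlying computation is identical.
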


\begin{proof}
  For $x\in[0,p/q+\eps)$, let $V_x \subseteq V(K_{p/q})$ be the set of
  vertices of $K_{p/q}$ that contain~$x$ in their color set, i.e.,
  $V_x = \{\,v\in V: x \in c(v)\,\}$. For $i \ge 1$ we define
  $X_i := \{\,x \in [0,p/q+\eps): |V_x|=i\,\}$. In other words, $X_i$ are
  the points in $[0,k+\eps)$ contained in exactly~$i$ color sets~$c(v)$.
  Note that for $i>\twvec{p-1\\q-1}$ the set $X_i$ is empty and that
  $\sum\limits_{i\ge1} i\cdot \|X_i\|=\twvec{p\\q}$. Next, let $X^j$ be the
  set of points $x \in [0,k+\eps)$ such that the number of vertices $v$
  that have at least one neighbor $u$ with $x \in c(u)$ is equal to $j$. In
  other words, let $X^j := \{\,x \in [0,p/q+\eps): |N(V_x)|=j\,\}$.

  Finally, consider all the intersections of $X_i$ with $X^j$, where
  $i \in \Bigl[\twvec{p-1\\q-1}\Bigr]$ and $j\in \Bigl[\twvec{p\\q}\Bigr]$,
  and let $X_i^j := \{\,x \in [0,p/q+\eps): |V_x|=i\;\text{and}\;
  |N(V_x)|=j\,\}$. Note that for a fixed~$j$ the sets $X_i^j$ form a
  partition of the set $X^j$, where for some values of $i$ the part $X_i^j$
  might be empty. Since for any $x$ the set $V_x$ forms an independent set
  in $K_{p/q}$, Proposition~\ref{prop-exp} yields that if $j<\frac{p-q}{q}
  \cdot i$, then $X_i^j$ is empty. Now, for a vertex $v\in V$, consider the
  measure of points $x \in [0,p/q+\eps)$ such that $x$ is contained in the
  color set of at least one neighbor of $v$. By a double counting argument
  it follows that
  \begin{equation*}
    \sum_{v\in V}{\|c(N(v))\|} =
    \sum_{j=1}^{p\choose q\vphantom{1}} j \cdot \|X^j\| =
    \sum_{j=1}^{p\choose q\vphantom{1}} \sum_{i=1}^{p-1\choose q-1}
    j \cdot \|X_i^j\| =
    \sum_{i=1}^{p-1\choose q-1} \sum_{j=1}^{p\choose q\vphantom{1}}
    j \cdot \|X_i^j\| \,.
  \end{equation*}
  Since the sets $X_i^j$ are empty for $j<\frac{p-q}{q} \cdot i$,
  we conclude that
  \begin{equation*}
    \sum_{v\in V}{\|c(N(v))\|} \ge
    \sum_{i=1}^{p-1 \choose q-1}\frac{p-q}{q}
    \cdot i \cdot \|X_i\| = \frac{p-q}{q}\cdot\twvec{p\\q}\,.
  \end{equation*}
  Therefore, there exists a vertex $v \in V$ such that
  $\|c(N(v))\| \ge p/q-1$.
\end{proof}

We are now ready to prove that the upper bound on $g(k,4)$ for $k\in[2,3)$
given in Theorem~\ref{thm-d0} is best possible. The proof uses the same
precoloring as was used in~\cite{bib-extenze} for a lower bound in the case
$k\in \{2\}\cup[3,\infty)$, but the argument for $k\in (2,3)$ is
considerably more involved.

\begin{theorem}
  \label{thm-lb-d4}
  Let $k \in [2,3)$ be a rational and $\eps$ a positive real such
  that $\eps < \dfrac{1+\eps}{k+\eps}\,$. There exist a graph $G$
  with fractional chromatic number $k$, a subset $W$ of its vertex set at
  pairwise distance at least four and a fractional $(k+\eps)$-precoloring
  of $W$ that cannot be extended to a fractional $(k+\eps)$-coloring of
  $G$.
\end{theorem}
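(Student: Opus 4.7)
The plan is to take $G := U_{p,q,4}^1$ with $k = p/q$ in lowest terms, together with the same precoloring of the special vertices as was used in~\cite{bib-extenze} to establish the lower bound for $k \in \{2\} \cup [3,\infty)$. By Proposition~\ref{prop-universal}, $G$ has fractional chromatic number $k$ and its special vertices form a subset $W$ at pairwise distance at least four, so it suffices to show that this precoloring $\{C_X\}_{X \in \binom{[p]}{q}}$ does not extend to a fractional $(k+\eps)$-coloring of $G$.

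Assume, toward a contradiction, that an extension $c$ exists, and let $c_B$ denote its restriction to the shared base copy of $K_{p/q}$. Since $c_B$ is itself a fractional $(k+\eps)$-coloring of $K_{p/q}$, Lemma~\ref{lemma-nb} yields a base vertex $B^* \in \binom{[p]}{q}$ with $\|c_B(N(B^*))\| \ge k-1$, so that the ``free region'' $F := [0,k+\eps) \setminus c_B(N(B^*))$ has measure at most $1+\eps$. For every $X \in \binom{[p]}{q}$ disjoint from $B^*$, the middle vertex $(B^*, 1)$ of the ray $R_{p,q,2}^X$ is adjacent both to the ray's special vertex $(X,0)$ (because $B^* \cap X = \emptyset$) and to every base vertex in $N(B^*)$; its color, a measure-$1$ subset of $[0, k+\eps)$, must therefore lie in $F \setminus C_X$. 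This produces the family of inequalities
\[\|F \cap C_X\| \le \|F\| - 1 \le \eps \quad\text{for every $X \in \binom{[p]}{q}$ with $X \cap B^* = \emptyset$.}\]

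To extract the tight quadratic threshold $\eps(k-1+\eps) \ge 1$, I would sum these inequalities over all $X$ disjoint from $B^*$ and exploit the explicit structure of the sets $\{C_X\}$ via a double-counting identity, then combine the result with a complementary estimate---coming either from a second application of Lemma~\ref{lemma-nb} to a different substructure of $c$ or from the pseudorandom profile guaranteed by Proposition~\ref{prop-pseudorandom}---that tracks how the measure-$1$ set $c_B(B^*) \subseteq F$ distributes across the sets $\{C_X\}$. The resulting chain of linear inequalities in the unknowns $\|F \cap C_X\|$, $\|F\|$, and $\|c_B(B^*) \cap C_X\|$ should be feasible only when $\eps(k-1+\eps) \ge 1$, contradicting the hypothesis $\eps < (1+\eps)/(k+\eps)$.

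The main obstacle is precisely this quantitative step. For $k \in \{2\} \cup [3, \infty)$, the rigidity of $K_{p/q}$ (clique structure or a clean partition of the color space) reduces the argument essentially to a single averaging identity, as in~\cite{bib-extenze}. For $k \in (2,3)$, by contrast, Proposition~\ref{prop-indep} shows that the distance-$1$ neighborhood of a special vertex inside each ray is an \emph{independent} set, which in turn permits base colorings $c_B$ that deviate substantially from the canonical partition-based one; ruling all of them out requires simultaneously controlling the whole two-parameter family $\{\|F \cap C_X\|\}$ and matching it against the Proposition~\ref{prop-pseudorandom} profile, which is what makes the argument ``considerably more involved''.
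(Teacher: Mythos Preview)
Your proposal is not a proof: you stop precisely at the point where the argument has to be completed, and your speculation about how to finish is off-target in two respects.

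First, the choice $G=U_{p,q,4}^{1}$ is too small. With a single ray per special-vertex type you have no freedom to pick a favourable precoloring once the base vertex $B^*$ is fixed; the ``summing over all $X$ disjoint from $B^*$'' that you propose, with the canonical precoloring $C_X=\bigcup_{i\in X}[(i-1)/q,i/q)$, only yields $(1-\eps)\binom{p-q}{q}\le\binom{p-q-1}{q-1}(k-1)$, which simplifies to $\eps\ge0$ and gives no contradiction. The paper instead takes $G=U_{p,q,4}^{n}$ with $n=\binom{p'}{q}$, where $p'/q$ is a rational in $[k+\eps,k+\eps_0)$, and precolors the $n$ copies of each special vertex with \emph{all} sets $f(X)=\bigcup_{i\in X}[(i-1)/q,i/q)$ for $X\in\binom{[p']}{q}$.

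Second, once this richer construction is in place, the finishing step is a single averaging identity, not the two-parameter feasibility analysis you anticipate, and Proposition~\ref{prop-pseudorandom} plays no role (it is a tool for upper bounds, not lower bounds). After Lemma~\ref{lemma-nb} produces $v$ in the base $H$ with $\|c(N_H(v))\|\ge k-1$, fix one neighbour $u$ of $v$, say $u=[q]$. Because every point of $[0,k+\eps')$ lies in exactly $\binom{p'-1}{q-1}$ of the sets $f(X)$, averaging over the $n$ rays with special vertex $([q],0)$ gives some $X\in\binom{[p']}{q}$ with $\|C\cap f(X)\|\le\frac{q}{p'}\|C\|$, where $C=c(N_H(v))$. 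In that ray the vertex $(v,1)$ is adjacent both to the special vertex (colored $f(X)$) and to $N_H(v)$, so
\[
\|c(N(v_1))\|\ \ge\ \|C\|+1-\|C\cap f(X)\|\ \ge\ k-\frac{k-1}{k+\eps'}\, ,
\]
and $k+\eps'-\|c(N(v_1))\|<1$ follows directly from $\eps'<\eps_0$. The ``considerably more involved'' part, relative to $k\ge3$, is entirely absorbed by Lemma~\ref{lemma-nb} and the expansion estimate behind it; after that lemma, the lower-bound proof is no harder than in the $k\ge3$ case.
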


\begin{proof}
  Let $\eps_0$ be the positive root of the equation
  $x = \dfrac{1+x}{k+x}\,$, i.e., let
  \begin{equation*}
    \eps_0:=\frac{1-k+\sqrt{(k-1)^2+4}}{2}\,.
  \end{equation*}
  Next, let $p',q$ be positive integers such that
  $k+\eps \le p'/q < k+\eps_0$ and $kq$ an integer. Set $\eps' := p'/q-k$,
  $p := kq$ and $G := U_{p,q,4}^n$, where $n=\twvec{p'\\q}$. We will show
  the existence of a $(k+\eps')$-precoloring of the special vertices of $G$
  that cannot be extended to a $(k+\eps')$-coloring of $G$. This implies
  that there exists also a $(k+\eps)$-precoloring of the special vertices
  that cannot be extended to a $(k+\eps)$-coloring of $G$ by
  Proposition~\ref{prop-monotone}. Since the special vertices of~$G$ are at
  pairwise distance at least four, the statement of the theorem immediately
  follows.

  Let $f:[p']\hookrightarrow 2^{[0,k+\eps')}$ be the function
  $f(i)=[(i-1)/q,i/q)$, for $i\in[p']$. Consider a precoloring of $G$ that
  assigns to the $n$ special vertices of the copies of $R_{p,q,2}^Y$, where
  $Y \in\twvec{[p]\\q}$, all the $n$ different sets $f(X)$, for
  $X\in\twvec{[p']\\q}$. We claim that this fractional
  $(k+\eps')$-precoloring cannot be extended to a fractional coloring of
  the whole graph.

  Suppose for contradiction that there exists an extension of the
  precoloring given by $f$ to a fractional coloring
  $c:V(G)\to 2^{[0,k+\eps')}$. Let $H$ be the base of $G$ (that is, the
  common bases of all rays). Since $H$ is isomorphic to $K_{p/q}$,
  Lemma~\ref{lemma-nb} implies that there exists a vertex $v \in V(H)$ with
  $\|c(N_H(v))\| \ge k - 1$. Let $C:=c(N_H(v))$ and let $u$ be an arbitrary
  neighbor of $v$ in $H$; without loss of generality~$u$ is the vertex
  corresponding to the vertex $([q],2)$ in each ray of $U_{p,q,4}^n$.

  Now consider all the rays $S^{[q]}_{p,q,2}$ in $U_{p,q,4}$; by the
  definition of $f$, for any $X\in\twvec{[p']\\q}$ there is a ray where the
  special vertex $[q]$ is precolored with $f(X)$. Since each point of
  $[0,p'/q)$ is contained in exactly $\twvec{p'-1\\q-1}$ sets $f(X)$, a
  double counting argument yields that
  \begin{equation*}
    \|C\| = \frac1{\twvec{p'-1\\q-1}} \sum_{X\in{[p']\choose q}} \|C \cap
    f(X)\|\,.
  \end{equation*}
  Therefore, there exists $X\in\twvec{[p']\\q}$ such that
  $\|C \cap f(X)\| \le\dfrac{q}{p'} \|C\| $. Consider the corresponding ray
  $S$ with the special vertex $[q]$ precolored by $f(X)$, and let $v_1$ be
  the vertex $(v,1)$ in $S$. Observe that the neighborhood of $v_1$ in $G$
  contains $N_H(v) \cup \{s\}$, where $s$ is the special vertex of $S$.
  Therefore,
  \begin{equation*}
    \| c(N(v_1))\| \ge \|C\| + 1 - \|C \cap f(X) \| \ge k -
    \frac{q}{p'}(k-1)=k-\frac{k-1}{k+\eps'}\,.
  \end{equation*}
  Since $0 < \eps' < \eps_0$, it follows that $\eps' < (1+\eps')/(k+\eps')$
  and hence
  \begin{equation*}k+\eps' - \| c(N(v_1))\| < 1\,.
  \end{equation*}
  This implies that $c(v_1)$ intersects $c(N(v_1))$, a contradiction.
\end{proof}


\section{Distances congruent to two mod four}
\label{sect-d2}


\subsection{Upper bounds}
\label{sect-upper-d2}

We start this section with showing upper bounds for $g(k,d)$, for $d\equiv2
\mod 4$ such that $k$ and $d$ satisfy $k < 2 + \dfrac2{d-2}\,$. The
construction of the colorings for this choice of $k$ and $d$ is similar to
the one in~Lemma~\ref{lemma-d0}. However, since the parity of the length of
the rays in~$U^n_{p,q,d}$ is different, we need to swap the order in which
we define the functions $g_y$ and $h_z$ when we go from the base of a ray
to its special vertex.

\begin{lemma}
  \label{lemma-d2}
  Let $\eps$ be a positive real and $n$, $p$, $q$ and $d$ positive integers
  such that $d\ge6$, $d\equiv2 \mod 4$ and $p/q\ge2$. If the conditions
  \begin{gather}
    \label{ineq-d2-og}
    2\le k < 2+\frac{1}{2d'} \quad\text{and}\\
    \label{ineq-d2-bound}
    \eps \sum_{j=0}^{d'-1} (k-1)^{2j+1} \ge \frac{1}{k+\eps}
  \end{gather}
  are satisfied, where $d'=\lfloor d/4\rfloor$ and $k=p/q$, then any
  fractional $(k+\eps)$-precoloring of the special vertices of
  $U_{p,q,d}^n$ can be extended to a fractional $(k+\eps)$-coloring of
  $U_{p,q,d}^n$.
\end{lemma}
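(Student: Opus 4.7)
The plan is to mirror the proof of Lemma~\ref{lemma-d0}, swapping the order in which the ``inside-$C_i$'' and ``outside-$C_i$'' pieces are peeled off, so as to accommodate the fact that in $U_{p,q,d}^n$ the rays $R_i\cong R_{p,q,2d'+1}$ now have odd length with the base at an odd level. First, by Proposition~\ref{prop-monotone} I would reduce to the smallest positive~$\eps$ satisfying \eqref{ineq-d2-bound}, namely the unique solution of $\eps\sum_{j=0}^{d'-1}(k-1)^{2j+1}=1/(k+\eps)$. At such $\eps$, multiplying by $k-1$ gives $\eps\sum_{j=0}^{d'-1}(k-1)^{2j}=1/((k-1)(k+\eps))$, and the elementary inequality $(k-1)(k-1+\eps)\ge1$, immediate from $k\ge2$, converts this into the auxiliary bound $\eps\sum_{j=0}^{d'-1}(k-1)^{2j}\le(k-1+\eps)/(k+\eps)$, which together with the equality above is exactly what is needed for the functions $h_j(a)$ and $g_j(a)$ below to fit.

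Applying Proposition~\ref{prop-pseudorandom} to obtain $f_e$, I would build the coloring ray by ray. In a single ray $R_i$, pick, for $j\in\{0,\ldots,d'-1\}$, pairwise disjoint sets $h_j(a)\subseteq f_e(a)\cap C_i$ of measure $\eps(k-1)^{2j+1}/q$ and pairwise disjoint sets $g_j(a)\subseteq f_e(a)\setminus C_i$ of measure $\eps(k-1)^{2j}/q$; equality in \eqref{ineq-d2-bound} forces the $h_j(a)$'s to partition $f_e(a)\cap C_i$ exactly. Set $f_{2d'+1}(a):=f_e(a)$ and, descending toward the special vertex, let the $t$-th transition (from level $2d'+2-t$ down to $2d'+1-t$, for $t=1,\ldots,2d'$) remove $g_{(t-1)/2}(a)$ if $t$ is odd and $h_{t/2-1}(a)$ if $t$ is even. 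In particular the very first removed piece is $g_0$, of measure $\eps/q$, matching $\|Y\|=\eps$; this is the swap relative to Lemma~\ref{lemma-d0}, which begins with an $h$-piece.

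The coloring I would define is $c_i(A,\ell):=f_\ell(A)\cup E_\ell$ with $c_i(s):=C_i$, $E_{2d'+1}:=\varnothing$, $E_{2d'-2j}:=Y\cup\bigcup_{j'=0}^{j-1}h_{j'}([p])$ on the even levels for $j\in\{0,\ldots,d'-1\}$, and $E_{2d'-1-2j}:=\bigcup_{j'=0}^{j}g_{j'}([p])$ on the odd levels for $j\in\{0,\ldots,d'-1\}$. A short telescoping computation using that the $t$-th removal has measure $\eps(k-1)^{t-1}$ per $A$ then shows $\|c_i(v)\|=1$ at every level; moreover the equality case of \eqref{ineq-d2-bound} yields $f_1(a)\cap C_i=\varnothing$, so $c_i(v)\cap C_i=\varnothing$ for every $v$ at distance one from $s$. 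Disjointness of adjacent colors reduces, as in Lemma~\ref{lemma-d0}, to three standard checks: $f_\ell(A)$ and $f_{\ell\pm1}(B)$ both lie in the disjoint pieces $f_e(A)$ and $f_e(B)$ when $A\cap B=\varnothing$; every pool piece $h_{j'}([p])$ or $g_{j'}([p])$ appearing in $E_\ell$ has already been peeled off from $f_{\ell\pm1}(B)$ by an earlier transition; and the pools $h_{j'}([p])$, $g_{j'}([p])$, $Y$ live in pairwise disjoint portions of $[0,k+\eps)$.

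The main obstacle is pinning down the first transition correctly: adjacency of the odd base to level~$2d'$ forces $c_i(B,2d')\setminus f_e(B)$ to avoid all of $f_e([p])$, so it must live inside $Y$, of measure only $\eps$; this in turn forces the first removed piece to have measure at most~$\eps$, and $g_0$ is the only candidate, which rules out starting with an $h$ as in Lemma~\ref{lemma-d0}. Propagating this swap through the alternating removals produces the exponent pattern of \eqref{ineq-d2-bound}, shifted by one relative to \eqref{ineq-d0-bound}.
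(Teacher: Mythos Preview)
Your proposal is correct and follows essentially the same route as the paper's proof: peel off alternating pieces $g_j\subseteq f_e(a)\setminus C_i$ and $h_j\subseteq f_e(a)\cap C_i$ (in the swapped order relative to Lemma~\ref{lemma-d0}), pool the removed pieces at lower levels together with $Y$ on the even side, and verify measure-one and disjointness level by level. The only cosmetic difference is that you index from $0$ and keep an explicit final piece $h_{d'-1}$ at the transition from level~$2$ to level~$1$, whereas the paper indexes from $1$, uses only $d'-1$ pieces $h_j$, and at the last step simply sets $f_1(a):=f_2(a)\setminus C_i$; in the equality case these coincide, exactly because your $h_j$'s partition $f_e(a)\cap C_i$.
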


\begin{proof}
  As in the proof of Lemma~\ref{lemma-d0}, we only need to consider the
  case that $\eps$ is the positive solution of
  \begin{equation*}
    \eps \sum_{j=0}^{d'-1} (k-1)^{2j+1} = \frac{1}{k+\eps}\,.
  \end{equation*}
  Any such solution also satisfies the following two inequalities:
  \begin{equation}
    \label{ineq-d2-aux1}
    \eps \sum_{j=0}^{d'-2} (k-1)^{2j+1} \le \frac{1}{ k+\eps}
    \quad\text{and}\quad
    \eps \sum_{j=0}^{d'-1} (k-1)^{2j} \le \frac{k - 1 +\eps}{k+\eps}
    \,.
  \end{equation}

  For the universal graph $U_{p,q,d}^n$, let $C_i$, for $i\in
  \Bigl[n\twvec{p\\q}\Bigr]$, be a precoloring of the special vertices and
  let~$f_e$ be a mapping as described in
  Proposition~\ref{prop-pseudorandom}. Analogously to the proof of
  Lemma~\ref{lemma-d0}, for each ray $R_i$ we find a fractional coloring
  $c_i$ that satisfies the following: for every set $A \in\twvec{[p]\\q}$,
  each vertex $v=(A,2d'+1)$ of the base of $R_i$ is colored by the set
  $f_e(A)$, and the special vertex of $R_i$ is colored by $C_i$.

  Fix a ray $R_i$ and let $s$ be the special vertex of $R_i$. For an
  integer $\ell \in [2d']$, let $V_\ell\subseteq V(R_i)$ be the set of
  vertices at distance $\ell$ from $s$, and let $V_{2d'+1}$ be the set of
  vertices of $R_i$ at distance at least $2d'+1$ from $s$. Similarly to the
  proof of Lemma~\ref{lemma-d0}, the vertices of the base of $R_i$ form a
  subset of $V_{2d'+1}$, and the set $V_\ell$ forms an independent set in
  $R_i$, for $\ell\in[2d']$.

  We construct functions $f_x:[p] \hookrightarrow 2^{[0,k+\eps)}$,
  $g_y:[p] \hookrightarrow 2^{[0,k+\eps)}$ and
  $h_z:[p] \hookrightarrow 2^{[0,k+\eps)}$, for $x \in [2d'+1]$,
  $y\in [d']$ and $z\in [d'-1]$ as follows. For $a \in [p]$ and
  $j=d'-1,d'-2,\dots\,1$ we sequentially define:
  \vspace{-1mm}
  \begin{itemize}
  \item $h_{j}(a)$ as an arbitrary subset of $(f_o(a)\cap C_i) \setminus
    \bigcup\limits_{j'=j+1}^{d'-1}h_{j'}(a)$\\[-2mm]
    \hspace*{\fill} of measure $\dfrac{\eps k}{p}(k-1)^{2(d'-j)-1}$,
  \end{itemize}
  \vspace{-1mm}
  Next, we sequentially define for $a\in[p]$ and $m=d',d'-1,\dots,1$
  \vspace{-1mm}
  \begin{itemize}
  \item $g_{m}(a)$ as an arbitrary subset of $(f_e(a)\setminus C_i)
    \setminus \bigcup\limits_{m'=m+1}^{d'}g_{m'}(a)$\\[-2mm]
    \hspace*{\fill} of measure $\dfrac{\eps k}{p}(k-1)^{2(d'-m)}$,
  \end{itemize}
  \vspace{-1mm}
  and then:
  \vspace{-1mm}
  \begin{itemize}
  \item $f_{2d'+1}(a):=f_e(a)$,
  \vspace{-1mm}
  \item $f_{2m+1}(a):=f_{2m+2}(a)\setminus h_m(a)$ for $m < d'$, and
  \vspace{-1mm}
  \item $f_{2m}(a):=f_{2m+1}(a)\setminus g_m(a)$.
  \end{itemize}
  Finally, we set $f_1(a):=f_2(a)\setminus C_i$ and
  $Y:=[0,k+\eps)\setminus f_e([p])$. Similarly as in the proof of
  Lemma~\ref{lemma-d0}, such functions exist if and only if conditions
  \eqref{ineq-d2-aux1} are satisfied. The described construction of the
  functions is sketched in Figure~\ref{fig-f-d2}.

  \begin{figure}
    \begin{center}
      \epsfxsize=80mm
      \epsfbox{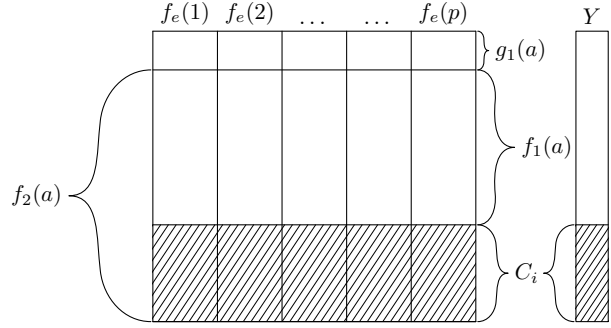}
    \end{center}
    \caption{The construction of a fractional coloring in
      Lemma~\ref{lemma-d2} for $d=6$.}
    \label{fig-f-d2}
  \end{figure}

  Let $\ell \in [2d']$ and $v=(A,\ell') \in V_\ell$. If $\ell$ is even, we
  set
  \begin{equation*}
    c_i(v):=f_\ell(A) \cup \bigcup_{j=\ell/2}^{d'-1} h_j([p]) \cup Y\,;
  \end{equation*}
  and if $\ell$ is odd, we set
  \begin{equation*}
    c_i(v):=f_\ell(A) \cup \Bigcup_{j={(\ell+1)/2}}^{d'-1} g_j([p])\,.
  \end{equation*}

  Setting $c_i(s):=C_i$, together with an analysis analogous to the that
  presented in the proof of Lemma~\ref{lemma-d0}, yield that $c_i$ is a
  fractional coloring of the ray $R_i$ with the required properties.
\end{proof}

Combining the lemma with Proposition~\ref{prop-homo} yields the following
theorem.

\begin{theorem}
  \label{thm-d2}
  Let $d$ be an integer such that $d\ge6$ and $d\equiv2\mod4$, $k$ a
  rational and~$\eps$ a positive real such that conditions
  \eqref{ineq-d2-og} and \eqref{ineq-d2-bound} are satisfied, where
  $d'=\lfloor d/4\rfloor$. If $G$ is a fractionally $k$-colorable graph and
  $W$ is a subset of its vertex set with pairwise distance at least $d$,
  then any fractional $(k+\eps)$-precoloring of $W$ can be extended to a
  fractional $(k+\eps)$-coloring of $G$.
\end{theorem}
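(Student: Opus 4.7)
The plan is to deduce Theorem~\ref{thm-d2} from Lemma~\ref{lemma-d2} in exactly the same manner as Theorem~\ref{thm-d0} was deduced from Lemma~\ref{lemma-d0}. All the technical work (the inductive construction of the functions $f_x$, $g_y$, $h_z$ inside a single ray, and verifying the measure bookkeeping using inequalities~\eqref{ineq-d2-aux1}) has already been carried out inside the lemma, so the theorem should follow from a short lifting argument via a universal graph.

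First, since $k$ is rational, write $k=p/q$ with $p,q$ positive integers; the hypothesis $k\ge2$ ensures $p/q\ge2$, which is what Lemma~\ref{lemma-d2} requires. Next, set $n:=|W|$ and apply Proposition~\ref{prop-homo}: it produces a homomorphism $h\colon G\to U_{p,q,d}^{n}$ whose restriction to $W$ is injective and sends $W$ into the set of special vertices of $U_{p,q,d}^{n}$. Use this injectivity to transfer the given fractional $(k+\eps)$-precoloring of $W$ to a fractional $(k+\eps)$-precoloring of the corresponding special vertices of $U_{p,q,d}^{n}$ (extending arbitrarily, or leaving undefined, on any special vertices not in $h(W)$, as permitted by the statement of Lemma~\ref{lemma-d2}).

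Because the parameters $k,\eps,d$ satisfy conditions \eqref{ineq-d2-og} and \eqref{ineq-d2-bound} by assumption, Lemma~\ref{lemma-d2} provides a fractional $(k+\eps)$-coloring $c_U\colon V(U_{p,q,d}^{n})\to 2^{[0,k+\eps)}$ that extends this precoloring. Define $c\colon V(G)\to 2^{[0,k+\eps)}$ by $c(v):=c_U(h(v))$. For any edge $uv\in E(G)$, the fact that $h$ is a homomorphism gives $h(u)h(v)\in E(U_{p,q,d}^{n})$, whence $c(u)\cap c(v)=c_U(h(u))\cap c_U(h(v))=\varnothing$; and $\|c(v)\|=\|c_U(h(v))\|\ge1$. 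Thus $c$ is a fractional $(k+\eps)$-coloring of $G$, and on $W$ it coincides with the original precoloring by construction.

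There is no serious obstacle: once Lemma~\ref{lemma-d2} and Proposition~\ref{prop-homo} are in hand, the only thing to verify is that the pullback under a homomorphism preserves both the measure-one condition on each vertex and disjointness on adjacent vertices, which is immediate. The only mildly delicate point worth flagging explicitly in the write-up is that the precoloring on $h(W)$ is well-defined, which requires precisely the injectivity of $h|_W$ guaranteed by Proposition~\ref{prop-homo}.
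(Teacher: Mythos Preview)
Your proposal is correct and follows exactly the route the paper takes: the paper simply states that combining Lemma~\ref{lemma-d2} with Proposition~\ref{prop-homo} yields Theorem~\ref{thm-d2}, and the explicit argument you wrote out mirrors the paper's proof of Theorem~\ref{thm-d0} word for word in spirit. The only cosmetic remark is that Proposition~\ref{prop-homo} itself supplies the integers $p$ and $q$ with $p/q=k$, rather than letting you fix them in advance, but this does not affect the argument.
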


We close this section by showing an upper bound on $g(k,6)$ for $k\in
[2.5,3)$, which is best possible due to Theorem~\ref{thm-lb-d6k25}. The
idea for the way we color the first neighborhood of each special vertex is
analogous to the one in Lemma~\ref{lemma-d2}. However, since the second
neighborhood of a special vertex does not form an independent set anymore,
we need to use a different strategy for coloring the second neighborhoods.

\begin{theorem}
  \label{thm-d6k25}
  Let $k$ be a positive rational less than 3 and $\eps$ a positive real
  such that $\eps \ge \dfrac{1}{k+\eps}\,$. If $G$ is a fractionally
  $k$-colorable graph and $W$ is a subset of its vertex set with pairwise
  distance at least six, then any fractional $(k+\eps)$-precoloring of $W$
  can be extended to a fractional $(k+\eps)$-coloring of $G$.
\end{theorem}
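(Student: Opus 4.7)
The plan is to reduce, via Proposition~\ref{prop-homo}, to the problem of extending a fractional $(k+\eps)$-precoloring $\{C_i\}$ of the special vertices of the universal graph $U=U_{p,q,6}^n$, where $k=p/q$ and $n=|W|$. By Proposition~\ref{prop-monotone} it suffices to handle the boundary case $\eps(k+\eps)=1$. I then apply Proposition~\ref{prop-pseudorandom} to obtain the pseudorandom partitions $f_o$ and $f_e$, and color the shared base of the rays by setting the color of $(A,3)$ to be $f_e(A)=\bigcup_{i\in A}f_e(i)$; this has measure one and is proper on the Kneser base, independently of which ray one is in.

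Fix one ray $R_i\cong R_{p,q,3}^X$, set the special vertex $s=(X,0)$ to receive $c_i(s):=C_i$, and let $V_\ell$ denote the set of vertices at distance exactly $\ell$ from $s$. By Proposition~\ref{prop-indep}, the shell $V_1$ is independent for every $k<3$; on $V_1$ I mimic the coloring of Lemma~\ref{lemma-d2}, giving each vertex a Kneser-indexed slice drawn from $f_o$ plus a share of the slack $Y=[0,k+\eps)\setminus f_e([p])$, all intersected with the complement of $C_i$ to respect the precoloring at $s$. For $k\in[2.5,3)$, however, Proposition~\ref{prop-indep} no longer guarantees independence of $V_2$, and this is precisely the feature that forces a different strategy on the second shell.

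On $V_2$ I color each vertex $(A,\ell)$ by a set of the form $(f_e(A)\setminus F_A)\cup G_A$, where $F_A\subseteq f_e(A)\setminus C_i$ and $G_A\subseteq C_i\cap(f_e(A)\cup Y)$ are chosen to have the same measure. I pick the family $\{G_A\}_{A\in\twvec{[p]\\q}}$ to be Kneser-disjoint, that is $G_A\cap G_{A'}=\emptyset$ whenever $A\cap A'=\emptyset$; combined with the automatic Kneser-disjointness of the $f_e(A)$'s, this makes any two adjacent $V_2$ vertices receive disjoint colors, and any $V_2$ color disjoint from the adjacent base color $f_e(A')$. The key of the construction is that trading a piece $F_A\subseteq f_e(A)\setminus C_i$ for a piece $G_A\subseteq C_i$ of equal measure transfers mass of $V_2$ colors from the complement of $C_i$ into $C_i$, freeing up exactly the room that $V_1$ needs in the complement of $C_i$ to be enlarged to measure one.

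Properness and measure equalities are then verified using the pseudorandom identities $\|f_o(i)\cap C_a\|=1/p$, $\|f_e(i)\cap C_a\|=1/(p+q\eps)$ and the derived $\|Y\cap C_a\|=\eps/(k+\eps)$; combined with the identity $\eps(k+\eps)=1$ they yield every required equality at the tight threshold. The main obstacle is the joint design of the families $\{F_A\}$ and $\{G_A\}$: the $G_A$'s must be Kneser-disjoint and confined to $C_i\cap(f_e(A)\cup Y)$, while for every valid $V_1$ label $A'$ the total measure $\sum_{A:A\cap A'=\emptyset}\|F_A\|$ has to match, exactly at the threshold, the shortfall of the naive $V_1$ domain $(f_e(A')\cup Y)\setminus C_i$. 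The uniform distribution of measure across Kneser labels guaranteed by Proposition~\ref{prop-pseudorandom} is what makes this joint feasibility possible.
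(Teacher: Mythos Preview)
Your plan follows the paper's route: reduce to $U_{p,q,6}^n$ via Proposition~\ref{prop-homo}, colour the common base by $f_e$, and on the second shell swap a piece $F_A\subseteq f_e(A)\setminus C_i$ for a piece $G_A\subseteq C_i$ so that the independent first shell can recover full measure from the freed space. One slip: on $V_1$ you should work with $f_e$, not $f_o$; the base and $V_2$ live in the $f_e$ world, and mixing in $f_o$ (which need not align with $f_e$ at all) would destroy the clean adjacency checks between shells.

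The remaining issue is your formulation of the ``joint feasibility''. The sum condition you write is not the right constraint: a $V_1$ vertex $(A',\cdot)$ cannot simply absorb $\bigcup_{A:A\cap A'=\varnothing}F_A$, because for two $V_2$-neighbours labelled $A_1,A_2$ (both disjoint from $A'$) with $A_1\cap A_2\ne\varnothing$, the freed piece $F_{A_1}\subseteq f_e(A_1)$ may still lie inside $f_e(A_2)\setminus F_{A_2}$, i.e., inside the other neighbour's colour. What the paper does is index the swap by $A\cap X$, where $X$ is the special label: take $g(j)\subseteq f_e(j)\setminus C_i$ of measure $\eps/(p+q\eps)$ for each $j\in X$, let $h(1),\dots,h(q)$ be an equipartition of $Y\cap C_i$, and set $F_A:=g(A\cap X)$, $G_A:=h(A\cap X)$. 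Disjoint $A,A'$ give disjoint $A\cap X,\,A'\cap X$, so $\{G_A\}$ is Kneser-disjoint for free; and, crucially, every $V_1$ vertex may use the single fixed set $g(X)\subseteq f_e(X)\setminus C_i$, of measure exactly $\eps/(k+\eps)$ (the shortfall), since $g(X)$ has been removed from every $V_2$ colour and is disjoint from $f_e(A')$ whenever $A'\cap X=\varnothing$. Your outline is correct, but this concrete indexing is the device that turns the ``main obstacle'' into a one-line verification.
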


\begin{proof}
  By Proposition~\ref{prop-homo}, it is enough to consider only the
  universal graphs $U_{p,q,6}^n$, where $p/q=k$ and $n\in \N$, and an
  arbitrary precoloring of its special vertices. As in the proofs of
  Lemmas~\ref{lemma-d0} and \ref{lemma-d2}, let $C_i$, for
  $i\in \Bigl[n\twvec{p\\q}\Bigr]$, be a precoloring of the special
  vertices and let $f_e$ be a mapping as described in
  Proposition~\ref{prop-pseudorandom}. For each ray $R_i$ we find a
  fractional coloring~$c_i$ that satisfies the following: for every set
  $A \in\twvec{[p]\\q}$, each vertex $v=(A,3)$ of the base of $R_i$ is
  colored by the set $f_e(A)$, and the special vertex of $R_i$ is colored
  by $C_i$.

  Fix a ray $R_i$, let $s$ be the special vertex of $R_i$ and set
  $Y:=[0,k+\eps) \setminus f_e([p])$. By symmetry, it is enough to consider
  the case where $R_i$ is a copy of $R_{p,q,3}^{[q]}$. We construct
  functions $g:[q]\hookrightarrow 2^{[0,k+\eps)}$ and
  $h:[q]\hookrightarrow 2^{[0,k+\eps)}$ as follows. For $j \in [q]$ we
  define $g(j)$ to be an arbitrary subset of $f_e(j)\setminus C_i$ of
  measure $\dfrac{\eps}{p+q\eps}\,$. Note that these subsets always exist
  since $\|f_e(j)\setminus C_i\| = \dfrac{k-1+\eps}{p+q\eps}\,$. Next, we
  define $h(1),h(2),\dots,h(q)$ as an arbitrary equipartition of
  $Y \cap C_i$ into $q$ parts of measure $\dfrac{\eps}{p+q\eps}\,$. The
  described construction of the functions is sketched in
  Figure~\ref{fig-f-d6k25}.

  \begin{figure}
    \begin{center}
      \epsfxsize=115mm
      \epsfbox{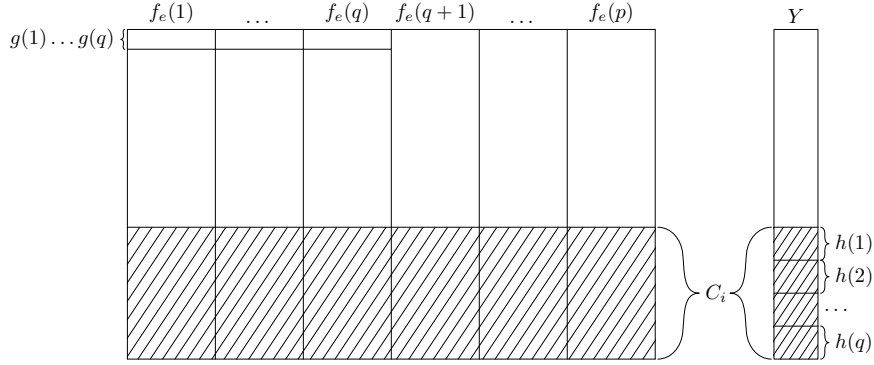}
    \end{center}
    \caption{The construction of a fractional coloring in
      Theorem~\ref{thm-d6k25}.}
    \label{fig-f-d6k25}
  \end{figure}

  Recall that the neighborhood of $s$ in $R_i$ forms an independent set.
  Since $s=([q],0)$, for every neighbor $(A,\ell')$ of $s$ we have
  $A\cap[q]=\varnothing$ and $\ell'=1$. We now construct a fractional
  coloring of $R_i$. Let $v=(A,\ell')$ be a vertex of $R_i$ and let $\ell$
  be the distance of $v$ from $s$ in~$R_i$. We define $c_i(v)$ in the
  following way:
  \vspace{-1mm}
  \begin{itemize}
  \item if $\ell\ge3$, then $c_i(v):=f_e(A)$;
  \vspace{-1mm}
  \item if $\ell=2$, then $c_i(v):=( f_e(A) \setminus g(A\cap[q])) \cup h(A
    \cap [q])$;
  \vspace{-1mm}
  \item if $\ell=1$, then $c_i(v):=( f_e(A) \setminus C_i ) \cup (Y
    \setminus C_i ) \cup g([q])$; and
  \vspace{-1mm}
  \item $c_i(s):=C_i$.
  \end{itemize}
  \vspace{-1mm}
  It is straightforward to check that we assigned disjoint sets to any two
  neighbors in $R_i$, and that any vertex at distance at least two from $s$
  is assigned a set of measure one. Furthermore, for every
  $A\in\twvec{[p]\setminus [q]\\q}$ the set $f_e(A)\setminus C_i$ is
  disjoint from both $Y$ and $g([q])$, and has measure
  $1-\dfrac{1}{k+\eps}\,$. Since $(Y \setminus C_i ) \cup g([q])$ has
  measure $\eps \ge\dfrac{1}{k+\eps}$ (recall that
  $\|Y \cap C_i\|=\eps/(k+\eps)$), it follows that~$c_i$ is a fractional
  coloring with the required properties.
\end{proof}


\subsection{Lower bound for distance six}
\label{sect-lower-d6k25}

The goal of this section is to prove that
$g(k,6)=\frac12\bigl(\sqrt{k^2+4}-k\bigr)$ for $k\in[2.5,3)$, i.e.,
$g(k,6)$ is the positive root of the equation $x=\dfrac1{k+x}\,$ for~$k$ in
that range. Before we present a formal proof, let us first sketch the idea.
Suppose for a contradiction that there exist $k\in[2.5,3)$ and
$\eps>0$ such that $g(k,6)\le\eps$ and $\eps<\dfrac1{k+\eps}\,$. As in the
proof of Theorem~\ref{thm-lb-d4}, we may assume that $\eps$ is a rational.
Let $p'$ and $q$ be integers such that $p'/q=k+\eps$ and $kq$ is an
integer, and let $p=kq$. We construct a precoloring of the special vertices
of $U_{p,q,6}^n$, where $n=\twvec{p'\\ q}$, such that each point of the
interval $[0,k+\eps)$ belongs to exactly
$\twvec{p\\q}\cdot\twvec{p'-1\\q-1}$ sets.

Now fix an arbitrary extension $c:V(U^n_{p,q,6})\to 2^{[0,k+\eps)}$ of this
precoloring to a fractional coloring of $U^n_{p,q,6}$, and let~$c'$ be the
restriction of~$c$ to the special vertices and the common bases of the
rays. For every ray $R_i$ of $U^n_{p,q,6}$, we will consider a linear
program $P_i$ that minimizes the value of a fractional coloring that
extends $c'$. Clearly, for every ray $R_i$ the optimal solution of $P_i$
has value at most $k+\eps$. On the other hand, we will show that there is a
ray $R_i$ such that the optimum of the dual program to $P_i$ is at least
$k+\dfrac1{k+\eps}\,$. Therefore, by weak duality of linear programming
(see e.g.~\cite{bib-schrijver}), it follows that $k + \dfrac1{k+\eps} \le k
+ \eps$, which contradicts the assumption.

We start the formal exposition by introducing the notion dual to fractional
colorings. Let $G=(V,E)$ be a graph. We say that a mapping $x:V(G)\to[0,1]$
is a \emph{fractional clique} in $G$ if for every independent set $I$ of
$G$ the sum $\sum\limits_{v\in I} x(v)$ is at most one. The \emph{weight
  of~$x$} is the sum $\sum\limits_{v\in V(G)} x(v)$. The problem of
determining the maximum weight of a fractional clique in $G$ can be also
formulated as a linear program. This program is the dual program to the
program that determines the fractional chromatic number of $G$. For a
fractional clique $x$ in a graph $G$ and a vertex subset $S \subseteq V(G)$
we set $x(S) := \sum\limits_{v\in S} x(v)$.

The following proposition asserts that for every $p$ and $q$, where $p/q
\in [2.5,3)$ and $q$ is even, there exist a maximum fractional clique 
and a vertex $v$ in the Kneser graph $K_{p/q}$ such that the sum of the weights
over $N(v)$ is equal to one.
\begin{proposition}
  \label{prop-fclique}
  For every positive integer $p$ and for every positive even integer $q$
  such that $p/q \in [2.5,3)$, there exists a fractional clique
  $x:\twvec{[p]\\q}\to [0,1]$ in $K_{p/q}$ of weight $p/q$ such that
  $x(v)=\twvec{p-q\\q}^{-1}$
  for every neighbor $v$ of the vertex $[q]$.
\end{proposition}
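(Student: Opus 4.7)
Plan: The plan is to construct $x$ as a function invariant under the stabilizer of $[q]$ in $S_p$, which is $S_q\times S_{p-q}$. Under this action the vertex set of $K_{p/q}$ decomposes into orbits $O_i:=\{\,A\in\binom{[p]}{q}:|A\cap[q]|=i\,\}$ for $i=0,1,\ldots,q$, so a symmetric $x$ is determined by the $q+1$ values $x_i:=x(A)$ for $A\in O_i$. The hypothesis $p/q<3$ guarantees that $O_0=N([q])$ is independent in $K_{p/q}$, because any two $q$-subsets of the $(p-q)$-set $[p]\setminus[q]$ intersect once $2q>p-q$. Setting $x_0=\binom{p-q}{q}^{-1}$ therefore both fulfils the required value on $N([q])$ and saturates the clique inequality $\sum_{v\in N([q])}x(v)\le 1$.

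To pin down $x_1,\ldots,x_q$, I would combine two families of equalities. First, the value $p/q$ of the fractional chromatic number of $K_{p/q}$ is achieved by the symmetric dual assignment that puts weight $1/q$ on the star around each element of $[p]$; by complementary slackness, every maximum fractional clique satisfies $\sum_{v\ni j}x(v)=1$ for each $j\in[p]$, and this collapses by symmetry to two linear equations, one for $j\in[q]$ and one for $j\in[p]\setminus[q]$. Second, whenever $1\le i<3q-p$, the set $N([q])\cup\{A\}$ is still independent for every $A\in O_i$; together with the saturated $N([q])$-inequality this forces $x_i=0$. These relations, combined with the total-weight condition $\sum_{i=0}^{q}|O_i|\,x_i=p/q$, form a linear system from which I would extract a non-negative solution. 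The hypothesis $p/q\ge 5/2$ ensures that the forced zeroes occupy only small indices so that the surviving weights can be chosen non-negative, while the parity of $q$ allows a clean combinatorial prescription of those surviving weights.

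The step I expect to be the main obstacle is verifying the clique inequality $\sum_{v\in I}x(v)\le 1$ for \emph{every} maximal independent set $I$ of $K_{p/q}$, beyond the stars and the extensions of $N([q])$ treated above. Kneser graphs admit Hilton--Milner-type families and other non-star maximal intersecting families, each of which a~priori contributes a non-trivial constraint on $x$. To handle these I would exploit that the constructed $x$ is supported on only a few orbits $O_i$, so that any maximal independent set $I$ need only be analysed through the integers $|I\cap O_i|$ on that small support; the evenness of $q$ enters again here to permit a pairing of elements of $[q]$ and of $[p]\setminus[q]$ that yields uniform bounds on these intersection sizes. A case analysis based on the dominant orbit of $I$ then delivers the desired inequality and completes the construction.
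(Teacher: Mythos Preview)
Your framework matches the paper's: both construct a fractional clique invariant under the stabiliser $S_q\times S_{p-q}$ of $[q]$, putting weight $\binom{p-q}{q}^{-1}$ on each vertex of $O_0=N([q])$. The paper is simply more explicit: it supports $x$ only on $O_0\cup O_{q/2}\cup\{[q]\}$, setting $x([q])=3-p/q$ and $x(A)=2(p/q-2)/|O_{q/2}|$ for $A\in O_{q/2}$, and one checks directly that these satisfy your complementary-slackness equations. Your linear system, by contrast, is badly underdetermined (for $k$ near~$3$ the forced zeroes disappear entirely, leaving roughly $q$ free variables and two equations), so ``extract a non-negative solution'' hides a real choice; the paper's choice of $O_{q/2}$ is the one your parity remark gestures at.

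The genuine gap is the verification that $\sum_{v\in I}x(v)\le 1$ for every independent set $I$. This is the bulk of the paper's proof, and it does \emph{not} proceed by a case analysis on the dominant orbit as you suggest. Instead, for an independent set $I$ meeting both $O_0$ and $O_{q/2}$ (the hard case), the paper averages over all $(p-q-1)!$ circular orderings $O$ of $[p]\setminus[q]$: to each $A\in I\cap(O_0\cup O_{q/2})$ it attaches its ``trace'' $A\setminus[q]$ and counts, for each ordering $O$, only those traces that appear as contiguous arcs in $O$. A reweighting makes $x(I)$ the average over $O$ of a sum $x'(O)$, and the task reduces to bounding $x'(O)\le 1$ for each fixed $O$ --- now a question about arcs of lengths $q$ and $q/2$ on a cycle of length $p-q$, which is handled by a combinatorial shifting argument showing at most $3q/2$ arcs can occur. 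Your proposed ``pairing of elements'' and orbit-based case split do not describe this reduction, and without it the space of maximal intersecting families in $\binom{[p]}{q}$ is too rich for a direct attack to be convincing.
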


\begin{proof}
  Let $V_0:=\{[q]\}$, $V_1:= \twvec{[p]\setminus [q] \\ q}$ and
  $V_2:=\{\,X \in\twvec{[p]\\q}:|X\cap[q]|=q/2\,\}$ be vertex subsets of
  $K_{p/q}$. Note that $|V_1|=\twvec{p-q\\ q}$,
  $|V_2|=\twvec{q\\q/2} \cdot \twvec{p-q\\ q/2}$, and the sets $V_0$, $V_1$
  and $V_2$ are pairwise disjoint. Let $H$ be the subgraph of $K_{p/q}$
  induced by $V_0 \cup V_1 \cup V_2$. Observe that $H$ is connected since
  $p/q \ge 2.5$. We will show the existence of a fractional clique $x$ in
  $H$ of weight $k$ such that $x(v_1) ={|V_1|}^{-1}$ for each vertex $v_1
  \in V_1$. The statement of the proposition then follows.

  For each vertex $v \in V(H)$, define
  \begin{equation*}
    x(v) := \left\{\begin{array}{cl}
        3-p/q ,& \text{if $v=[q]$;} \\[1mm]
        \dfrac1{|V_1|}\,, & \text{if $v\in V_1$;}\\[3mm]
        \dfrac{2(p/q-2)}{|V_2|}\,, & \text{if $v\in V_2$.}
      \end{array}\right.
  \end{equation*}
  The weight of $x(H)$ is equal to $p/q$. Hence, it remains to check that
  $x(I) \le 1$ for every independent set $I$ of $H$.

  Fix an independent set $I$ of $H$. First suppose that $[q]\in I$. Observe
  that since $I$ has to be disjoint from $V_1$, it is enough to show that
  $|I \cap V_2| \le |V_2|/2$. Consider the subgraph $H'$ induced by $V_2$.
  Since every vertex in $H'$ has degree $\twvec{p-3q/2\\ q/2}$, the graph
  $H'$ is regular and therefore it has independence number at most
  $|V(H')|/2$.

  In the remainder of the proof we suppose that $[q] \notin I$. Next set
  $S_1:=I\cap V_1$, $S_2:=\{\,X \in \twvec{[p] \setminus [q]\\ q/2}:
  \text{there is a $Y\in I \cap V_2$ with $X \subseteq Y$}\,\}$, and
  $S:=S_1 \cup S_2$. If $S_2$ is empty, then $I$ is a subset of $V_1$, and
  therefore $x(I)$ is at most one. On the other hand, if $S_1$ is empty,
  then $I$ is a subset of $V_2$. The graph induced by $V_2$ is
  $\twvec{p-3q/2\\q/2}$-regular, hence $x(I) \le p/q-2 < 1$.

  So we can assume that both $S_1$ and $S_2$ are non-empty. For a set
  $X \in S_2$, we define $\hat{x}(X):=
  \sum\limits_{Y \in I \cap V_2\;\text{s.t.}\; X \subseteq Y}x(Y)$. Now let
  $\O$ be the set of all $(p-q-1)!$ circular orders of the set
  $[p] \setminus [q]$. We say that a set $Z \subseteq [p]\setminus[q]$ is
  an \emph{arc} in $O \in \O$ if we can order the elements of $Z$ in such a
  way that they form a consecutive segment in $O$. For every $O \in \O$, we
  define the set $S^O$ as the subset of $S$ that contains $X \in S$ if and
  only if $X$ is an arc in $O$.

  Analogously, define $S_1^O$ as the family of sets $X \in S_1$ that are
  arcs in $O$, and $S_2^O$ as the family of sets $X \in S_2$ that are arcs
  in $O$. Observe that for every $X \in S_1$ there exist $q!(p-2q)!$
  choices of $O$ such that $X \in S_1^O$, and for every $X \in S_2$ there
  exist $(q/2)!(p-3q/2)!$ choices of $O$ such that $X \in S_2^O$.
  Consider the function $x': S \to [0,1]$ defined as follows:
  \vspace{-1mm}
  \begin{itemize}
  \item for $X \in S_1$, set $x'(X):=\dfrac{1}{p-q} =
    \dfrac{(p-q-1)!}{q!(p-2q)!} \cdot x(X)$; and
  \vspace{-1mm}
  \item for $X \in S_2$, set $x'(X):=\dfrac{2(p/q-2)\cdot|\{\,Y \in I:X
      \subseteq Y\,\}|}{(p-q)\cdot\twvec{q\\q/2}} =
    \dfrac{(p-q-1)!}{(q/2)!(p-3q/2)!} \cdot \hat{x}(X)$.
  \end{itemize}
  \vspace{-1mm}
  By a double counting argument,
  \begin{equation*}
    (p-q-1)!\cdot x(I) = \sum_{O\in \O}\sum_{X \in S^O} x'(X)\,.
  \end{equation*}
  Therefore, it is enough to show that for every $O \in \O$ the sum
  $\sum\limits_{X \in S^O} x'(X)$ is at most one. Let $x'(O)$ be this sum.

  Fix a circular order $O\in \O$. If $S_2^O$ is empty, then
  $x'(O) = \dfrac{|S_1^O|}{p-q} \le 1$. If $S_1^O$ is empty, then we show
  that $x'(O) \le p/q-2$. Indeed, consider the subgraph $H_O$ of $H$
  induced by $A \cup X$, where $A \in\twvec{[q]\\ q/2}$ and $X \in S_2$.
  Note that $|V(H_O)|\le(p-q)\twvec{q\\ q/2}$. By the definitions of $x'$
  and $\hat{x}$,
  \begin{equation*}
    x'(O) = \frac{\twvec{p-q\\ q/2}}{(p-q)} \cdot x(I \cap V(H_O)) =
    2(p/q-2) \cdot \frac{ |I \cap V(H_O)|}{(p-q)\twvec{q\\q/2}}\,.
  \end{equation*}
  Since the graph $H_O$ is $(p-2q+1)$-regular,
  $|I\cap V(H_O)|\le|V(H_O)|/2 \le (p-q)\twvec{q\\ q/2}/2$, and hence
  $x'(O) \le p/q-2$.

  Finally, consider the case that both $S_1^O$ and $S_2^O$ are non-empty.
  We claim that $|S^O| \le 3q/2$. We say that an arc $L$ in $O$ of size
  $q/2$ is \emph{forbidden} for $S_1^O$ if there exists a set in $S_1^O$
  that is disjoint from $L$. Let $s_1=|S_1^O|$ and $s_2=|S_2^O|$. Every arc
  in $O$ of size $q/2$ intersects at most $3q/2 - 1$ arcs in $O$ of size
  $q$, hence $s_1 \le 3q/2 - 1$. On the other hand, we show that at least
  $p-5q/2+s_1$ arcs in $O$ of size $q/2$ are forbidden for $S_1^O$, which
  means that $s_2 \le 3q/2-s_1$.

  Fix an arbitrary cyclic numbering of the elements of the set
  $[p]\setminus[q]$ with numbers $1,2,\dots,p-q$ such that any two
  consecutive elements in $O$ have consecutive numbers. Let~$K_\ell$, for
  $\ell \in [p-q]$, be the arc in $O$ of size $q$ that starts at the
  $\ell$-th element of $O$ and contains the next $q-1$ elements of $O$.
  Analogously, let $L_\ell$, for $\ell \in [p-q]$, be the arc of size~$q/2$
  that starts at the $\ell$-th element and contains the next $q/2-1$
  elements. For brevity, we also refer to $K_{p-q}$ as $K_0$, and to
  $L_{p-q}$ as $L_0$. If the sets in $S_1^O$ correspond to $s_1$
  consecutive arcs, i.e., for a fixed $\ell \in [p-q]$ the set $S_1^O$ is
  equal to $\{\,K_{\ell + j \mod p-q}:j=0,\dots,s_1-1\,\}$, then observe
  that exactly $p-5q/2+s_1$ arcs in $O$ are forbidden for $S_1^O$.

  Suppose now that the sets in $S_1^O$ do not correspond to $s_1$
  consecutive arcs. By symmetry, we may assume that $K_1 \in S_1^O$,
  $K_j \in S_1^O$ for some $j\in\{3,\ldots,p-q-1\}$, and
  $K_{j'}\notin S_1^O$ for every $j'=2,\ldots,j-1$. We will show that there
  exists a set~$T$ of~$s_1$ consecutive arcs in $O$ of size~$q$ such that
  the number of forbidden arcs in $O$ of size $q/2$ for~$S_1^O$ is at least
  the number of forbidden arcs for $T$. If the arc $L_{p-3q/2+2}$, i.e.,
  the arc that ends at the first element of~$O$, is disjoint from $K_j$,
  then every set that is disjoint from~$K_2$ is also disjoint from $K_1$ or
  $K_j$. Therefore, every arc in $O$ of size $q/2$ that is forbidden for
  $T':=(S_1^O\setminus \{K_j\}) \cup \{K_2\}$ is also forbidden for
  $S_1^O$.

  If the arc $L_{p-3q/2+2}$ intersects $K_j$, then since $K_{j'}$ is not in
  $S_1^O$ for all $j'=2,\ldots,j-1$, the arc $L_{j+q \mod p-q}$ is disjoint
  from $K_j$ and intersects every other set in $S_1^O$. Since it intersects
  also $K_2$, the number of forbidden arcs in $O$ of size $q/2$ for
  $T':=(S_1^O\setminus \{K_j\}) \cup \{K_2\}$ is at most the number of
  forbidden arcs for $S_1^O$. By repeating this procedure till the arcs
  in~$O$ in the set $T'$ are consecutive, we conclude that the number of
  forbidden arcs for $S_1^O$ is at least $p-5q/2+s_1$.

  Now if $s_1\ge q$, then
  \begin{equation*}
    x'(O) = \frac{s_1}{p-q} + \sum_{X \in S_2^O} x'(X) \le
    \frac{s_1 + (3q/2 - s_1) \cdot 2(p/q-2)}{p-q} \le 1 \,,
  \end{equation*}
  since the numerator of the last fraction is equal to
  $3p - 6q - s_1(2\cdot p/q-5)$, which is at most $p-q$.

  On the other hand, if $s_1<q$, then consider the partition of the set
  $\twvec{[q]\\ q/2}$ into $\twvec{q\\ q/2}/2$ unordered pairs $\{A,B\}$
  such that $A$ and $B$ are disjoint. Fix such a pair $\{A,B\}$. We claim
  that the number of tuples $(L,Z)$, where $L \in S_2^O$, $Z \in \{A,B\}$
  and $L \cup Z \in I$, is at most $q/2 + s_2$. Indeed, otherwise there
  would be at least $q/2 + 1$ arcs $L \in S_2^O$ such that both $L \cup A$
  and $L \cup B$ are in $I$. Since every arc in $O$ of size $q/2$
  intersects $q/2 - 1$ other arcs of size $q/2$, there exist two disjoint
  sets in $I$, which contradicts the fact that $I$ is an independent set.
  Therefore, it follows that
  \begin{equation*}
    \sum_{X \in S_2^O} x'(X) \le \frac{(q/2 + s_2)(p/q-2)}{(p-q)}
  \end{equation*}
  and
  \begin{equation*}
    x'(O) =\frac{s_1}{p-q} + \sum_{X \in S_2^O} x'(X) \le
    \frac{s_1 + (q/2+3q/2-s_1)(p/q-2)}{p-q} < 1\,.
  \end{equation*}
  The last inequality holds since the numerator of the last fraction is
  equal to $s_1(3-p/q)+2p-4q$, which is less than $p-q$.
\end{proof}

We are now ready to give a lower bound on $g(k,6)$ for $k\in[2.5,3)$.

\begin{theorem}
  \label{thm-lb-d6k25}
  Let $k\in[2.5,3)$ be a rational and $\eps$ a positive real such
  that $\eps < \dfrac1{k+\eps}\,$. There exist a graph $G$ with
  fractional chromatic number $k$, a subset $W$ of its vertex set at
  pairwise distance at least six and a fractional $(k+\eps)$-precoloring of
  $W$ that cannot be extended to a fractional $(k+\eps)$-coloring of $G$.
\end{theorem}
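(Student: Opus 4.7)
The strategy is exactly the one laid out in the paragraphs preceding the statement; what I sketch here is how I would fill in the technical pieces.

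I would first reduce to a concrete instance. By Proposition~\ref{prop-monotone} it suffices to rule out a rational value slightly above the hypothesised $\eps$ that still satisfies $\eps<\tfrac{1}{k+\eps}$, so I may assume $\eps$ is rational. Choose positive integers $p,q,p'$ with $p/q=k$, $p'/q=k+\eps$ and, after multiplying both $p$ and $q$ by a suitable integer if needed, with $q$ even so that Proposition~\ref{prop-fclique} applies. Set $n:=\binom{p'}{q}$ and $G:=U^n_{p,q,6}$. Precolor the special vertices exactly as in the proof of Theorem~\ref{thm-lb-d4}: to the $n$ special vertices lying in the rays $R_{p,q,3}^Y$, for each fixed $Y\in\binom{[p]}{q}$, assign the $n$ distinct sets $f(X):=\bigcup_{i\in X}[(i-1)/q,i/q)$, $X\in\binom{[p']}{q}$. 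Each point of $[0,k+\eps)$ is then covered by exactly $\binom{p}{q}\binom{p'-1}{q-1}$ precolored sets.

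Suppose for contradiction that some $c:V(G)\to 2^{[0,k+\eps)}$ extends this precoloring, and let $c'$ denote its restriction to the special vertices together with the common base $H\cong K_{p/q}$ of the rays. For each ray $R_i$, let $P_i$ be the linear program that, taking $c'$ as given on the endpoints of $R_i$, minimises the total length of an interval on which $c'|_{R_i}$ extends to a fractional coloring of $R_i$. Since $c|_{R_i}$ is such an extension into $[0,k+\eps)$, $\mathrm{opt}(P_i)\le k+\eps$. The dual $P_i^{\ast}$ produces a weighted fractional clique on $R_i$, and by weak duality it suffices to exhibit a single $i$ with $\mathrm{opt}(P_i^{\ast})\ge k+\tfrac{1}{k+\eps}$ to obtain the contradiction $k+\tfrac{1}{k+\eps}\le k+\eps$.

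To construct such a ray, I would start from the fractional clique $x$ on $H=K_{p/q}$ supplied by Proposition~\ref{prop-fclique}: it has weight $k$ and assigns the uniform value $\binom{p-q}{q}^{-1}$ to every neighbor of $[q]$. Designate $[q]$ as the relevant base vertex of every ray. For each $i$, I would extend $x$ to a fractional clique on $R_i$ by adding carefully chosen weights on the layer $V_2$ of distance-two vertices from $s_i$ and on $s_i$ itself, so that the $c'(s_i)$-contribution to the dual objective equals the measure of $c'(s_i)$ intersected with the ``free'' region left by the clique on $V_2$. Averaging this contribution over $i$ and using that the precolorings $f(X)$ cover $[0,k+\eps)$ uniformly with multiplicity $\binom{p'-1}{q-1}$, the average extra dual value per ray equals $\tfrac{q}{p'}=\tfrac{1}{k+\eps}$, so at least one ray realises the dual value $k+\tfrac{1}{k+\eps}$.

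The main technical obstacle is the clique-lifting step. Because $p/q<3$, the layer $V_2$ in each ray is a non-trivial copy of $K_{p/q}$ with internal edges, so independent sets can straddle $\{s_i\}$, $V_2$, and $V_3=V(H)$; the lifted weights must respect the fractional-clique inequality on every such independent set. The uniformity in Proposition~\ref{prop-fclique}---that every neighbor of $[q]$ in $H$ carries exactly the weight $\binom{p-q}{q}^{-1}$---is precisely what makes the lift feasible, since it permits a transfer of clique weight from $V_3$ into $V_2$ without exceeding $1$ on any mixed independent set. Verifying this clique inequality and checking that the averaging computation over $X\in\binom{[p']}{q}$ yields exactly the boost $\tfrac{1}{k+\eps}$ is where the bulk of the formal proof lies.
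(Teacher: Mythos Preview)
Your outline follows the paper's strategy and correctly identifies the key ingredients: the reduction to a rational $\eps'$ and a specific universal graph, the uniform precoloring by $f(X)$, the primal/dual LP per ray, the averaging over the $n=\binom{p'}{q}$ rays with special label $[q]$ to locate one with bonus at least $q/p'$, and the use of the fractional clique of Proposition~\ref{prop-fclique}. That is exactly how the paper proceeds.

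Where your description drifts from what actually works is the placement of the dual weight. In the paper's LP the dual carries variables $y(v)$ only for $v\in V':=V(R_i)\setminus(V(H)\cup\{s\})$, i.e.\ on layers~$1$ and~$2$, together with variables $y^d(I),y^e(I)$ indexed by independent sets of $H$; there is no variable on $s$ itself, and the base clique is not kept on $H$. The paper does \emph{not} ``extend $x$ from $H$'' by adding weight on $V_2$ and on $s$. Instead it embeds the whole fractional clique $x$ of $K_{p/q}$ into $V'$ via the map
\[
g(X)=\begin{cases}(X,1),&X\cap[q]=\varnothing,\\(X,2),&X\cap[q]\neq\varnothing,\end{cases}
\]
sets $y(g(X)):=x(X)$ (so weight lands on both layers~$1$ and~$2$), and takes $y^d(I)=|N\cap I|\big/\binom{p-q}{q}$, $y^e(I)=0$. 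The uniformity in Proposition~\ref{prop-fclique} is used precisely here: because every neighbor of $[q]$ carries weight $\binom{p-q}{q}^{-1}$, for any independent set $I\subseteq R_i$ with $s\in I$ and $I\cap N\neq\varnothing$ one can swap each base vertex $(X,3)\in I\cap N$ for $(X,1)$ (which is absent from $I$ since $s\in I$) to produce an independent set $I'$ with $\sum_{v\in I'}y(v)=y^d(I\cap H)+\sum_{v\in I}y(v)$, establishing dual feasibility. Your sketch puts the lift in the wrong layer and on the wrong variable; once you see the embedding $g$ and the swap $I\mapsto I'$, the verification you flag as ``the bulk of the formal proof'' becomes a two-line check.
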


\begin{proof}
  Analogously to the proof of Theorem~\ref{thm-lb-d4}, let $\eps_0$ be the positive
  root of the equation $x=\dfrac1{k+x}$ and let $p'$ and $q$ be positive
  integers such that $q$ is even, $k+\eps \le p'/q < k+\eps_0$ and $kq$ is
  an integer. Next, set $p:=kq$, $\eps':=p'/q-k$ and $G:=U^n_{p,q,6}$,
  where $n=\twvec{p'\\ q}$. We will show the existence of a fractional
  $(k+\eps')$-precoloring of the special vertices of $G$ that cannot be
  extended to a fractional $(k+\eps')$-coloring of $G$, which implies the
  statement of the theorem by Proposition~\ref{prop-monotone}.

  Let $f:[p']\hookrightarrow 2^{[0,k+\eps')}$ be the function
  $f(i)=[(i-1)/q,i/q)$, for $i\in[p']$. Consider a precoloring of $G$ that
  assigns to the $n$ special vertices of the copies of $R_{p,q,3}^Y$,
  $Y \in\twvec{[p]\\ q}$, all the $n$ different sets $f(X)$, for
  $X\in\twvec{[p']\\ q}$. We assert that this fractional
  $(k+\eps')$-precoloring cannot be extended to a fractional coloring of
  the whole graph.

  Suppose, on the contrary, that there exists an extension of the
  precoloring given by $f$ to a fractional coloring
  $c:V(G)\to 2^{[0,k+\eps')}$. Let $H$ be the base of $G$ (recall that $H$
  is isomorphic to $K_{p/q}$), let $\I_H$ be the set of all independent
  sets in $H$, and for every $I\in \I_H$ let $\bar I$ be the complement of
  $I$ in $H$, i.e., $\bar I ={}V(H)\setminus I$.

  For every ray $R_i$ with its special vertex colored by $C_i$ and every
  independent set $I \in \I_H$, let $d_i(I)$ be the measure of the set of
  all points in $[0,k+\eps') \cap C_i$ assigned by $c$ to all vertices in
  $I$ and none in $\bar I$. In other words,
  \begin{equation*}
    d_i(I):=\Bigl\| \bigcap_{v\in I} (c(v) \cap C_i) \setminus c(\bar I)
    \Bigr\|\,.
  \end{equation*}
  Analogously, let $e_i(I)$ be the measure of points of
  $[0,k+\eps') \setminus C_i$ used, in the coloring $c$ restricted to $H$,
  exactly on the vertices of $I$, i.e.,
  \begin{equation*}
    e_i(I):=\Bigl\|\bigcap_{v\in I} (c(v) \setminus C_i)
    \setminus c(\bar I)\Bigr\|\,.
  \end{equation*}
  Finally, set $t_i(I):=d_i(I) + e_i(I)$. Observe that for every vertex
  $v\in V(H)$ the sum of $t_i(I)$ over all independent sets $I$ that
  contain $v$ is equal to one.

  Now let $N$ be the neighborhood in $H$ of the vertex $[q]$. (Where $[q]$
  is the vertex obtained by identifying the vertices $([q],3)$ from all
  rays $R_i$.) Recall that $|N|=\twvec{p-q\\ q}$. We assert that there
  exists a ray $R_i$ with special vertex $([q],0)$ for which
  \begin{equation}
    \label{ineq-d6k25-doublecount}
    \sum_{I \in \I_H} |N\cap I| \cdot d_i(I) \ge |N|\cdot\frac{1}{k+\eps'}
    = \twvec{p-q\\ q} \cdot\frac{q}{p'}\,.
  \end{equation}
  Indeed, let $C_1, C_2, \dots, C_n$ be the sets used in the precoloring of
  the vertex $([q],0)$ in the rays $R^{[q]}_{p,q,3}$. For simplicity, let
  $R_1, R_2, \dots, R_n$ be these rays and $d_1, e_1, \dots, d_n, e_n$ are
  the corresponding functions defined above. Since each point of
  $[0,k+\eps')$ is contained in exactly $\twvec{p'-1\\q'-1}$ sets $C_i$, it
  follows that $\sum\limits_{i=1}^n d_i(I)=\twvec{p'-1\\q'-1} \cdot t_i(I)$
  for every $I \in \I_H$. Next, by a double counting argument we have
  $\sum\limits_{I\in I_H} |I \cap N| \cdot t_i(I) =|N|=\twvec{p-q\\q}$.
  Therefore,
  \begin{equation*}
    \sum_{i=1}^n \sum_{I \in \I_H} |N\cap I| \cdot d_i(I) =
    \sum_{I \in \I_H} |N\cap I| \biggl(\sum_{i=1}^n d_i(I)\biggr) =
    \twvec{p'-1\\q-1}\cdot\twvec{p-q\\q}\,.
  \end{equation*}
  Since $n=\twvec{p'\\ q}$, there exists a ray $R_i$ with special vertex
  $([q],0)$ such that inequality~\eqref{ineq-d6k25-doublecount} holds. In
  the remainder of the proof, we fix $R_i$ to be such a ray and let $s$ be
  the special vertex in $R_i$.

  Let $\I_R$ be the set of all independent sets in the ray $R_i$ and let
  $V':=V(R_i) \setminus( V(H) \cup \{s\})$. Consider the following linear
  program $P$:
  \begin{alignat*}{2}
    \text{minimize: }    & \sum_{I \in \I_R} w(I); \\
    \text{subject to: } & \sum_{I \in \I_R,\;v\in I} w(I) \ge 1, &&
    \quad \forall v \in V'; \\
    & \sum_{\substack{I \in \I_R,\; s \in I \\[0.6mm] I \cap H = I_H}} w(I)
    \ge d_i(I_H), && \quad \forall I_H \in \I_H;\\
    & \sum_{\substack{I\in \I_R,\;s \notin I \\[0.6mm]I \cap H = I_H}} w(I)
    \ge e_i(I_H), && \quad \forall I_H \in \I_H;\\
    & w(I) \ge 0, && \quad \forall I \in \I_R.
  \end{alignat*}
  Observe that the fact that $c$ is a fractional $(k+\eps')$-coloring of
  $G$ implies that there exists a solution satisfying the conditions of $P$
  such that $\sum_{I \in \I_R} w(I) \le k+\eps'$. Now consider the dual
  program $P^*$ of $P$:
  \begin{alignat*}{2}
    \text{maximize: } &
    \sum_{v \in V'} y(v) + \sum_{I \in \I_H}\bigl[d_i(I) \cdot y^d(I)
    + e_i(I) \cdot y^e(I)\bigr];\\
    \text{subject to: } & y^d({I\cap H}) + \sum_{v \in I} y(v) \le 1,
    && \forall I \in \I_R\;\text{s.t.}\: s \in I; \\
    & y^e({I\cap H}) + \sum_{v \in I} y(v) \le 1,&&
    \forall I \in \I_R\;\text{s.t.}\; s \notin I;\\
    & y(v) \ge 0, & & \forall v \in V'; \\[1mm]
    & y^d(I) \geq0,\; y^e(I) \ge 0, && \forall I \in \I_H.
  \end{alignat*}
  We will show that there exists a feasible solution of $P^*$ such that the
  objective function of~$P^*$ is at least $k+q/p'=k+\dfrac1{k+\eps'}\,$.
  Therefore, $\eps' \ge{\dfrac1{k+\eps'}}\,$, which is a contradiction with
  the choice of $\eps'$.

  Let $x:\twvec{[p]\\ q} \to [0,1]$ be a fractional clique in $K_{p/q}$ of
  weight $p/q$ such that for every $X \in\twvec{[p]\setminus[q]\\ q}$ we
  have $x(X)=\twvec{p-q\\ q}^{-1}$. Proposition~\ref{prop-fclique} implies
  that such a clique exists (recall that $q$ is even). We now define an
  embedding $g$ of $K_{p/q}$ in the subgraph of~$R_i$ induced by $V'$. If
  $X\in\twvec{[p]\\ q}$ is disjoint from $[q]$, we set $g(X):=(X,1)$;
  otherwise we set $g(X):=(X,2)$. For every set $X \in\twvec{[p]\\ q}$ we
  set $y(g(X)):=x(X)$, and for every other vertex
  $v \in V' \setminus g\Bigl(\twvec{[p]\\ q}\Bigr)$ we set $y(v):=0$.
  Finally, we set $y^d(I):=\dfrac{|N \cap I|}{\twvec{p-q\\ q}}$ and
  $y^e(I):=0$ for every $I \in \I_H$.

  By the definition of $x$, it follows that
  $\sum\limits_{v \in V'} y(v) = k$. Next,
  inequality~\eqref{ineq-d6k25-doublecount} implies that
  $\sum\limits_{I \in \I_H} d_i(I) \cdot y^d(I) \ge q/p'$. Since
  $y$ forms a fractional clique in $R_i$, it remains to show that for
  $I \in \I_R$, where $s \in I$ and $I \cap N \neq \varnothing$, we have
  $y^d({I\cap H}) + \sum\limits_{v \in I} y(v)\le1$. We show that for every
  $I$, where $s \in I$ and $I \cap N \ne\varnothing$, there exists an
  independent set $I' \in \I_R$ that is disjoint from $N$ and that
  satisfies
  \begin{equation}
    \label{eq-d6k25-indepsum}
    y^d({I\cap H}) + \sum_{v \in I} y(v) = \sum_{v \in I'} y(v)\,.
  \end{equation}
  Since $y$ is a fractional clique in $R_i$ and $I'$ is an independent
  set, it follows that $\sum\limits_{v \in I'} y(v)\le1$. We construct $I'$
  in the following way:
  \vspace{-1mm}
  \begin{itemize}
  \item $I'$ is disjoint from $\Bigl\{\,(X,0): X\in\twvec{[p]\\q}\,\Bigr\}
    \cup \Bigl\{\,(X,3): X \in\twvec{[p]\\q}\,\Bigr\}$,
  \vspace{-1mm}
  \item $(X,1) \in I'$ if and only if $(X,3) \in I \cap N$ (observe that
    $(X,1) \notin I$), and
  \vspace{-1mm}
  \item $(X,2) \in I'$ if and only if $(X,2) \in I$.
  \end{itemize}
  \vspace{-1mm}
  By the choice of $y^d(H \cap I)$, and since
  $y((X,1))=\twvec{p-q\\ q}^{-1}$ for $X\in\twvec{[p]\setminus [q]\\ q}$,
  it follows that equation~\eqref{eq-d6k25-indepsum} holds. This completes
  the proof.
\end{proof}


\section{Distances congruent to one mod four}
\label{sect-upper-d1}

Analogously to Section~\ref{sect-upper-d0}, in this section we present
upper bounds on $g(k,d)$ for $d\equiv1\mod 4$ in the case that $k$ and $d$
satisfy $2\le k < 2 + \dfrac2{d-3}\,$. In Lemma~\ref{lemma-d0} we gave a
coloring strategy for universal graphs $U^n_{p,q,d}$ with $d$ even. In the
following lemma we adapt this strategy to odd values of $d$. Recall that
for odd~$d$, instead of identifying the bases of the rays in a universal
graph, we now connect them according to their labels. The main difference
in the new strategy is that to color the base of each ray of $U^n_{p,q,d}$,
we now use appropriately chosen subsets of the sets
$f_o(1), f_o(2), \dots, f_o(p)$, instead of using
$f_e(1), f_e(2), \dots, f_e(p)$.

\begin{lemma}
  \label{lemma-d1}
  Let $\eps$ be a positive real and $n$, $p$, $q$ and $d$ positive integers
  such that $d\ge5$, $d\equiv1\mod 4$ and $p/q\ge2$. If the conditions
  \begin{gather}
    \label{ineq-d1-og}
    2\le k < 2+\frac{1}{2d'-1}\quad \text{and}\\
    \label{ineq-d1-bound}
    \eps k \sum_{j=0}^{d'-1} (k-1)^{2j} \ge 1
  \end{gather}
  are satisfied, where $d'=\lfloor d/4\rfloor$ and $k=p/q$, then any
  fractional $(k+\eps)$-precoloring of the special vertices of
  $U_{p,q,d}^n$ can be extended to a fractional $(k+\eps)$-coloring of
  $U_{p,q,d}^n$.
\end{lemma}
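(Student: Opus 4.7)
The plan is to adapt Lemma~\ref{lemma-d0}'s construction to odd~$d$ along the lines foreshadowed in the preamble: we replace the base coloring built from~$f_e$ by one built from appropriately chosen subsets of $f_o(1),\dots,f_o(p)$. As in Lemma~\ref{lemma-d0}, by Proposition~\ref{prop-monotone} it suffices to treat the case when $\eps$ achieves equality in~\eqref{ineq-d1-bound}, and a short calculation shows that such $\eps$ also satisfies the single auxiliary bound $\eps k\sum_{j=0}^{d'-2}(k-1)^{2j}\le1$, which we shall need to guarantee the existence of the auxiliary functions introduced below.

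Let $C_1,\dots,C_{n{p\choose q}}$ be the given precoloring of the special vertices of $U^n_{p,q,d}$ and let $f_o$ be as in Proposition~\ref{prop-pseudorandom}. For each ray $R_i$ and each $a\in[p]$ we pick $F_i(a)\subseteq f_o(a)$ of measure $1/q$ containing all of $f_o(a)\cap C_i$; this is possible because $\|f_o(a)\cap C_i\|=1/p\le 1/q$. Put $Y_i:=[0,k+\eps)\setminus\bigcup_{a\in[p]}F_i(a)$, so that $\|Y_i\|=\eps$ and, crucially, $Y_i\cap C_i=\varnothing$. We color the base vertex $(A,2d')$ of~$R_i$ by $F_i(A):=\bigcup_{a\in A}F_i(a)$. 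For any two base vertices adjacent in~$U^n_{p,q,d}$, say $(A,2d')\in R_i$ and $(B,2d')\in R_j$ with $A\cap B=\varnothing$, the assigned sets $F_i(A)\subseteq f_o(A)$ and $F_j(B)\subseteq f_o(B)$ are disjoint because $f_o$ is a partition of $[0,k+\eps)$.

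Fix a ray $R_i$ and extend the base coloring to a fractional coloring $c_i$ of $R_i$ by mimicking Lemma~\ref{lemma-d0}, with $F_i$ playing the role of $f_e$ and $Y_i$ playing the role of~$Y$: set $f_{2d'}^i(a):=F_i(a)$, sequentially pick $h_j^i(a)\subseteq F_i(a)\cap C_i$ and $g_j^i(a)\subseteq F_i(a)\setminus C_i$ of exactly the measures used in Lemma~\ref{lemma-d0}, peel them off to define $f_\ell^i$ for $\ell\in[2d'-1]$, and color each $v\in V_\ell$ by the same formula as in Lemma~\ref{lemma-d0}. The auxiliary inequality above ensures that the $h_j^i(a)$'s fit inside $F_i(a)\cap C_i$ (of measure $1/p$) and the $g_j^i(a)$'s fit inside $F_i(a)\setminus C_i$ (of measure $(k-1)/p$). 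Disjointness of color sets across every edge of $R_i$ then follows verbatim from the argument in Lemma~\ref{lemma-d0}, using $F_i(a)\cap F_i(b)=\varnothing$ for $a\ne b$ and $Y_i\cap F_i([p])=\varnothing$.

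The main technical step is to verify $\|c_i(v)\|\ge 1$ at every vertex, and in particular to locate where the new inequality~\eqref{ineq-d1-bound} enters. For $\ell$ in $\{2,\dots,2d'\}$ even and $\ell$ in $\{3,\dots,2d'-1\}$ odd, the calculations are completely parallel to those in Lemma~\ref{lemma-d0} and collapse, via the same telescoping in~$k$ and~$(k-1)$, to $m_\ell=1$. The distinctive step is at level~$1$: since $Y_i\cap C_i=\varnothing$, the term $\|Y_i\setminus C_i\|$ equals $\eps$ rather than the $\eps-\eps/(k+\eps)$ one obtains from $Y\setminus C_i$ in Lemma~\ref{lemma-d0}, and this extra slack---together with the different intersection density $\|F_i(a)\cap C_i\|=1/p$ as opposed to $\|f_e(a)\cap C_i\|=1/(p+q\eps)$---collapses $m_1\ge 1$ to precisely~\eqref{ineq-d1-bound}. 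Performing this construction in every ray yields the required extension of the precoloring.
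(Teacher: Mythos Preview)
Your proof is correct and is essentially the paper's own argument. The only difference is cosmetic bookkeeping: where you carve out $F_i(a)\subseteq f_o(a)$ of measure $1/q$ containing $f_o(a)\cap C_i$ and set $Y_i:=[0,k+\eps)\setminus F_i([p])$ so as to reuse the formulas of Lemma~\ref{lemma-d0} verbatim, the paper instead introduces one extra peeling function $g_{d'}(a)\subseteq f_o(a)\setminus C_i$ of measure $\eps/p$, sets $f_{2d'}(a):=f_o(a)\setminus g_{d'}(a)$, and lets the odd-level color sets run over $\bigcup_{j=(\ell+1)/2}^{d'}g_j([p])$ with no separate $Y$; your $Y_i$ is exactly their $g_{d'}([p])$ and your $F_i(a)$ is exactly their $f_{2d'}(a)$, so the resulting colorings coincide.
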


\begin{proof}
  Again, we only need to consider $\eps$ that satisfy~\eqref{ineq-d1-bound}
  with equality, i.e., we can take
  \[\eps=\biggr(k\sum_{j=0}^{d'-1} (k-1)^{2j}\biggr)^{-1}.\]
  Note that this choice trivially gives
  \begin{equation}
    \label{ineq-d1-aux}
    \eps k \sum_{j=0}^{d'-2} (k-1)^{2j} \le 1.
  \end{equation}

  Considering the universal graph $U_{p,q,d}^n$, let $C_i$, for $i\in
  \Bigl[n\twvec{p\\q}\Bigr]$, be a~precoloring of the special vertices and
  let $f_o$ be a~mapping as described in
  Proposition~\ref{prop-pseudorandom}. In what follows, for each ray $R_i$,
  which is isomorphic to $R_{p,q,2d'}$, we find a fractional coloring $c_i$
  that satisfies the following: for every set $A \in\twvec{[p]\\q}$, each
  vertex $v=(A,2d')$ of the base of $R_i$ is colored by a subset of
  $f_o(A)$, and the special vertex of $R_i$ is colored by $C_i$. Since the
  universal graph $U_{p,q,d}^n$ is constructed by joining the vertices
  $(A,2d')$ and $(B,2d')$ from different rays for disjoint
  $A,B\in\twvec{[p]\\ q}$, the lemma follows from this claim.

  Fix a ray $R_i$ and let $s$ be the special vertex of $R_i$. For an
  integer $\ell \in [2d'-1]$, let $V_\ell$ be the set of vertices of $R_i$
  at distance $\ell$ from $s$, and let $V_{2d'}$ be the set of vertices of
  $R_i$ at distance at least $2d'$ from $s$. As in the proof of
  Lemma~\ref{lemma-d0}, the vertices of the base of $R_i$ form a subset of
  $V_{2d'}$ and the set $V_\ell$ forms an independent set in $R_i$ for
  $\ell \in [2d'-1]$.

  Analogously to the proof of Lemma~\ref{lemma-d0}, we construct functions
  $f_x:[p] \hookrightarrow 2^{[0,k+\eps)}$,
  $g_y:[p] \hookrightarrow 2^{[0,k+\eps)}$ and
  $h_z:[p] \hookrightarrow 2^{[0,k+\eps)}$, for $x \in [2d']$, $y\in [d']$
  and $z\in [d'-1]$ as follows. For $a \in [p]$ and
  $j = d'-1, d'-2, \dots, 1$, we sequentially define
  \vspace{-1mm}
  \begin{itemize}
  \item $g_{d'}(a)$ as an arbitrary subset of $f_o(a)\setminus C_i$ of
    measure $\dfrac{\eps}{p}\,$,
  \vspace{-1mm}
  \item $g_{j}(a)$ as an arbitrary subset of $(f_o(a)\setminus C_i)
    \setminus \bigcup\limits_{j'=j+1}^{d'}g_{j'}(a)$\\[-2mm]
    \hspace*{\fill} of measure $\dfrac{\eps k}{p}(k-1)^{2(d'-j)-1}$,
  \vspace{-1mm}
  \item $h_{j}(a)$ as an arbitrary subset of $(f_o(a)\cap C_i) \setminus
    \bigcup\limits_{j'=j+1}^{d'-1}h_{j'}(a)$\\[-2mm]
    \hspace*{\fill} of measure $\dfrac{\eps k}{p}(k-1)^{2(d'-j)-2}$,
  \end{itemize}
  \vspace{-1mm}
  and then:
  \vspace{-1mm}
  \begin{itemize}
  \item $f_{2d'}(a):=f_o(a)\setminus g_{d'}(a)$,
  \vspace{-1mm}
  \item $f_{2j+1}(a):=f_{2j+2}(a)\setminus h_j(a)$, and
  \vspace{-1mm}
  \item $f_{2j}(a):=f_{2j+1}(a)\setminus g_j(a)$.
  \end{itemize}
  \vspace{-1mm}
  Finally, we set $f_1(a):=f_2(a)\setminus C_i$ for every $a\in [p]$. Since
  the measure of~$f_o(a)$ is $(k+\eps)/p$ and the measure of
  $f_o(a)\cap C_i$ is $1/p$, these functions exist if and only if condition
  \eqref{ineq-d1-aux} is satisfied. The described construction of the
  functions is sketched in Figure~\ref{fig-f-d1}.

  \begin{figure}
    \begin{center}
      \epsfxsize=135mm
      \epsfbox{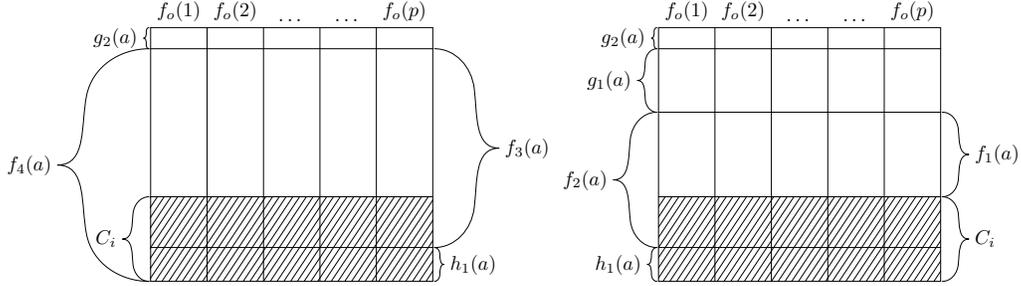}
    \end{center}
    \caption{The construction of a fractional coloring in
      Lemma~\ref{lemma-d1} for $d=9$.}
    \label{fig-f-d1}
  \end{figure}

  Let $\ell \in [2d']$ and $v=(A,\ell') \in V_\ell$. Recall that
  $\ell'\le \ell$. If $\ell$ is even, we set
  \begin{equation*}
    c_i(v):=f_\ell(A) \cup \bigcup_{j=\ell/2}^{d'-1} h_j([p])\,;
  \end{equation*}
  and if $\ell$ is odd, we set
  \begin{equation*}
    c_i(v):=f_\ell(A) \cup \bigcup_{j={(\ell+1)/2}}^{d'} g_j([p])\,.
  \end{equation*}
  Finally, we set $c_i(s):=C_i$.

  As in the proof of Lemma~\ref{lemma-d0}, we claim that $\|c_i(v)\|\ge1$
  for every vertex $v\in V(R_i)$. First, it follows from the definition
  that $\|c_i(s)\|=1$. Next, for a fixed $\ell \in [2d']$, the color sets
  of any two vertices $u$ and $v$ from $V_\ell$ have the same measure. Let
  $m_\ell$ be the measure of vertices in $V_\ell$. Then
  $m_{2d'}=\dfrac{k+\eps}{k} - \dfrac{\eps}{k} = 1$. Next, for $\ell \in
  \{2,3,\dots,2d'-1\}$ we have $m_\ell=1$, by analogous calculations as in
  the proof of Lemma~\ref{lemma-d0}. Finally, for $m_1$ we obtain
  \begin{equation*}
    m_1=1 - \frac{1}{k} - (k-1) \cdot \eps
    k\sum_{j=1}^{d'-1}(k-1)^{2j-1} + \eps = 1 - \frac{1}{k}
    +\eps\sum_{j=0}^{d'-1}(k-1)^{2j}\,,
  \end{equation*}
  which is at least one by \eqref{ineq-d1-bound}.

  An analysis analogous to that presented in the proof of
  Lemma~\ref{lemma-d0} yields that $c_i$ assigns disjoint sets to any two
  adjacent vertices in $R_i$. Therefore, the coloring $c_i$ is a fractional
  coloring of the ray $R_i$ with the required properties.
\end{proof}

As in Section~\ref{sect-upper-d0}, applying Proposition~\ref{prop-homo}
yields the following theorem.

\begin{theorem}
  \label{thm-d1}
  Let $d\ge5$ be an integer such that $d\equiv1\mod4$, $k$ a rational and
  $\eps$ a positive real such that conditions \eqref{ineq-d1-og} and
  \eqref{ineq-d1-bound} are satisfied, where $d'=\lfloor d/4\rfloor$. If
  $G$ is a fractionally $k$-colorable graph and $W$ is a subset of its
  vertex set with pairwise distance at least $d$, then any fractional
  $(k+\eps)$-precoloring of $W$ can be extended to a fractional
  $(k+\eps)$-coloring of $G$.
\end{theorem}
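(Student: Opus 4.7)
The plan is to deduce Theorem~\ref{thm-d1} by lifting the coloring supplied by Lemma~\ref{lemma-d1} on the universal graph back to $G$ via the homomorphism given by Proposition~\ref{prop-homo}, exactly as was done to obtain Theorem~\ref{thm-d0} from Lemma~\ref{lemma-d0}. The idea is that $U^n_{p,q,d}$ is universal enough to capture all fractionally $k$-colorable graphs with $n$ vertices at pairwise distance at least $d$, so any extension result proved for the universal graph transfers automatically.

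Concretely, I would first write $k=p/q$ for positive integers $p,q$ and set $n:=|W|$. Proposition~\ref{prop-homo} then furnishes a homomorphism $h\colon G\to U^n_{p,q,d}$ such that $h$ restricted to $W$ is injective and maps $W$ into the set of special vertices of $U^n_{p,q,d}$. Using this injectivity, I would transport the given fractional $(k+\eps)$-precoloring of $W$ to a fractional $(k+\eps)$-precoloring of $h(W)$ (assigning to $h(w)$ the color set of $w$); the remaining special vertices of $U^n_{p,q,d}$ can be precolored arbitrarily, say by any fixed set of measure one in $[0,k+\eps)$.

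Next, since the hypotheses~\eqref{ineq-d1-og} and~\eqref{ineq-d1-bound} of Lemma~\ref{lemma-d1} are assumed to hold, the lemma produces an extension $c_U\colon V(U^n_{p,q,d})\to 2^{[0,k+\eps)}$ of this precoloring to a fractional $(k+\eps)$-coloring of the entire universal graph. I would then define $c(v):=c_U(h(v))$ for every $v\in V(G)$. Because $h$ is a graph homomorphism, adjacent vertices of $G$ are sent to adjacent vertices of $U^n_{p,q,d}$, so the disjointness of color sets is preserved, and clearly $\|c(v)\|=\|c_U(h(v))\|=1$; thus $c$ is a fractional $(k+\eps)$-coloring of $G$. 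Finally, for $w\in W$ the definition of $c$ gives $c(w)=c_U(h(w))$, which by construction is the prescribed precoloring, so $c$ extends the given precoloring of $W$.

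There is really no serious obstacle here: the only nontrivial ingredient is Lemma~\ref{lemma-d1} itself, which has already been established; Proposition~\ref{prop-homo} does the rest. The only small point to verify carefully is that the extra, unspecified special vertices of $U^n_{p,q,d}$ (those not in $h(W)$) can be given an arbitrary legal precoloring without affecting the conclusion, which is automatic since Lemma~\ref{lemma-d1} works for \emph{any} precoloring of \emph{all} special vertices.
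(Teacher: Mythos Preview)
Your proposal is correct and follows exactly the approach the paper uses (indeed the paper simply says ``applying Proposition~\ref{prop-homo} yields the following theorem'' and leaves the details implicit, mirroring the proof of Theorem~\ref{thm-d0}). Your explicit remark about precoloring the special vertices outside $h(W)$ arbitrarily is a detail the paper glosses over but is handled in precisely the way you describe.
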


Note that for $d=5$, the theorem shows that $g(k,5)\le1/k$ for $k\in[2,3)$.


\section{Distances congruent to three mod four}
\label{sect-upper-d3}

As in the previous sections, we start with showing upper bounds on $g(k,d)$
for $d\equiv3 \mod 4$ such that $k$ and $d$ satisfy the condition
$2\le k < 2 + \dfrac2{d-3}\,$. Observe that the parity of the length of a
ray in $U^n_{p,q,d}$ for $d\equiv 3\mod 4$ is the same as for
$d\equiv 2\mod 4$, and that it is different to the one for
$d\equiv 1\mod 4$. (Hence it is also different to the one for
$d\equiv 0\mod 4$.) Therefore, for this choice of values of $k$ and $d$, we
modify the coloring strategy used in Lemma~\ref{lemma-d1} in a similar way
to how we modified the strategy from Lemma~\ref{lemma-d0} to prove
Lemma~\ref{lemma-d2}.

\begin{lemma}
  \label{lemma-d3}
  Let $\eps$ be a positive real and $n$, $p$, $q$ and $d$ positive integers
  such that $d\equiv3 \mod 4$ and $p/q\ge2$. If the conditions
  \begin{gather}
    \label{ineq-d3-og}
    2\le k < 2+\dfrac{1}{2d'} \quad\text{and}\\
    \label{ineq-d3-bound}
    \eps + \eps k \sum_{j=0}^{d'-1} (k-1)^{2j+1} \ge 1
  \end{gather}
  are satisfied, where $d'=\lfloor d/4\rfloor$ and $k=p/q$, then any
  fractional $(k+\eps)$-precoloring of the special vertices of
  $U_{p,q,d}^n$ can be extended to a fractional $(k+\eps)$-coloring of
  $U_{p,q,d}^n$.
\end{lemma}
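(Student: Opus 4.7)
My plan is to closely mirror the proof of Lemma~\ref{lemma-d2}, with two modifications dictated by the differences between the even- and odd-$d$ cases: (i)~for odd~$d$ the bases of different rays in $U^n_{p,q,d}$ are joined by edges rather than identified, and (ii)~the length of each ray $R_{p,q,(d-1)/2}$ equals $2d'+1$. Consequently, the base of each ray should be colored by a subset of $f_o(A)$ (rather than $f_e(A)$) of measure one, obtained via Proposition~\ref{prop-pseudorandom}; two such subsets corresponding to disjoint $A,B\in\twvec{[p]\\q}$ are then automatically disjoint, taking care of the disjointness condition across rays.

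By Proposition~\ref{prop-monotone} it suffices to treat the case in which $\eps$ satisfies~\eqref{ineq-d3-bound} with equality. As in the earlier lemmas, such an $\eps$ will also satisfy the auxiliary inequalities
\[
\eps + \eps k \sum_{j=0}^{d'-2}(k-1)^{2j+1} \le 1
\quad\text{and}\quad
\eps k \sum_{j=0}^{d'-1}(k-1)^{2j} \le 1,
\]
which I will use to guarantee the existence of the sets $g_y(a)$ and $h_z(a)$ in the construction. Next, I would fix a ray $R_i$, let $s$ be its special vertex, and partition $V(R_i)\setminus\{s\}$ into sets $V_1,\ldots,V_{2d'+1}$ by distance from $s$; by~\eqref{ineq-d3-og} and Proposition~\ref{prop-indep}, each $V_\ell$ with $\ell\in[2d']$ is independent, and $V_{2d'+1}$ contains the base.

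The functions $f_x$, $g_y$, $h_z$ would be defined analogously to those in Lemma~\ref{lemma-d2}, but with $f_o$ in place of $f_e$ and with an initial ``trim'' $g_{d'+1}(a)\subseteq f_o(a)\setminus C_i$ of measure $\eps/p$ so that the base vertex $(A,2d'+1)$, colored by $f_{2d'+1}(A):=f_o(A)\setminus g_{d'+1}(A)$, has measure one. For $a\in[p]$ and $j$ decreasing from~$d'$ to~$1$, I would choose $h_j(a)\subseteq f_o(a)\cap C_i$ and $g_j(a)\subseteq f_o(a)\setminus C_i$, disjoint from previous choices and of measures matching the telescoping pattern of Lemma~\ref{lemma-d2}, and iterate $f_{2m+1}(a):=f_{2m+2}(a)\setminus h_m(a)$ and $f_{2m}(a):=f_{2m+1}(a)\setminus g_m(a)$, ending with $f_1(a):=f_2(a)\setminus C_i$. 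The coloring $c_i$ then assigns $f_\ell(A)\cup\bigcup_j h_j([p])$ at even distance $\ell$, $f_\ell(A)\cup\bigcup_j g_j([p])$ at odd distance $\ell\in[2d'-1]$, $f_{2d'+1}(A)$ at the base, and $C_i$ at~$s$.

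Finally, I would verify by the same telescoping-sum computation as in Lemma~\ref{lemma-d2} that $\|c_i(v)\|\ge 1$ at every vertex $v$, the equality form of~\eqref{ineq-d3-bound} entering only in the innermost case $\ell=1$. Disjointness of colors between adjacent vertices inside $R_i$ follows because the $g_j$- and $h_j$-parts added to $c_i$ at distance $\ell$ are precisely those previously removed in the iterative definition of $f_\ell$; disjointness between adjacent base vertices of different rays is automatic since they carry disjoint Kneser labels. The main obstacle will be calibrating the measures of the initial trim $g_{d'+1}$ and of the remaining $g_j, h_j$ so that the telescoping yields exactly measure~$1$ at $\ell=1$, which is the step where condition~\eqref{ineq-d3-bound} is actually invoked.
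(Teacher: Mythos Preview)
Your overall plan is right and follows the paper's argument closely, but there is one calibration error that matters: the initial trim should be taken \emph{inside} $C_i$, not outside it. For $d\equiv 3\bmod 4$ the base of each ray sits at the \emph{odd} level $2d'+1$, whereas for $d\equiv 1\bmod 4$ (Lemma~\ref{lemma-d1}) it sits at the even level~$2d'$. In the alternating scheme you describe, odd levels receive extra $g$-mass (lying in $f_o\setminus C_i$) and even levels receive extra $h$-mass (lying in $f_o\cap C_i$). Hence the set that is ``removed at the base and added back one level down'' must be of $h$-type here, not of $g$-type: at the even level $2d'$ you add $\bigcup_j h_j([p])\subseteq C_i$, and this must be disjoint from the base color $f_{2d'+1}(A)$. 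With your choice $g_{d'+1}(a)\subseteq f_o(a)\setminus C_i$, however, $f_{2d'+1}(a)=f_o(a)\setminus g_{d'+1}(a)$ still contains all of $f_o(a)\cap C_i$, so for $a\in A$ any $h_j(a)$ you add at level~$2d'$ meets $f_{2d'+1}(A)$; and if instead you add nothing at level~$2d'$, the color there has measure strictly less than~$1$ (since $g_{d'}$ has already been removed in your iteration $f_{2d'}=f_{2d'+1}\setminus g_{d'}$).

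The paper resolves this by letting the initial trim be $h_{d'}(a)\subseteq f_o(a)\cap C_i$ of measure $\eps/p$, setting $f_{2d'+1}(a):=f_o(a)\setminus h_{d'}(a)$, and at the even level~$2d'$ adding $h_{d'}([p])$, which is now disjoint from $f_{2d'+1}(A)$ by construction. After that, the iteration $f_{2m}(a):=f_{2m+1}(a)\setminus g_m(a)$ and $f_{2m+1}(a):=f_{2m+2}(a)\setminus h_m(a)$ for $m<d'$, together with the coloring rules $c_i(v)=f_\ell(A)\cup\bigcup_{j=\ell/2}^{d'}h_j([p])$ for even $\ell$ and $c_i(v)=f_\ell(A)\cup\bigcup_{j=(\ell+1)/2}^{d'}g_j([p])$ for odd~$\ell$, proceed exactly as you outline; your auxiliary inequalities and the telescoping measure check go through unchanged. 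In short: swap your $g_{d'+1}\subseteq f_o\setminus C_i$ for $h_{d'}\subseteq f_o\cap C_i$, and the rest of your sketch is correct.
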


\begin{proof}
  For the fourth time, we can limit ourselves to $\eps$ that give equality
  in~\eqref{ineq-d3-bound}:
  \[\eps=\biggl(1+ k \sum_{j=0}^{d'-1} (k-1)^{2j+1}\biggr)^{-1}.\]
  For later in the proof we observe that these $\eps$ trivially satisfy
  \begin{equation}
    \label{ineq-d3-aux}
    \eps + \eps k \sum_{j=0}^{d'-2} (k-1)^{2j+1} \le 1.
  \end{equation}

  For the universal graph $U_{p,q,d}^n$, let $C_i$, for $i\in
  \Bigl[n\twvec{p\\q}\Bigr]$, be a~precoloring of the special vertices and
  $f_o$ be a~mapping as described in Proposition~\ref{prop-pseudorandom}.
  Analogously to the proof of Lemma~\ref{lemma-d1}, for each ray $R_i$ we
  find a fractional coloring $c_i$ that satisfies the following: every vertex $v=(A,2d'+1)$ of
  the base of $R_i$ is colored by a subset of $f_o(A)$,
  where $A \in\twvec{[p]\\q}$,  and the special vertex of $R_i$ is colored by $C_i$.

  Fix a ray $R_i$ and let $s$ be the special vertex of $R_i$. For an
  integer $\ell \in [2d']$, let $V_\ell\subseteq V(R_i)$ be the set of
  vertices at distance $\ell$ from $s$, and let $V_{2d'+1}$ be the set of
  vertices of $R_i$ at distance at least $2d'+1$ from $s$. Similarly as in
  the proof of Lemma~\ref{lemma-d1}, the vertices of the base of $R_i$ form
  a subset of $V_{2d'+1}$ and $V_\ell$ forms an independent set in $R_i$
  for $\ell \in [2d']$.

  We now construct functions $f_x:[p] \hookrightarrow 2^{[0,k+\eps)}$,
  $g_y:[p] \hookrightarrow 2^{[0,k+\eps)}$ and
  $h_z:[p] \hookrightarrow 2^{[0,k+\eps)}$, for $x \in [2d'+1]$,
  $y\in [d']$ and $z\in [d']$ as follows. For $a \in [p]$ and
  $j=d'-1,d'-2,\dots,1$, we sequentially define:
 \vspace{-1mm}
  \begin{itemize}
  \item $h_{d'}(a)$ as an arbitrary subset of $f_o(a)\cap C_i$ of measure
    $\dfrac\eps{p}\,$, and
  \vspace{-1mm}
  \item $h_{j}(a)$ as an arbitrary subset of $(f_o(a)\cap C_i) \setminus
    \bigcup\limits_{j'=j+1}^{d'}h_{j'}(a)$\\[-2mm]
    \hspace*{\fill} of measure $\dfrac{\eps k}{p}(k-1)^{2(d'-j)-1}$.
  \end{itemize}
  \vspace{-1mm}
  Next, we sequentially define for $a \in [p]$ and $m = d',d'-1,\dots,1$
  \vspace{-1mm}
  \begin{itemize}
  \item $g_{m}(a)$ as an arbitrary subset of $(f_o(a)\setminus C_i)
    \setminus \bigcup\limits_{m'=m+1}^{d'}g_{m'}(a)$\\[-2mm]
    \hspace*{\fill} of measure $\dfrac{\eps k}{p}(k-1)^{2(d'-m)}$,
  \vspace{-1mm}
  \item $f_{2d'+1}(a):=f_o(a)\setminus h_{d'}(a)$,
  \vspace{-1mm}
  \item $f_{2m+1}(a):=f_{2m+2}(a)\setminus h_m(a)$ for $m < d'$, and
  \vspace{-1mm}
  \item $f_{2m}(a):=f_{2m+1}(a)\setminus g_m(a)$.
  \end{itemize}
  \vspace{-1mm}
  Finally, we define $f_1(a):=f_2(a)\setminus C_i$ for every $a\in [p]$.
  Similarly as in the proof of Lemma~\ref{lemma-d1}, these functions exist
  if and only if condition \eqref{ineq-d3-aux} is satisfied. The described
  construction of the functions is sketched in Figure~\ref{fig-f-d3}.

  \begin{figure}
    \begin{center}
      \epsfxsize=135mm
      \epsfbox{ext-d3.3}
    \end{center}
    \caption{The construction of a fractional coloring in
      Lemma~\ref{lemma-d3} for $d=7$.}
    \label{fig-f-d3}
  \end{figure}

  Let $\ell \in [2d']$ and $v=(A,\ell') \in V_\ell$. If $\ell$ is even, we
  set
  \begin{equation*}
    c_i(v):=f_\ell(A) \cup \bigcup_{j=\ell/2}^{d'} h_j([p])\,;
  \end{equation*}
  and if $\ell$ is odd, we set
  \begin{equation*}
    c_i(v):=f_\ell(A) \cup \Bigcup_{j={(\ell+1)/2}}^{d'} g_j([p])\,.
  \end{equation*}

  Also, set $c_i(s):=C_i$. An analysis analogous to that presented in the
  proof of Lemma~\ref{lemma-d0} yields that $c_i$ is a fractional coloring
  of the ray $R_i$ with the required properties.
\end{proof}

Lemma~\ref{lemma-d3} and Proposition~\ref{prop-homo} together provide the
following theorem.

\begin{theorem}
  \label{thm-d3}
  Let $d$ be a positive integer such that $d\equiv3\mod4$, $k$ a rational
  and $\eps$ a positive real such that conditions \eqref{ineq-d3-og} and
  \eqref{ineq-d3-bound} are satisfied, where $d'=\lfloor d/4\rfloor$. If
  $G$ is a fractionally $k$-colorable graph and $W$ is a subset of its
  vertex set with pairwise distance at least $d$, then any fractional
  $(k+\eps)$-precoloring of $W$ can be extended to a fractional
  $(k+\eps)$-coloring of $G$.
\end{theorem}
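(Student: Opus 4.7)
The plan is to reduce the general statement to the universal case already handled by Lemma~\ref{lemma-d3}, in exactly the same way that Theorem~\ref{thm-d0} is derived from Lemma~\ref{lemma-d0} and Theorem~\ref{thm-d2} from Lemma~\ref{lemma-d2}. All of the quantitative work has been done inside the lemma, so what remains is a soft, structural argument whose three steps are: pass to a universal graph, transport the precoloring, and pull back the resulting coloring.

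First, since $G$ has fractional chromatic number $k$ and $W$ is a set of vertices at pairwise distance at least $d\ge 3$, I would apply Proposition~\ref{prop-homo} to obtain positive integers $p$ and $q$ with $k=p/q$, together with a homomorphism $h : G \to U^{|W|}_{p,q,d}$ that sends $W$ injectively onto a collection of $|W|$ distinct special vertices of $U^{|W|}_{p,q,d}$. I would then transport the given fractional $(k+\eps)$-precoloring of $W$ to a fractional $(k+\eps)$-precoloring of the image $h(W)$: for each $w\in W$, assign to the special vertex $h(w)$ the very same color set that $w$ was given. Injectivity of $h|_W$ and the fact that the target vertices are pairwise distinct special vertices make this assignment well-defined.

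Next, the hypotheses of the theorem guarantee that conditions~\eqref{ineq-d3-og} and~\eqref{ineq-d3-bound} hold for the chosen $k$, $\eps$ and $d$, so, taking $n=|W|$, Lemma~\ref{lemma-d3} applies and yields an extension of this precoloring to a fractional $(k+\eps)$-coloring $c_U$ of the whole universal graph $U^{|W|}_{p,q,d}$. Finally, I would pull $c_U$ back along $h$ by setting $c(v):=c_U(h(v))$ for every $v\in V(G)$. Since $h$ is a homomorphism, any two adjacent vertices of $G$ are mapped to adjacent vertices of $U^{|W|}_{p,q,d}$, which receive disjoint color sets under~$c_U$; hence $c$ is a fractional $(k+\eps)$-coloring of $G$. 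By construction it agrees with the prescribed precoloring on $W$, completing the proof.

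There is really no hard step here, because the substance of the argument is already contained in Lemma~\ref{lemma-d3} and Proposition~\ref{prop-homo}. The only point that must be explicitly checked, rather than merely asserted, is that the homomorphism produced by Proposition~\ref{prop-homo} indeed takes $W$ injectively to distinct special vertices, so that the transported precoloring is consistent; but this is exactly what the proposition guarantees, so the reduction goes through without friction.
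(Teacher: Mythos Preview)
Your proposal is correct and follows exactly the approach the paper intends: it is the straightforward combination of Proposition~\ref{prop-homo} and Lemma~\ref{lemma-d3}, spelled out precisely as in the detailed proof of Theorem~\ref{thm-d0}. The paper itself merely states that ``Lemma~\ref{lemma-d3} and Proposition~\ref{prop-homo} together provide the following theorem'' without repeating the argument, so your write-up is in fact a faithful elaboration of what the paper leaves implicit.
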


For $d=7$, the theorem means that $g(k,7)\le\dfrac1{k^2-k+1}$ for
$k\in[2,2.5)$. We close this section by showing an upper bound on $g(k,7)$
for $k\in [2.5,3)$.

\begin{theorem}
  \label{thm-d7k25}
  Let $k$ be a positive rational less than 3 and $\eps$ a positive real
  such that $\eps \ge \dfrac1{k+1}\,$. If $G$ is a fractionally
  $k$-colorable graph and $W$ is a subset of its vertex set with pairwise
  distance at least seven, then any fractional $(k+\eps)$-precoloring of
  $W$ can be extended to a fractional $(k+\eps)$-coloring of $G$.
\end{theorem}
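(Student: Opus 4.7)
The plan is to combine the reduction to the universal graph with a ray-by-ray coloring scheme in the spirit of Theorem~\ref{thm-d6k25}. By Proposition~\ref{prop-monotone} I may assume $\eps = 1/(k+1)$, and by Proposition~\ref{prop-homo} it suffices to show that every precoloring $C_1,\ldots,C_{n\binom{p}{q}}$ of the special vertices of $G = U^n_{p,q,7}$ with $p/q = k$ extends. Apply Proposition~\ref{prop-pseudorandom} to obtain the partitions $f_o, f_e$ and set $Y := [0, k+\eps) \setminus f_e([p])$, so that $\|Y\| = \eps$.

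The key structural observation at $d=7$ is that the cross-ray edges of $U^n_{p,q,7}$ only link $(A,3)_i$ to $(B,3)_j$ when $A \cap B = \varnothing$, and distinct rays share no other vertices. Thus, if for every ray $R_i \cong R_{p,q,3}$ I construct a fractional $(k+\eps)$-coloring $c_i$ with $c_i(s_i) = C_i$ and $c_i((A,3)_i) = f_e(A)$ (the same $f_e(A)$ regardless of $i$), the Kneser-disjointness of the sets $f_e(A)$ automatically handles the cross-ray edges, and the $c_i$ piece together into a fractional $(k+\eps)$-coloring of $G$. The whole problem therefore reduces to coloring a single ray, which by symmetry I may take to be $R_{p,q,3}^{[q]}$, with the boundary data described above.

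For the interior coloring I would mimic the strategy of Theorem~\ref{thm-d6k25}: introduce auxiliary functions $g, h: [q] \hookrightarrow 2^{[0,k+\eps)}$ carving small pieces out of $f_e(j)\setminus C_i$ and out of an appropriate subset of $C_i$ (plausibly $Y \cap C_i$ together with a portion of $f_e([p])\cap C_i$), and assign the special vertex the color $C_i$, each distance-$\ge 3$ vertex $(A,\ell')$ the color $f_e(A)$, each distance-$2$ vertex the color $(f_e(A)\setminus g(A\cap[q]))\cup h(A\cap[q])$, and each distance-$1$ vertex $(A,1)$ a color inside $[0,k+\eps)\setminus C_i$ built from $f_e(A)\setminus C_i$, $Y\setminus C_i$ and $g([q])$, suitably augmented. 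The measures of $g(j)$ and $h(j)$ should be calibrated so that the measure-$\ge 1$ requirement at the distance-$1$ vertices becomes exactly $\eps(k+1)\ge 1$, matching the hypothesis.

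\textbf{Main obstacle.} The principal technical challenge is that for $k\in[2.5,3)$, Proposition~\ref{prop-indep} no longer makes $V_2$ independent. Hence disjointness has to be verified not only across adjacent distance-classes, as in Lemma~\ref{lemma-d0}, but also for Kneser-adjacent pairs $(A,\ell_1),(B,\ell_2)\in V_2$ with $A\cap B=\varnothing$. Indexing the swap pieces $g$ and $h$ by $A\cap[q]$ (rather than by $A$ itself) is what ensures this extra disjointness: at a distance-$2$ vertex $(A,\ell')$ the color differs from $f_e(A)$ only in pieces indexed by $A\cap[q]$, and for $A\cap B=\varnothing$ these index sets are disjoint subsets of $[q]$. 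The remaining case analysis over adjacent distance-pairs $(\ell_1,\ell_2)\in\{(0,1),(1,2),(2,2),(2,3)\}$, together with the arithmetic check of the measure bound, should then follow a pattern parallel to the disjointness verifications in the proofs of Lemma~\ref{lemma-d0} and Theorem~\ref{thm-d6k25}.
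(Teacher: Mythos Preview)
Your reduction to the universal graph and the ray-by-ray strategy are correct, and you have correctly identified the main combinatorial obstacle (that $V_2$ need not be independent for $k\ge 2.5$) together with its resolution (index the swap pieces by $A\cap[q]$). However, there is a genuine gap in the quantitative part: choosing $f_e$ and coloring every ray's base by the \emph{same} map $A\mapsto f_e(A)$ cannot deliver the bound $\eps\ge 1/(k+1)$.

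Here is why. If your scheme produces a fractional $(k+\eps)$-coloring of $U^n_{p,q,7}$, then, since every base vertex $(A,3)_i$ carries the ray-independent color $f_e(A)$, identifying the bases yields a fractional $(k+\eps)$-coloring of $U^n_{p,q,6}$ with the same precoloring of the special vertices. By Theorem~\ref{thm-lb-d6k25}, for $k\in[2.5,3)$ there exist precolorings of $U^n_{p,q,6}$ admitting no extension whenever $\eps$ lies below the positive root $\eps_0$ of $x=1/(k+x)$; and $1/(k+1)<\eps_0$ because $\eps_0<1$. Hence for those precolorings your scheme must fail at every $\eps\in\bigl[1/(k+1),\eps_0\bigr)$. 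The phrase ``suitably augmented'' cannot help: since $\|f_e(A)\|=1$, the requirement that base vertices receive sets of measure one forces $c_i((A,3))=f_e(A)$ exactly, leaving no slack to trade.

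The missing idea is to exploit the one structural difference between $d=6$ and $d=7$: for odd $d$ the bases of different rays are merely joined by Kneser-type edges rather than identified, so the base coloring may depend on the ray. The paper therefore uses $f_o$, whose blocks have measure $(k+\eps)/p$, so that $\|f_o(A)\|=(k+\eps)/k>1$. The base of $R_i$ is colored by $f_o(A)\setminus g_2(A)$, where the excess $g_2(a)$ (of measure $\eps/p$) is taken inside $f_o(a)\cap C_i$ for $a\in[p]\setminus[q]$ and inside $f_o(a)\setminus C_i$ for $a\in[q]$. The pieces $g_2([p]\setminus[q])\subset C_i$ freed at level~$3$ are repartitioned as the sets $h(j)$ added at level~$2$, which in turn frees $g_1(j)\subset f_o(j)\setminus C_i$ (of measure $\eps(k-1)/p$) for level~$1$. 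The level-$1$ measure then equals
\[
\|f_o(A)\setminus C_i\|+\|g_2([q])\|+\|g_1([q])\|
=\Bigl(1-\frac{1-\eps}{k}\Bigr)+\frac{\eps}{k}+\frac{\eps(k-1)}{k}
=1-\frac{1-\eps}{k}+\eps,
\]
and $1-\dfrac{1-\eps}{k}+\eps\ge1$ is exactly $\eps\ge\dfrac{1}{k+1}$.
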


\begin{proof}
  By Proposition~\ref{prop-homo}, it is enough to consider the universal
  graphs $U_{p,q,6}^n$, where $p/q=k$ and $n\in \N$, and an arbitrary
  precoloring of its special vertices. Furthermore, we may assume
  $\eps \le 1$, since $g(k,3)=1$ for every $k\ge2$ by
  Theorem~\ref{thm-ext1}.

  As in the proofs of Lemmas~\ref{lemma-d1} and \ref{lemma-d3}, let $C_i$,
  for $i\in \Bigl[n\twvec{p\\q}\Bigr]$, be a precoloring of the special
  vertices and let $f_o$ be a~mapping as described in
  Proposition~\ref{prop-pseudorandom}. For each ray $R_i$ we find a
  fractional coloring $c_i$ that satisfies the following: for every set
  $A \in\twvec{[p]\\q}$, each vertex $v=(A,3)$ of the base of $R_i$ is
  colored by a subset of $f_o(A)$, and the special vertex of $R_i$ is
  colored by $C_i$.

  Fix a ray $R_i$ and let $s$ be the special vertex of $R_i$. By symmetry,
  it is enough to consider the case where $R_i$ is a copy of
  $R_{p,q,3}^{[q]}$. We construct functions
  $g_2:[p]\hookrightarrow 2^{[0,k+\eps)}$,
  $g_1:[q]\hookrightarrow 2^{[0,k+\eps)}$ and
  $h:[q]\hookrightarrow 2^{[0,k+\eps)}$ as follows. For $j\in[q]$ and
  $j'\in[p]\setminus [q]$ we define:
  \vspace{-1mm}
  \begin{itemize}
  \item $g_2(j)$ as an arbitrary subset of $f_o(j)\setminus C_i$ of measure
    $\dfrac\eps{p}\,$,
  \vspace{-1mm}
  \item $g_2(j')$ as an arbitrary subset of $f_o(j')\cap C_i$ of measure
    $\dfrac\eps{p}\,$, and
  \vspace{-1mm}
  \item $g_1(j)$ as an arbitrary subset of $(f_o(j)\setminus C_i) \setminus
    g_2(j)$ of measure $\dfrac\eps{p}(k-1)$.
  \end{itemize}
  \vspace{-1mm}
  Note that these functions exist if and only if $\eps \le 1$. Next, we
  define sets $h(1), h(2), \dots, h(q)$ as an arbitrary equipartition of
  $g_2([p]\setminus[q])$ into~$q$ parts of measure $\dfrac\eps{p}(k-1)$.
  The described construction of the functions is sketched in
  Figure~\ref{fig-f-d7k25}.

  \begin{figure}
    \begin{center}
      \epsfxsize=120mm
      \epsfbox{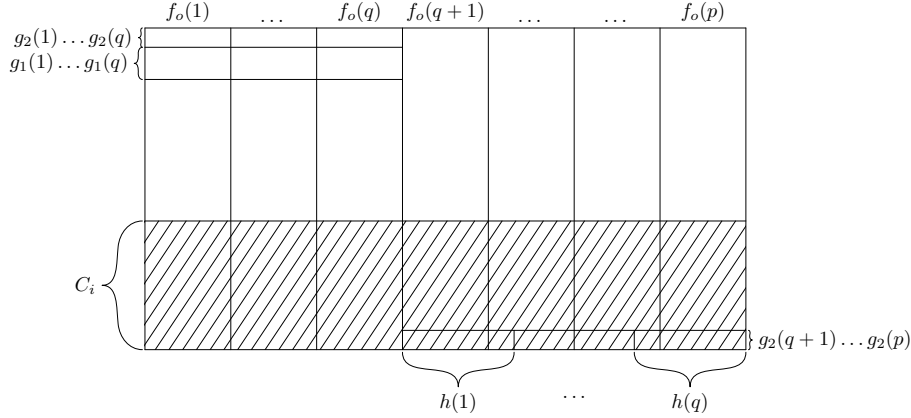}
    \end{center}
    \caption{The construction of a fractional coloring in
      Theorem~\ref{thm-d7k25}.}
    \label{fig-f-d7k25}
  \end{figure}

  Recall that the neighborhood of $s$ in $R_i$ forms an independent set.
  Since we assume that $s=([q],0)$, for every neighbor $(A,\ell')$ of $s$
  we have $A\cap[q]=\varnothing$ and $\ell'=1$. We now construct a
  fractional coloring of $R_i$. Let $v=(A,\ell')$ be a vertex of $R_i$ and
  let $\ell$ be the distance of $v$ and $s$ in $R_i$. We define $c_i(v)$ in
  the following way:
  \vspace{-1mm}
  \begin{itemize}
  \item if $\ell\ge3$, then $c_i(v):=f_o(A)\setminus g_2(A)$,
  \vspace{-1mm}
  \item if $\ell=2$, then $c_i(v):=\bigl(( f_o(A) \setminus g_2(A) )
    \setminus g_1(A\cap[q]) \bigr) \cup h(A \cap [q])$,
  \vspace{-1mm}
  \item if $\ell=1$, then $c_i(v):=( f_o(A) \setminus C_i) \cup g_1([q])
    \cup g_2([q])$, and
  \vspace{-1mm}
  \item $c_i(s):=C_i$.
  \end{itemize}
  \vspace{-1mm}
  An analysis analogous to that presented in the proof of
  Lemma~\ref{lemma-d0} yields that we assigned disjoint sets to any two
  neighbors in $R_i$, and that any vertex at distance at least two from $s$
  got a set of measure one. Furthermore, for every
  $A\in {[p]\setminus [q] \choose q}$ the set $f_o(A)\setminus C_i$ is
  disjoint from both $g_1([q])$ and $g_2([q])$, and it has measure
  $1-(1-\eps)/k$. Since $g_1([q]) \cup g_2([q])$ has measure $\eps$ and for
  $\eps \ge 1/(k+1)$ we have $\eps \ge (1-\eps)/k$, it follows that $c_i$
  is a fractional coloring with the required properties.
\end{proof}


\section{Open problems}
\label{sect-open}

Determining further values of $g(k,d)$ seems to require additional
knowledge on the structure of independent sets and fractional colorings in
Kneser graphs. We believe that our upper bounds presented in
Theorems~\ref{thm-d0}, \ref{thm-d2}, \ref{thm-d1}, \ref{thm-d3},
and~\ref{thm-d7k25} are tight. In particular, for distances $d=5,6$ and
$7$, we conjecture the following.

\begin{conjecture}
\label{conj-d5}
  For $k\in[2,3)$ we have $g(k,5)=\dfrac1{k}\,$.
\end{conjecture}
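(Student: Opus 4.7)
The upper bound $g(k,5)\le 1/k$ for $k\in[2,3)$ is immediate from Theorem~\ref{thm-d1}: taking $d=5$ and $d'=1$ in condition~\eqref{ineq-d1-bound} reads $\eps k\ge 1$ as a sufficient condition for extendability. The conjecture therefore reduces to proving the matching lower bound $g(k,5)\ge 1/k$, and my plan is to mirror the structure of the lower-bound proofs of Theorems~\ref{thm-lb-d4} and~\ref{thm-lb-d6k25}. Given $0<\eps<1/k$, choose rationals with $\eps\le\eps'<1/k$, positive integers $p,q$ with $k=p/q$, set $p'/q=k+\eps'$, $n=\binom{p'}{q}$, and $G=U^n_{p,q,5}$. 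The precoloring assigns to the $n$ special vertices at each position $X\in\binom{[p]}{q}$ all $n$ distinct sets $f(Y):=\bigcup_{i\in Y}[(i-1)/q,i/q)$ for $Y\in\binom{[p']}{q}$, distributing the precolors uniformly across $[0,k+\eps')$; by Proposition~\ref{prop-monotone} it suffices to show that this precoloring does not extend.

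Assume for contradiction that an extension $c\colon V(G)\to 2^{[0,k+\eps')}$ exists. Two structural observations drive the attack. First, because the clique-style edges coming from $L_{n\binom{p}{q},3}$ force $c((B,2))\cap c((B',2))=\varnothing$ whenever $B\cap B'=\varnothing$ for any pair of rays (including distinct rays $R_i\ne R_j$), the \emph{super-coloring} $T(B):=\bigcup_i c((B,2))$, where the union runs over the copies of this vertex across all rays, is itself a fractional $(k+\eps')$-coloring of the Kneser graph $K_{p/q}$. Second, for every ray $R_j$ with special vertex $s_j$ precolored by $C_j$ at position $X_j$, and every $v\in\binom{[p]}{q}$ with $v\cap X_j=\varnothing$, the vertex $w=(v,1)$ of $R_j$ is adjacent both to $s_j$ and to every base vertex $(B,2)$ of $R_j$ with $B\cap v=\varnothing$. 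Since $\{B:B\cap v=\varnothing\}$ induces a copy of $K_{(p-q)/q}$ inside $K_{p/q}$, the local quantity $V^{(j)}_v:=\bigcup_{B\cap v=\varnothing}c((B,2))$, with $(B,2)$ taken in $R_j$, has measure at least $k-1$; the requirement $\|c(w)\|\ge 1$ then forces $\|C_j\cap V^{(j)}_v\|\ge 1-\eps'$.

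The goal is to contradict this family of per-ray inequalities when $\eps'<1/k$. My plan is a two-step double-counting argument. First, apply Lemma~\ref{lemma-nb} to $T$ to obtain a vertex $v^*\in\binom{[p]}{q}$ whose Kneser neighborhood has total $T$-measure at least $k-1$, and use the fact that each point of $[0,k+\eps')$ lies in exactly $\binom{p'-1}{q-1}$ of the $n$ sets $C_j$ associated with a fixed $X_j$ to extract information about the typical overlap between $T$ and the precolors. Second, sum $\|C_j\cap V^{(j)}_{v^*}\|\ge 1-\eps'$ over the $n\binom{p-q}{q}$ rays $j$ with $X_j\cap v^*=\varnothing$, and match the resulting lower bound of order $n\binom{p-q}{q}(1-\eps')$ against an upper bound derived from the integrated measure of $T(N(v^*))$ and the uniform distribution of the precolors. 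One expects that these two bounds are simultaneously satisfiable exactly when $\eps'\ge 1/k$.

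The main obstacle, and the reason this statement remains a conjecture, is that the per-ray quantities $V^{(j)}_v$ depend on $j$ in a manner not directly controlled by $T$: base colorings in different rays may correlate with their assigned precolors $C_j$ in arbitrary ways, and a naive linearity argument yields only that the average of $\|C_j\cap V^{(j)}_v\|$ is bounded below by $(q/p')\cdot(k-1)$, a quantity consistent with $\eps'=1/k$ at the boundary rather than giving a strict violation. Overcoming this will likely require a finer LP-duality analysis in the spirit of Theorem~\ref{thm-lb-d6k25}: construct an explicit fractional clique on a suitable subgraph of $U^n_{p,q,5}$ that incorporates vertices from multiple rays and witnesses a dual objective value exceeding $k+\eps'$. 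Formulating an appropriate sharpening of Proposition~\ref{prop-fclique} that exploits the cross-ray clique structure of $L_{n\binom{p}{q},3}$ is the decisive step.
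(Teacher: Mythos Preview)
This statement is presented in the paper as a \emph{conjecture}, not a theorem: the paper gives no proof of the lower bound $g(k,5)\ge 1/k$. It establishes only the upper bound via Theorem~\ref{thm-d1} (which you correctly invoke) and supports the conjectured equality with numerical LP computations (Table~\ref{tab-linearprogram}). So there is no ``paper's own proof'' to compare against, and your candor in flagging the decisive step as missing is entirely appropriate.

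That said, your second structural observation contains an error that is more basic than the obstacle you identify later. You assert that for every ray $R_j$ and every $v$ disjoint from $X_j$, the set $V^{(j)}_v$ has measure at least $k-1$, on the grounds that $\{B:B\cap v=\varnothing\}$ induces a copy of $K_{(p-q)/q}$. But for $k=p/q\in[2,3)$ one has $p-q<2q$, so this induced Kneser graph is \emph{edgeless}: no two $q$-subsets of a $(p-q)$-element set can be disjoint. Its fractional chromatic number is $1$, not $k-1$, and nothing prevents all base vertices in $N_{K_{p/q}}(v)$ within a single ray from receiving the same color set, giving $\|V^{(j)}_v\|=1$. In that case the constraint at $w=(v,1)$ yields only $\|C_j\cap V^{(j)}_v\|\ge 3-k-\eps'$, which is vacuous for $k$ near $3$. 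This is exactly why Lemma~\ref{lemma-nb} is a nontrivial statement in the regime $k<3$ (the paper remarks that its content is trivial only for $k\ge 3$), and that lemma produces just one good vertex for one coloring, not a uniform per-ray, per-vertex bound.

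Your first observation about the super-coloring $T$ is sound (the cross-ray clique edges at level $2$ do force $T(B)\cap T(B')=\varnothing$ for disjoint $B,B'$), and applying Lemma~\ref{lemma-nb} to $T$ is legitimate. But, as you yourself note, $T$ does not control the individual $V^{(j)}_v$, and once the erroneous lower bound on $\|V^{(j)}_v\|$ is removed, the double-counting you outline no longer furnishes any contradiction. An LP-duality argument along the lines of Theorem~\ref{thm-lb-d6k25} remains the natural route; the paper's authors evidently did not find the right fractional clique for $d=5$ either.
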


\begin{conjecture}
\label{conj-d6}
  For $k\in[2,2.5)$ we have
  $g(k,6)=\frac12\bigl(\sqrt{k^2+4/(k-1)}-k\bigr)$.
\end{conjecture}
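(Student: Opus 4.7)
Since Theorem~\ref{thm-d2} already yields the upper bound $g(k,6)\le\eps^*:=\tfrac12(\sqrt{k^2+4/(k-1)}-k)$ (for $d=6$ one has $d'=1$, so~\eqref{ineq-d2-bound} reduces to $\eps(k-1)(k+\eps)\ge 1$, whose smallest positive root is $\eps^*$), the content of the conjecture is the matching lower bound $g(k,6)\ge\eps^*$. The plan is to adapt the LP-duality argument from the proof of Theorem~\ref{thm-lb-d6k25}, using Proposition~\ref{prop-indep} to exploit the new structural fact that for $k\in[2,2.5)$ the second neighborhood $V_2$ of the special vertex $s$ in each ray $R_{p,q,3}$ is independent, in addition to the first.

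The setup mirrors that of Theorem~\ref{thm-lb-d6k25}: fix a rational $\eps<\eps^*$, pick positive integers $p',q$ (with $q$ even and $kq\in\N$) so that $k+\eps\le p'/q<k+\eps^*$, set $p:=kq$, $\eps':=p'/q-k$, $n:=\binom{p'}{q}$, $G:=U^n_{p,q,6}$, and precolor the special vertices of each ray $R_{p,q,3}^Y$ by the $n$ distinct sets $f(X)=\bigcup_{i\in X}[(i-1)/q,i/q)$, $X\in\binom{[p']}{q}$. Assuming an extension $c$ exists, let $H$ be the common base, define $d_i(I)$, $e_i(I)$, $t_i(I)$ for every ray $R_i$ and $I\in\I_H$, and by the same double counting single out a ray $R_i$ with special vertex $s=([q],0)$ satisfying $\sum_{I\in\I_H}|N\cap I|\,d_i(I)\ge\binom{p-q}{q}\cdot q/p'$, where $N$ is the base-neighborhood of $[q]$. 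On this ray I would write exactly the same primal LP $P$ as in Theorem~\ref{thm-lb-d6k25}, for which feasibility of $c$ still gives $\sum_I w(I)\le k+\eps'$. The goal is a feasible dual solution of value at least $k+\frac{1}{(k-1)(k+\eps')}$; comparing with the primal bound then forces $\eps'(k-1)(k+\eps')\ge 1$ and contradicts $\eps'<\eps^*$.

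For the dual I would reuse the template of Theorem~\ref{thm-lb-d6k25}: place the Kneser-clique weight via the lift $g$ of $K_{p/q}$ into $R_i$ sending $X$ with $X\cap[q]=\varnothing$ to $(X,1)$ and the rest to $(X,2)$, and set $y^d(I):=|N\cap I|/\binom{p-q}{q}$, $y^e\equiv 0$. The dual constraint $y^d(I\cap H)+\sum_{v\in I}y(v)\le 1$ should again follow from the ``swap $(X,3)\in I\cap N$ for $(X,1)\in I'$'' trick, which for $k<2.5$ is supported on the union $V_1\cup V_2$ of two independent sets (this is where Proposition~\ref{prop-indep} enters). The key quantitative change is that the target value $k+\frac{1}{(k-1)(k+\eps')}$ is strictly smaller than the $k+\frac{1}{k+\eps'}$ of Theorem~\ref{thm-lb-d6k25}, so the supporting fractional clique on $K_{p/q}$ does not have to attain the maximum weight $k$ that Proposition~\ref{prop-fclique} delivered; the required total weight is only $k-\frac{k-2}{(k-1)(k+\eps')}$, and the $\frac{1}{k-1}$ factor appears naturally from the refined weighting put on the depth-two vertices $\{(X,2):|X\cap[q]|\ge 3q-p\}$ that now lie in $V_2$.

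The main obstacle is constructing this fractional clique on $K_{p/q}$---an analogue of Proposition~\ref{prop-fclique} valid for $p/q\in[2,2.5)$. The proof of Proposition~\ref{prop-fclique} relied on $p/q\ge 2.5$ in two essential ways: for connectivity of the auxiliary three-layer graph on $\{[q]\}\cup\binom{[p]\setminus[q]}{q}\cup\{X:|X\cap[q]|=q/2\}$ and for regularity of its middle layer, on which an Erd\H{o}s-Ko-Rado-type bound was invoked. For $k<2.5$ both of these features fail (for $k$ close to $2$, the ``middle layer'' shrinks essentially to the single vertex $[q]$), so a new weighting scheme, distributing mass across vertices of several different intersection sizes with $[q]$, will have to be designed. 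Verifying the independent-set inequality $x(I)\le 1$ for every independent set of $K_{p/q}$ will almost certainly require a new Erd\H{o}s-Ko-Rado-style bound parametrized by the intersection with a fixed $q$-set. This is where I expect most of the combinatorial work to lie, and presumably why the authors leave this statement as a conjecture.
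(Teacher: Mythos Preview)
This statement is Conjecture~\ref{conj-d6} in the paper; the authors do \emph{not} prove it. They establish only the upper bound (your derivation of it from Theorem~\ref{thm-d2} with $d'=1$ is correct) and support the lower bound by the numerical experiments in Table~\ref{tab-linearprogram}. There is therefore no ``paper's own proof'' to compare your proposal against.

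What you have written is a research outline, not a proof, and you say as much in your final paragraph. The high-level plan---rerun the LP-duality argument of Theorem~\ref{thm-lb-d6k25} and exploit, via Proposition~\ref{prop-indep}, that for $k<2.5$ the second neighbourhood $V_2$ of the special vertex is also independent---is exactly the natural attack, and you correctly isolate the missing ingredient: an analogue of Proposition~\ref{prop-fclique} (a fractional clique in $K_{p/q}$ with prescribed uniform values on $N([q])$) valid for $p/q\in[2,2.5)$. Your observation that the construction in Proposition~\ref{prop-fclique} breaks down for $k<2.5$ because the auxiliary layered graph loses both connectivity and regularity is accurate; indeed for $k<2.5$ the neighbourhood $N([q])$ itself becomes an independent set in $K_{p/q}$, which rigidly caps its total clique weight at~$1$ and forces any workable weighting to spread over more intersection classes, just as you anticipate.

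One point that would need tightening in an actual proof: your target dual value $k+\tfrac{1}{(k-1)(k+\eps')}$ and the derived ``required clique weight'' $k-\tfrac{k-2}{(k-1)(k+\eps')}$ are stated without explaining where the factor $\tfrac{1}{k-1}$ enters the dual. In the template of Theorem~\ref{thm-lb-d6k25} the averaging over precolorings yields only $\sum_I d_i(I)\,y^d(I)\ge \tfrac{1}{k+\eps'}$, so to reach your target you must either change $y^d$ (which alters the dual feasibility constraint and the swap argument) or obtain the extra $\tfrac{1}{k-1}$ from the clique side. Making this precise---and then actually constructing the fractional clique---is the substance of the open problem.
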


\begin{conjecture}
\label{conj-d7}
  For $k\in[2,2.5)$ we have $g(k,7)=\dfrac1{k^2-k+1}\,$, while for
  $k\in[2.5,3)$ we have $g(k,7)=\dfrac1{k+1}\,$.
\end{conjecture}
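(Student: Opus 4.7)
The upper bounds $g(k,7)\le 1/(k^2-k+1)$ for $k\in[2,2.5)$ and $g(k,7)\le 1/(k+1)$ for $k\in[2.5,3)$ are already furnished by Theorems~\ref{thm-d3} and~\ref{thm-d7k25}, so the plan is to prove matching lower bounds following the two templates developed in Sections~\ref{sect-lower-four} and~\ref{sect-lower-d6k25}.

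For $k\in[2,2.5)$, I would imitate the proof of Theorem~\ref{thm-lb-d4}. Fix $\eps<\eps'<1/(k^2-k+1)$, choose positive integers $q$ and $p'$ with $p'/q=k+\eps'$ and $p:=kq$, and consider the universal graph $G:=U^n_{p,q,7}$ with $n=\twvec{p'\\q}$, precoloring each special vertex by a distinct interval set $f(X)=\bigcup_{i\in X}[(i-1)/q,i/q)$ exactly as in Theorem~\ref{thm-lb-d4}. Assume for contradiction that $c$ is a fractional $(k+\eps')$-extension. Since $k<2.5$, Proposition~\ref{prop-indep} guarantees that in every ray the vertices at distances $1$ and $2$ from its special vertex form independent sets; only the base, at distance $3$, carries a Kneser graph structure. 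Apply Lemma~\ref{lemma-nb} to obtain a base vertex $v$ with $\|c(N_H(v))\|\ge k-1$, then by averaging over the $n$ rays sharing $v$'s base select one whose special-vertex precoloring $f(X)$ minimizes $\|c(N_H(v))\cap f(X)\|$. Propagating the resulting lower bound on $\|c(\cdot)\|$ back through the two independent layers toward $s$ should give the inequality $\eps'(1+k(k-1))\ge 1$, i.e.\ $\eps'\ge 1/(k^2-k+1)$, contradicting the choice of $\eps'$.

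For $k\in[2.5,3)$, I would mimic Theorem~\ref{thm-lb-d6k25}: use the same standard precoloring on $G=U^n_{p,q,7}$ (this time with $q$ even), assume an extension $c$ exists, and select by a double counting argument a ray $R_i$ obeying an averaged compatibility identity analogous to~\eqref{ineq-d6k25-doublecount}. Write the primal LP for the minimum total weight of a fractional coloring of $R_i$ that extends $c$ on the special vertex and the common base. In the dual, construct a feasible solution by lifting the fractional clique on $K_{p/q}$ provided by Proposition~\ref{prop-fclique} to the three-layer ray: assign the ``leaf'' weight $\twvec{p-q\\q}^{-1}$ to the appropriate base neighbors of the marked vertex $[q]$, and distribute the remaining $K_{p/q}$-weight across vertices at distances $1$ and $2$ from $s$ via an embedding $g:V(K_{p/q})\to V(R_i)$ that sends $X$ to level~$1$ when $X$ is disjoint from $[q]$ and to level~$2$ otherwise. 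Choosing $y^d(I):=|N\cap I|/\twvec{p-q\\q}$ and $y^e\equiv 0$ as in the $d=6$ proof, the dual objective should evaluate to an expression of the form $k+(1-\eps')/k$; combined with the primal bound $k+\eps'$, this will force $\eps'(k+1)\ge 1$, contradicting $\eps'<1/(k+1)$.

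The hardest step, in both cases, will be the dual feasibility verification for $k\in[2.5,3)$. The $d=7$ ray carries one more independent layer than the $d=6$ ray, so the ``independent-to-independent'' transformation $I\mapsto I'$ from Theorem~\ref{thm-lb-d6k25} must be replaced by one that pushes vertices of $I\cap N$ two steps toward~$s$; this requires a case analysis that tracks both the Kneser-type edges inside each layer and the complement-Kneser edges between consecutive layers, and may well demand a strengthened version of Proposition~\ref{prop-fclique} that simultaneously controls the fractional-clique weight on both the distance-$1$ and distance-$2$ images of $[q]$'s neighborhood. A secondary obstacle for $k\in[2,2.5)$ is that the walk from the base back to $s$ now traverses two independent layers rather than one, so the iterative bound on $\|c(\cdot)\|$ has to be composed carefully with the averaging over precolorings; any slack at an intermediate step would weaken the final inequality below the target $1/(k^2-k+1)$.
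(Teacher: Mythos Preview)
The statement you are attempting to prove is listed in the paper as a \emph{conjecture}, not a theorem; the paper does not supply a proof. The upper bounds $g(k,7)\le 1/(k^2-k+1)$ for $k\in[2,2.5)$ and $g(k,7)\le 1/(k+1)$ for $k\in[2.5,3)$ are indeed established (Theorems~\ref{thm-d3} and~\ref{thm-d7k25}), but the matching lower bounds are left open, with only numerical evidence offered (Table~\ref{tab-linearprogram}). There is thus no ``paper's own proof'' to compare against; your proposal is an outline for attacking an open problem.

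As an outline it is reasonable and follows the natural templates, but you have correctly identified the genuine obstacles yourself. One structural issue you gloss over in the $k\in[2,2.5)$ case: for $d=4$ the bases of all rays in $U^n_{p,q,4}$ are \emph{identified} into a single copy $H\cong K_{p/q}$, and Lemma~\ref{lemma-nb} is applied to that common base. For $d=7$ (odd) the bases are not identified; instead vertex $(A,3)$ in one ray is joined to $(B,3)$ in every other ray whenever $A\cap B=\varnothing$. So there is no single base graph on which to invoke Lemma~\ref{lemma-nb} directly, and the ``averaging over the $n$ rays sharing $v$'s base'' step does not parse as written. You would need either a neighborhood-expansion lemma for this much larger product structure or an additional averaging layer over bases. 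For $k\in[2.5,3)$ the ray $R_{p,q,3}$ has the same length as in the $d=6$ case, so your dual-feasibility concern is exactly the crux: the independent set $I$ in the LP now lives in a graph where the base layer is connected to \emph{all} other rays, not just within a single Kneser copy, and the transformation $I\mapsto I'$ from Theorem~\ref{thm-lb-d6k25} must respect those extra edges. Whether a suitable strengthening of Proposition~\ref{prop-fclique} exists is precisely what the paper leaves unresolved.
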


Let us give some additional support for
Conjectures~\ref{conj-d5}--\ref{conj-d7}, provided by numerical
computations. Fix $\eps > 0$, integers $n$, $p$, $q$ and $d$ such that
$k=p/q\in[2,3)$, and a fractional precoloring $C_i\subseteq[0,k+\eps)$, for
$i\in\Bigl[n\twvec{p\\q}\Bigr]$, of the special vertices of the universal
graph~$U^n_{p,q,d}$. Denote by $R_i$ the ray $U^n_{p,q,d}$ that has the
special vertex precolored with $C_i$. Finally, fix the coloring of the
bases of the rays of $U^n_{p,q,d}$ as given in the following paragraph.
(Recall that every base is isomorphic to $K_{p/q}$, and that for even
values of $d$, the bases of all the rays are actually the same.)

Let $f_e$ and $f_o$ be the functions defined in
Proposition~\ref{prop-pseudorandom} for the precoloring $C_i$. If $d$ is
even, then the base of each ray of $U^n_{p,q,d}$ is colored using the sets
$f_e(1),f_e(2),\dots,f_e(p)$. If $d\equiv 1 \mod 4$, then for each
$i\in\Bigl[n\twvec{p\\q}\Bigr]$, the base of $R_i$ is colored using
(arbitrarily chosen) sets
$f^i_o(1) \subset f_o(1), f^i_o(2) \subset f_o(2), \dots, f^i_o(p) \subset
f_o(p)$ of measure $1/q$ satisfying $\|f^i_o(j)\cap C_i\|=1/p$ for every
$j\in[p]$. Finally, if $d\equiv 3\mod 4$, then the base of the ray $R_i$ is
colored by sets
$f^i_o(1) \subset f_o(1), f^i_o(2) \subset f_o(2), \dots, f^i_o(p) \subset
f_o(p)$ of measure $1/q$ satisfying $\|f^i_o(j)\cap C_i\|=(1-\eps)/p$ for
every $j\in[p]$.

Observe that the precoloring of the special vertices and the coloring of
the bases of $U^n_{p,q,d}$ can be extended to a fractional
$(p/q+\eps)$-coloring if and only if we can extend this precoloring inside
each ray $R_i$ separately. Furthermore, the question if we can extend this
precoloring inside $R_i$ can be formulated as a linear program similar to
the program~$P$ defined in Theorem~\ref{thm-d6k25}. With the help of the
QSopt Linear Programming Solver~\cite{bib-qsopt}, we have checked the
minimum possible values of $\eps$ for various choices of integers $p$, $q$
and $d$. All the numerical values matched the values we conjectured above;
see Table~\ref{tab-linearprogram}.

Note that the assumption on the coloring of the bases of the rays we made
is satisfied in both the optimal extension for $d=4$ and $k\in [2,3)$ from
Theorem~\ref{thmain-d4}, and the optimal extension for $d=6$ and
$k \in [2.5,3)$ from Theorem~\ref{thmain-d6k25}. This is also the case for
the optimal extensions for $d=3$ and $k\in [2,\infty)$, and 
$d \ge 4$ and $k\in\{2\}\cup[3,\infty)$, which were constructed in~\cite{bib-extenze}.

\begin{table}
\begin{center}
\begin{tabular}{ccccccc}
\toprule
$k$ & $p$ & $q$ & $\eps$ for $d=5$ & $\eps$ for $d=6$ & $\eps$ for $d=7$ & $\eps$ for $d=8$\\
\midrule
$2.07692$ & $27$ & $13$ & $0.48148$ & $0.37822$ & --        & --\\
\midrule
$2.08333$ & $25$ & $12$ & $0.48$    & $0.37542$ & --        & --\\
\midrule
$2.09091$ & $23$ & $11$ & $0.47826$ & $0.37216$ & --        & --\\
\midrule
$2.1$     & $21$ & $10$ & $0.47619$ & $0.36831$ & --        & $0.24$\\
\midrule
$2.11111$ & $19$ & $9$  & $0.47368$ & $0.36367$ & $0.29889$ & --\\
\midrule
$2.125$   & $17$ & $8$  & $0.47059$ & $0.358$   & $0.29493$ & --\\
\midrule
$2.14286$ & $15$ & $7$  & $0.46667$ & $0.35088$ & $0.28994$ & $0.22427$\\
\midrule
$2.16667$ & $13$ & $6$  & $0.46154$ & $0.34171$ & $0.28346$ & $0.21616$\\
\midrule
$2.2$     & $11$ & $5$  & $0.45454$ & $0.32945$ & $0.27472$ & $0.20538$\\
\midrule
$2.25$    & $9$ & $4$   & $0.44444$ & $0.31223$ & $0.26229$ & $0.19035$\\
\midrule
$2.28571$ & $16$ & $7$  & $0.4375$  & $0.30071$ & --        & --\\
\midrule
$2.33333$ & $7$ & $3$   & $0.42857$ & $0.2863$  & $0.24324$ & $0.20657$\\
\midrule
$2.33333$ & $14$ & $6$  & $0.42857$ & $0.2863$  & --        & --\\
\midrule
$2.4$     & $12$ & $5$  & $0.41667$ & $0.26775$ & $0.22936$ & --\\
\midrule
$2.5$     & $5$ & $2$  & $0.4$      & --        & $0.28571$ & $0.23892$\\
\midrule
$2.5$     & $10$ & $4$ & $0.4$      & --        & $0.28571$ & $0.23892$\\
\midrule
$2.66667$ & $8$  & $3$ & $0.375$    & --        & $0.27273$ & $0.23274$\\
\midrule
$2.75$    & $11$ & $4$ & $0.36364$  & --        & $0.26667$ & --\\
\bottomrule
\end{tabular}
\end{center}
\caption{Minimum possible values of $\eps$ for several choices of $p$,
  $q$ and $d$, obtained by~numerical computations.}
\label{tab-linearprogram}
\end{table}

Finally, we also believe that as $d$ gets larger, the function $g(k,d)$ is
discontinuous for more values of $k$. In particular, let $m(p,q,d)$ be the
maximum integer $i$ such that $i \le \lfloor d/2\rfloor -1$ and the $i$-th
neighborhood of the special vertex of $R_{p,q,\lfloor d/2\rfloor}$ form an
independent set. Observe that for any integers $d$, $p$, $q$, $p'$ and $q'$
such that $p/q = p'/q'$, the values of $m(p,q,d)$ and $m(p',q',d)$ are the
same. We expect that the discontinuous points of $g(k,d)$ exactly
correspond to those values of $k=p/q$, where $m(p,q,d)$ changes from a
value $\ell$ to $\ell+1$. Therefore, we pose the following conjecture.

\begin{conjecture}
  For a fixed integer $d\ge4$, the function $g(k,d)$ is discontinuous at
  $k\in[2,\infty)$ if and only if $k=2+1/m$ with
  $m\in\{1,2,\ldots,\lfloor d/2\rfloor-1\}$.
\end{conjecture}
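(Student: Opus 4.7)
The plan is to tie the critical points of $g(k,d)$ to the jumps of the function $m(p,q,d)$, as heuristically suggested in the paper. Using the odd girth formula for Kneser graphs cited in the paper, $\lceil q/(p-2q)\rceil$ equals $m$ whenever $k=p/q$ lies in $[2+1/m,\,2+1/(m-1))$ and jumps to $m+1$ when $k$ descends into $(2+1/(m+1),\,2+1/m)$. Consequently, $m(p,q,d)=\min\bigl(\lceil q/(p-2q)\rceil-1,\lfloor d/2\rfloor-1\bigr)$ jumps at $k=2+1/m$ precisely when $m\le\lfloor d/2\rfloor-1$, matching the critical set in the conjecture. The task is then to show that $g(k,d)$ is continuous on each open interval between consecutive critical points, and strictly larger at each endpoint of the form $2+1/m$ than on the adjacent interval immediately below.

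For the ``if'' direction (discontinuity at $k=2+1/m$), I would generalize the lower-bound arguments of Theorems~\ref{thm-lb-d4} and~\ref{thm-lb-d6k25}. Fix $k=2+1/m$ with $m\le\lfloor d/2\rfloor-1$ and consider $G=U^n_{p,q,d}$ with the precoloring of the special vertices by all translates of the equipartition $f(i)=[(i-1)/q,i/q)$. A double-counting argument in the style of~\eqref{ineq-d6k25-doublecount} should then select a ray in which the fractional coloring restricted to the rings at distance $1,\dots,m$ from the special vertex must be highly correlated with the precoloring. Constructing a feasible dual solution to the ray-coloring LP then reduces to exhibiting a fractional clique in the ray supported on the first $m$ rings, generalizing Proposition~\ref{prop-fclique}. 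Crucially, at $k=2+1/m$ the Kneser graph $K_{p/q}$ has odd girth exactly $2m+1$, so vertices at distance $m$ from the special vertex are no longer pairwise non-adjacent, and this creates extra feasible clique weight that is simply unavailable in the regime $k\in(2+1/(m+1),\,2+1/m)$. This extra weight should translate into a strict gap between $g(k,d)$ at $2+1/m$ and its limit from below.

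For the ``only if'' direction (continuity on each open interval), I would establish matching upper and lower bounds that vary continuously in $k$. On the upper bound side this requires generalizing Lemmas~\ref{lemma-d0}--\ref{lemma-d3} and Theorems~\ref{thm-d6k25}, \ref{thm-d7k25} into one family of coloring strategies parametrized by $m=m(p,q,d)$ (constant on each interval) and the residue of $d$ modulo four: the construction allocates the interval $[0,k+\eps)$ into geometrically decreasing chunks associated with the $\ell$-th neighborhoods for $\ell=1,\dots,m$, and yields a closed-form upper bound that is continuous in $k$. The matching lower bound would come from rerunning the LP-duality machinery of Theorem~\ref{thm-lb-d6k25} with a fractional clique whose support depends only on $m$, so that its weight varies continuously with $k$. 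Combined with Proposition~\ref{prop-monotone} to propagate infima, this should give continuity of $g(k,d)$ on each such open interval.

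The main obstacle is producing, for each $m\ge3$, the fractional clique in the Kneser graph needed for the lower bound at $k=2+1/m$. Proposition~\ref{prop-fclique} provides such a clique only for $m=2$ and $p/q\in[2.5,3)$, and its proof relies on a delicate case analysis via circular arcs in $[p]\setminus[q]$ that does not obviously scale. A promising alternative is to average an integral clique-like structure over the natural $S_p$-action on $K_{p/q}$, thereby reducing the weight-on-independent-sets constraint to a sharpened Erd\H{o}s-Ko-Rado-type inequality; but formulating and proving the correct extremal statement looks like substantial new work. A parallel but less severe obstacle is on the upper bound side, where describing the coloring strategy uniformly in $m$ requires nontrivial bookkeeping to extend the patterns of~Lemmas~\ref{lemma-d0}--\ref{lemma-d3}, though those lemmas (together with Theorems~\ref{thm-d6k25} and~\ref{thm-d7k25}) strongly indicate the general recursive shape.
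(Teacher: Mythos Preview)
The statement you are attempting to prove is a \emph{Conjecture} in the paper, not a theorem: the paper offers no proof whatsoever, only heuristic motivation via the function $m(p,q,d)$ and numerical evidence from linear-programming experiments (Table~\ref{tab-linearprogram}). There is therefore nothing in the paper to compare your proposal against.

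What you have written is not a proof but an honest research outline, and you correctly identify the central obstruction yourself: for the lower bound at $k=2+1/m$ with $m\ge3$ you need an analogue of Proposition~\ref{prop-fclique}, and no such analogue is known. The paper's proof of Proposition~\ref{prop-fclique} is already a delicate case analysis specific to $m=2$ (sets meeting $[q]$ in exactly $q/2$ elements, circular-arc arguments on $[p]\setminus[q]$), and the authors give no indication of how to extend it. Your suggestion to average over the $S_p$-action and reduce to an Erd\H{o}s--Ko--Rado-type inequality is plausible but speculative; until that inequality is formulated and proved, the ``if'' direction remains open. On the upper-bound side, even the paper's own bounds (Theorems~\ref{thm-d0}, \ref{thm-d2}, \ref{thm-d1}, \ref{thm-d3}) are only shown to be tight for $d=4$ and for $d=6$ with $k\in[2.5,3)$; for all other cases their tightness is itself conjectural (Conjectures~\ref{conj-d5}--\ref{conj-d7}). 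So your continuity argument on each open interval presupposes matching lower bounds that the paper does not establish.

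In short: your proposal accurately maps the terrain and isolates the right difficulties, but it does not close the gap, and neither does the paper.
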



\end{document}